\documentclass[reqno]{amsart}
\usepackage{amssymb}
\usepackage[mathscr]{eucal}
\usepackage{hyperref}
\usepackage{epstopdf}

\usepackage[all]{xy}
\usepackage{color}
\usepackage[margin=4cm]{geometry}

\setlength{\parskip}{0.3em}

\usepackage{tikz}
\usetikzlibrary{cd} 
\usetikzlibrary{babel} 

\theoremstyle{plain}
\newtheorem{prop}{Proposition}[section]
\newtheorem{thm}[prop]{Theorem}
\newtheorem{cor}[prop]{Corollary}
\newtheorem{lem}[prop]{Lemma}

\theoremstyle{definition}
\newtheorem{dfn}[prop]{Definition}
\newtheorem{rem}[prop]{Remark}
\newtheorem{example}[prop]{Example}

\renewcommand{\iff}{\Leftrightarrow}

\newcommand{\A}{{\mathbb{A}}}

\newcommand{\R}{{\mathbb{R}}}
\newcommand{\Z}{{\mathbb{Z}}}
\newcommand{\T}{\mathbb{T}}
\newcommand{\cM}{\mathcal{M}}

\newcommand{\abs}{\vert \,.\, \vert}

\newcommand{\scrF}{{\mathscr{F}}}

\newcommand{\Hscr}{\mathscr{H}}

\DeclareTextFontCommand{\textnf}{\normalfont}

\DeclareMathOperator{\Hom}{Hom}
\DeclareMathOperator{\im}{Im}

\DeclareMathOperator{\Spec}{Spec}
\DeclareMathOperator{\Sper}{Sper}

\DeclareMathOperator{\supp}{supp}
\DeclareMathOperator{\Trop}{Trop}

\DeclareMathOperator{\an}{an}
\DeclareMathOperator{\trop}{trop}

\DeclareMathOperator{\val}{\nu}
\DeclareMathOperator{\Int}{int}

\newcommand{\sgn}{{\rm sgn}}
\newcommand{\Xan}{X^{\an}}
\newcommand{\Xanpm}{X_r^{\an}}

\newcommand{\Sanpm}{S_r^{\an}}
\newcommand{\Khat}{\hat K}

\renewcommand{\emptyset}{\varnothing}
\renewcommand{\setminus}{\smallsetminus}
\newcommand{\ol}{\overline}

\newcommand{\To}{\Rightarrow}

\renewcommand{\subset}{\subseteq}
\renewcommand{\supset}{\supseteq}

\newcommand{\interior}[1]{\Int\left( #1\right)}

\newcommand{\Label}[1]{\label{#1}}

\newcommand{\marginnote}[1]{\vrule width0pt height0pt depth0pt
    \vadjust{\vbox to0pt{\vss\hbox to\hsize{\hskip\hsize\quad
    #1\hss}\vskip1.5pt}}}

\newcommand{\arch}{\mathrm{arch}}
\newcommand{\mx}{\mathrm{max}}


\begin{document}

\title{Real tropicalization and analytification of semialgebraic sets}

\author{Philipp Jell}
\address{Universit\"at Regensburg, 93040 Regensburg, Germany}
\email{philipp.jell@ur.de}

\author{Claus Scheiderer}
\address{Fachbereich Mathematik und Statistik, Universit\"at Konstanz,
78457 Konstanz, Germany}
\email{claus.scheiderer@uni-konstanz.de}

\author{ Josephine Yu}
\address{School of Mathematics, Georgia Tech, Atlanta GA 30332, USA}
\email{jyu@math.gatech.edu}
\thanks{PJ was supported by the DFG Research Fellowship  JE 856/1-1,
CS was supported in part by DFG grant SCHE281/10-2, and
JY was supported in part by NSF grant DMS-1600569.}

\subjclass[2010]
{Primary 14P10,
secondary 14T05, 14G22, 32P05}


\begin{abstract}
Let $K$ be a real closed field with a nontrivial non-archimedean
absolute value.
We study a refined version of the tropicalization map, which we call
real tropicalization map, that takes into account the signs on $K$.
We study images of semialgebraic subsets of $K^n$ under this map
from a general point of view.
For a semialgebraic set $S \subset K^n$ we define a space $\Sanpm$
called the real analytification,
which we show to be homeomorphic to the inverse limit of all real
tropicalizations of $S$.  We prove a real analogue of the tropical
fundamental theorem and show that the tropicalization of any semialgebraic
set is described by tropicalization of finitely many inequalities
which are valid on the semialgebraic set. We also study the topological
properties of real analytification and tropicalization.
If $X$ is an algebraic variety, we show that  $\Xanpm$ can be
canonically embedded into the real spectrum $X_r$ of $X$, and we study its relation with the Berkovich analytification of $X$.
\end{abstract}

\maketitle


\section{Introduction}

Let $K$ be an algebraically closed field which is equipped with a non-archimedean non-trivial absolute value $\abs$.
The map
\begin{align}
\trop \colon K^n \to (\R \cup \{-\infty\})^n,\quad
(x_1,\dots,x_n) \mapsto (\log \vert x_1 \vert, \dots, \log \vert x_n \vert)
\end{align}
is called the tropicalization map.
For a closed algebraic subvariety $X$ of $K^n$, the associated tropical variety $\Trop(X)$
is defined as the closure in $(\R \cup \{-\infty\})^n$ of $\trop(X)$.
In good situations, this tropical variety contains information about $X$, such as
its dimension, degree, Chow cohomology class~\cite[\S
6.7]{TropicalBook} or the valuation of the $j$-invariant when $X$ is
an elliptic curve~\cite{KMM}.

Given a variety $X$ over $K$, Berkovich defines the analytification $\Xan$ of $X$,
which is a connected Hausdorff topological space  that contains
$X(K)$ as a dense subset~\cite{Berkovich}.
If $X$ is a closed subvariety of $\A^n$, the tropicalization map above naturally extends to $\Xan$.
The fundamental theorem of tropical geometry states that $\Trop(X) = \trop(\Xan)$ and that $\Trop(X)$ can  be characterized by tropicalizing
the polynomials that vanish on $X$ (see \cite[Theorem 4.2]{Draisma}, \cite{EKL}, \cite[\S 3.2]{TropicalBook}).
Given an affine variety over $K$, Payne shows that  $\Xan$ is homeomorphic to the inverse limit of tropicalizations
of all embeddings of $X$ into affine spaces~\cite[Theorem 1.1]{p}.

In this paper, we consider a similar setup, but with the following
modifications and generalizations:
\begin{enumerate}
\item
$K$ is real closed instead of algebraically closed;
\item
the tropicalization takes the order on $K$ into account;
\item
$X$ does not need to be an algebraic variety but may be only a semialgebraic set.
\end{enumerate}
Our goal is to define a space $\Xanpm$, which we call \emph{real
  analytification} of $X$,
and prove analogues of the fundamental theorem and the limit theorem for it.

For an algebraic variety we define $\Xanpm$ in Definition~\ref{Xanpm} and for a semialgebraic set $S \subset K^n$,
we define $\Sanpm$ in Definition~\ref{dfnmanpm}.
We prove an analogue of the tropical fundamental theorem in Theorem~\ref{thm:fund} and
the analogue of Payne's theorem in Theorem~\ref{signed limit}.

Let us provide some more details.
Let $K$ be a real closed field with a nontrivial
non-archimedean absolute value $\abs_K$ that is compatible with the order on $K$.
An example is the field of real Puiseux series
$\R \{\!\{t \}\!\} = \bigcup_{n \in \mathbb{N}_{>0}} \R((t^{1/n}))$,
where the positive Puiseux series are those with positive leading coefficients.
The real tropicalization map is defined as
\begin{align*}
\trop_r \colon K^n \to \R^n,\quad
(x_1,\dots,x_n) \mapsto (\sgn(x_1) \vert x_1 \vert_K,\dots, \sgn(x_n) \vert x_n \vert_K).
\end{align*}
For a semialgebraic set $S \subset K^n$, the real tropicalization $\Trop_r(S)$ is defined to be
the image of $S$ under $\trop_r$ after extending scalars to a field
with value group $\R$.
The logarithmic version of this construction, without signs, was first
used by Alessandrini in~\cite{Ale}.  For polytopes this was done
earlier in~\cite{DevelinYu}.  We do not know of general
algorithms for computing tropicalizations of real varieties or
semialgebraic sets, but there are some descriptions of tropicalized
spectrahedra~\cite{AGS} and numerical methods in the case of
curves defined over $\R$~\cite{BHV}. 

Since we do not want to restrict to a fixed orthant, we choose to omit the logarithm to
simplify notation and to make the topology more apparent.
This means that our $\Trop_r(S)$ is not piecewise linear,
but it is more meaningful topologically since we are taking into account all
orthants, similarly to Viro's patchworking for hypersurfaces~\cite{Viro_curves}.
For example, Celaya showed recently that the real tropicalization
of a tropical linear space defined over $\R$, and more generally the
real Bergman fan of an oriented matroid,
is the cone over a piecewise linear topological sphere in
$\R^n$~\cite{Celaya}. We will see in Corollary~\ref{cor:tropConnected} that if $S$ is semialgebraically
connected, then its real tropicalization is connected.

For an affine $K$-variety $X=\Spec(A)$, the Berkovich analytification
$X^{\an}$ of $X$ consists of the $K$-seminorms on $A$, and the real
spectrum $X_r$ consists of the orderings on~$A$. We will review these
notions in more detail in~\S\ref{sec:background}. In \S\ref{sec:Xanpm}
we define the real analytification $\Xanpm$ as a topological space consisting of {\em signed seminorms} on the coordinate ring.
We also introduce $\Xanpm$ for non-affine varieties.
We study the topological properties of $\Xanpm$ and its canonical maps
$\Xanpm \to \Xan$ and $\Xanpm \to X_r$ in \S \ref{sec:Xanpm}.
In comparison with the real spectrum $X_r$, we will see that the
real analytification $\Xanpm$ is in fact homeomorphic to a
subspace of $X_r$ which can be described entirely in terms of orders,
consisting of the so-called relatively archimedean points.

In \S\ref{sec:SA} we study the real analytification $\Sanpm$ for a
semialgebraic set~$S$.
In \S\ref{sec:hyperfields} we explain how to tropicalize polynomial inequalities.
To do this, we take a quick excursion into the realm of hyperfields.
Hyperfields are algebraic structures similar to fields, but which allow multivalued addition.
Jun shows in \cite{Jun} that both the Berkovich space $\Xan$ and the real spectrum $X_r$ of a variety
can be seen as the set of points of $X$ over the {\em tropical hyperfield}
and the {\em sign hyperfield} respectively.
We recall his results and show that $\Xanpm$ can similarly be seen as
the set of points of $X$ over the {\em tropical real hyperfield}.
The results by Jun provided some of our inspiration for the definition of $\Xanpm$
and the tropicalization of polynomial inequalities.

In \S\ref{sec:limit} we turn our attention to the real tropicalization map
and prove our two main theorems \ref{thm:fund} and \ref{signed limit} mentioned above.
An important step is Theorem \ref{strong density}, which shows that
any point in $\trop_r(\Sanpm)$ that satisfies an obviously necessary condition
is already in $\trop_r(S)$.  We also prove the existence of a finite
tropical description for semialgebraic sets in each orthant.


\section{Background}
\Label{sec:background}

For a commutative ring $A$, a  subset $P \subset A$ is called an {\em ordering}
of $A$ if $P+P \subset P$, $P \cdot P \subset P$,
$P \cup -P = A$, and $P \cap -P$ is a prime ideal of~$A$. The
{\em support} of $P$ is $\supp(P) := P \cap -P$.  With $P$ we
associate an order relation and a sign function in the natural way:
Write $f>_P0$ and $\sgn_P(f)=+1$ if $-f\notin P$, write $f\ge_P0$ if
$f\in P$ and $\sgn_P(f)=0$ if $f\in\supp(P)$. We can define $<_P$ and
$\le_P$ in the obvious way, and we have $\sgn_P(-f)= -\sgn_P(f)$. We may
refer to the ordering as either $P$ or~$<_P$.

The {\em real spectrum} $\Sper(A)$ of a commutative ring $A$ is the set of
orderings of $A$, with the {\em Harrison topology} given by the subbasis of
open sets $\{P \in\Sper(A) : f >_P 0\}$ for $f \in A$.  See \cite{bcr}
or \cite{ks}, for example. For an affine scheme $X = \Spec(A)$, the
{\em real spectrum} $X_r$ is defined to be $\Sper(A)$. The real
spectrum $X_r$ of an arbitrary scheme $X$ is defined by glueing the
real spectra of open affine subspaces.  A point in $X_r$ is a pair
$(p,P)$ where $p$ is a scheme point of $X$ and $P$ is an ordering
of the residue field $K(p)$ of~$p$.

For example, for $X = \A^1_\R$, the orderings on $A = \R[T]$ are
classified as follows (see, for example,~\cite{bcr} Example 7.1.4):
\begin{align*}
 P_a &= \{f(T) \in \R[T]: f(a) \geq 0\}, \text{ for } a \in \R\\
  P_{a^+} &= \{f(T) \in \R[T]: f \geq 0 \text{ on some interval
  } [a,a+\varepsilon], \varepsilon > 0 \} , \text{ for } a \in \R\\
 P_{a^-} &= \{f(T) \in \R[T]: f \geq 0 \text{ on some interval
  } [a-\varepsilon,a], \varepsilon > 0 \} , \text{ for } a \in \R\\
 P_\infty &= \{f(T) \in \R[T]: f\ge0\text{ on some interval }
  [b,\infty[\text{ with }b\in\R\}\\
 P_{-\infty} &= \{f(T) \in \R[T]: f\ge0\text{ on some interval }
  \left]-\infty,b\right]\text{ with }b\in\R\}
\end{align*}

For $P_a$, the support is $\langle T-a \rangle$, and the residue field
is $\R$, which has a unique order.  The other four types have
support $\{0\}$ with residue field $\R(T)$.  The real spectrum
$\Sper\R(T)$ is identified with the subspace of $\Sper\R[T]$ consisting
of orderings of the form $P_{a^+}, P_{a^-}, P_{\infty}$ and $P_{-\infty}$.
We can think of $P_{a^+}$ (resp.\ $P_{a^-}$)  as orders in which $T$ is
infinitesimally larger (resp.\ smaller) than $a$.
The points of the form $P_a$, $P_\infty$,  $P_{-\infty}$ are closed, while
$\ol{\{P_{a^+}\}} = \{P_{a^+},\,P_a\}$ and $\ol{\{P_{a^-}\}} =
\{P_{a^-},P_a\}$.

Let $K$ denote a real closed field with a nontrivial
non-archimedean valuation $\val : K^* \rightarrow \R$, giving a
non-archimedean absolute value $|a|_K=|a|= e^{-\val(a)}$ ($a\in K^*$),
$|0|=0$.  Being real closed, $K$ has a unique ordering. We will always
assume that the absolute value is compatible with this ordering:
$0\le a\le b$ $\To$ $|a|_K\le|b|_K$, or equivalently, $|a|_K>|b|_K$
$\To$ $\sgn(a+b)=\sgn(a)=\sgn(a-b)$.  An example is the Puiseux
series field $K = \R\{\!\{t\}\!\} =\bigcup_{n \geq 0} \R((t^\frac{1}{n}))$.
We will equip $K$ with the topology induced by the order.
The compatibility ensures that this topology is coarser then the one induced
by the non-archimedean absolute value and both topologies agree if the absolute value is non-trivial.

For a $K$-algebra $A$,  a \emph{(non-archimedean multiplicative) $K$-seminorm} on $A$ is a map
$\abs_x \colon A \rightarrow \R_{\ge0}$ such that $|a|_x = |a|_K$ for
$a\in K$ and $|fg|_x = |f|_x \cdot |g|_x$, and
$|f+g|_x \le\max(|f|_x,|g|_x)$ ($f,g\in A$).
For this text we only work with non-archimedean multiplicative $K$-seminorms and simply write  ``$K$-seminorm''.
The {\em support}
of the seminorm $\abs_x$ is the prime ideal $\supp(\abs_x)= \{f \in
A : |f|_x = 0\}$.  An absolute value is a seminorm whose support
is $\{0\}$.

The {\em Berkovich spectrum} $\cM(A)$ is the set of $K$-seminorms
on $A$ with the coarsest topology that makes the map $\abs_x \mapsto
|f|_x$ continuous for every $f \in A$.  See~\cite{Berkovich}.
A subbasis of open sets is given by the $\{\abs_x \in \cM(A) \colon
r < |f|_x <s\}$ where $r<s$ are real numbers and $f \in A$.
For an affine $K$-scheme $X = \Spec(A)$, the analytification
$X^{\an}$ is defined to be $\cM(A)$.  The analytification $X^{\an}$
of an arbitrary $K$-scheme $X$ is defined by glueing the spectra of
open affine subschemes.  A point in $X^{\an}$ is a tuple $(p, \abs)$
where $p$ is a point in the scheme $X$ and $\abs$ is an absolute
value on the residue field $K(p)$ which is compatible with
$\abs_K$ on $K$.  See \cite{Baker} for a visualization of
$\A^{1,{\an}}$, which will be discussed in Example~\ref{ex:line}.

Given any non-archimedean valued field $(L,\abs_L)$, we denote by
$k_L$ the residue field of the valuation ring of $L$ and by
$\ol f\in k_L$ the reduction of $f \in L$ with $\vert f\vert\le1$.
We also denote by $\Gamma_L := \vert L^* \vert_L$ the value group
of $L$, a multiplicative subgroup of $\R_{>0}$.
Let $P$ be an ordering of $L$ that is compatible with $\abs_L$. Then
$P$ induces an ordering $\ol P$ of the residue field $k_L$ by
$$\ol P\>=\>\{\ol a\colon a\in P,\ |a|_L\le1\}.$$
Conversely, the Baer-Krull theorem tells which orderings $P$ induce
the same ordering $\ol P$ on $k_L$:

\begin{thm}[Baer-Krull Theorem] \Label{Baer-Krull-Theorem}
Let $(L,\abs_L)$ be a non-archimedean valued field with value
group $\Gamma_L$. Let $(f_i)_{i \in I}$ be a family of elements in
$L^*$ such that $(\vert f_i \vert_L)_{i \in I}$ is a $\Z/2\Z$-basis
of $\Gamma_L / 2\Gamma_L$. Given any ordering $Q$ of $k_L$ and any
tuple $(\epsilon_i)_{i\in I}$ in $\{\pm1\}^I$, there is a unique
ordering $P$ of $L$ that is compatible with $\abs_L$ and satisfies
$\ol P=Q$ and $\sgn_P(f_i)=\epsilon_i$ ($i\in I$). In short, the map
\begin{align*}
\{ P\in\Sper(L)\colon P\text{ is compatible with } \abs_L \}
  & \ \to \{-1, 1\}^I \times \Sper(k_L),  \\
P & \ \mapsto \bigl((\sgn_P(f_i))_{i\in I},\>\ol P\bigr)
\end{align*}
is a bijection.
\end{thm}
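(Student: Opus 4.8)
The plan is to prove the Baer--Krull theorem by reducing to a statement about reduced special groups or, more concretely, by a transfinite/Zorn-type construction of the ordering $P$ from the data $(Q,(\epsilon_i)_i)$. The main point is that every element of $L^*$ can be written, after choosing representatives $f_i$ whose values form a $\Z/2$-basis of $\Gamma_L/2\Gamma_L$, as $a = u\cdot \prod_{i\in J} f_i$ for a finite subset $J\subset I$ and a unit $u$ of the valuation ring with $\ol u\ne0$; this decomposition is unique up to multiplying $u$ by a square unit, because the $\vert f_i\vert_L$ are $\Z/2$-independent modulo squares of the value group. One then \emph{defines} the candidate positive cone by declaring $\sgn_P(a) := \sgn_Q(\ol u)\cdot\prod_{i\in J}\epsilon_i$ and $\sgn_P(0)=0$, and setting $P := \{a : \sgn_P(a)\ge 0\}$.

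The key steps, in order, are: (1) fix a section of $L^*\to\Gamma_L/2\Gamma_L$ through the $f_i$, and verify the unique multiplicative decomposition above, so that $\sgn_P$ is a well-defined multiplicative map $L^*\to\{\pm1\}$ extending $\sgn_Q$ on residues and realizing the prescribed signs $\epsilon_i$; (2) check $P$ is additively closed — this is where the non-archimedean hypothesis does the work: if $\vert a\vert_L\ne\vert b\vert_L$, say $\vert a\vert_L>\vert b\vert_L$, then $\sgn_P(a+b)=\sgn_P(a)$ because $a+b=a(1+b/a)$ with $\vert b/a\vert_L<1$, so $\overline{1+b/a}=1$ and the residue sign is unchanged; if $\vert a\vert_L=\vert b\vert_L$ with $\sgn_P(a)=\sgn_P(b)$, write $b=a(b/a)$ with $\vert b/a\vert_L=1$ and $\sgn_Q(\overline{b/a})=+1$, so $\overline{1+b/a}=1+\overline{b/a}\ge_Q 0$, giving $\sgn_P(a+b)=\sgn_P(a)$ unless $\overline{1+b/a}=0$, in which case $\vert a+b\vert_L<\vert a\vert_L$ and one iterates (using that $Q$ is an ordering of a field, so the problematic case drops the value strictly and the argument terminates, or more cleanly, passes to the completion); (3) conclude $P$ is an ordering compatible with $\abs_L$ with $\ol P=Q$; (4) for injectivity/uniqueness, observe that any compatible ordering $P$ is determined by $\ol P$ together with the finitely-supported sign data on any multiplicative set of representatives of $\Gamma_L/2\Gamma_L$, since $\sgn_P$ is multiplicative, $\sgn_P$ of a square is $+1$, and $\sgn_P$ is constant on residue-$1$ units with residue sign $+1$ by the additivity argument in (2); hence $P$ is recovered from $\bigl((\sgn_P(f_i))_i,\ol P\bigr)$, proving the displayed map is a bijection.

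I expect the main obstacle to be step (2), the additive closure, specifically the borderline case $\vert a\vert_L=\vert b\vert_L$ where cancellation can occur in the residue field: one must ensure the process of "strip off the leading term, reduce the value, repeat" actually terminates or is justified by passing to a maximally complete (or Hensel-complete) extension where the argument closes in one pass. A clean way around this is to first prove the theorem for $L$ maximally complete — where the decomposition $a=u\prod f_i$ combined with a Hahn-series style support argument makes additivity immediate — and then descend: any ordering of $L$ compatible with $\abs_L$ extends uniquely to one on a maximal immediate extension with the same residue field and value group, so the bijection for the big field restricts to the bijection for $L$. The remaining verifications (multiplicativity, $P\cup-P=L$, $\supp P=\{0\}$, $\ol P=Q$) are then routine and I would only sketch them.
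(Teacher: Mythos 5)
The paper itself gives no proof of this theorem: it is quoted as classical background (it is proved in the real algebra references the paper cites, e.g.\ Knebusch--Scheiderer), and your construction is exactly that standard proof, so I will judge it on its own terms. Two concrete points need repair. First, your ``main point'' is misstated: an element $a\in L^*$ cannot in general be written as $a=u\prod_{i\in J}f_i$ with $u$ a unit (take $a=f_1^3$); the correct decomposition is $a=u\,c^2\prod_{i\in J}f_i$ with $u$ a unit, $c\in L^*$, and $J$ the unique finite subset with $|a|_L\equiv\prod_{i\in J}|f_i|_L$ modulo $2\Gamma_L$. Since your definition $\sgn_P(a):=\sgn_Q(\overline{u})\cdot\prod_{i\in J}\epsilon_i$ only uses data modulo squares, the fix is harmless, but as written the decomposition is false, and the well-definedness check must be run with the square factor present: $u$ is then determined up to the square of a unit, whose residue is a nonzero square in $k_L$ and hence $Q$-positive.

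Second, the obstacle you anticipate in step (2) does not exist, so neither the iteration nor the passage to a maximally complete field is needed. In verifying $P+P\subset P$ you only ever add nonzero $a,b$ with $\sgn_P(a)=\sgn_P(b)=+1$; in the borderline case $|a|_L=|b|_L$ this forces $\sgn_Q(\overline{b/a})=+1$, hence $\overline{1+b/a}=1+\overline{b/a}>_Q0$, so the cancellation $\overline{1+b/a}=0$ cannot occur and the argument closes in one pass (cancellation only threatens when the two signs differ, which is irrelevant for additive closure of $P$). Moreover the proposed escape route is risky: the claim that a compatible ordering of $L$ extends uniquely to a maximal immediate extension is itself essentially an instance of Baer--Krull (same value group and residue field), so invoking it here is circular unless proved independently. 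With these two corrections, your steps (1)--(4) --- multiplicativity of $\sgn_P$, additive closure, $1+\mathfrak{m}_L\subset P$ giving compatibility, $\overline{P}=Q$, and uniqueness via $\sgn_P(u)=\sgn_Q(\overline{u})$ for units $u$ with nonzero residue --- do constitute a complete proof along the standard lines.
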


It can easily be shown that an order on a non-archimedean valued
field $L$ that is compatible with $\abs_L$ extends in a unique way
to the completion $\hat L$ of $L$.

\begin{dfn}
Let $X = \Spec(A)$ be an affine $K$-variety. 
A semialgebraic set $S \subset X(K)$ is a finite boolean combination
of sets of the form
$\{x \in X(K) : f(x) > 0\}$, where $f \in A$.
An explicit expression for $S$ in this way is called
\emph{semialgebraic description} of $S$.
\end{dfn}


\section{The Real Analytification \texorpdfstring{$\Xanpm$}{Xran}}
\Label{sec:Xanpm}

Let $K$ be a real closed field with an order-compatible non-archimedean
absolute value.
We will assume that this absolute value is non-trivial in Sections \ref{Relation with the real spectrum} and \ref{Paths}.
Let $X$ be a  variety over $K$, by which we always mean reduced, irreducible, separated scheme of finite type.  For a
point $p$ in the scheme $X$, let $K(p)$ denote its residue field.

\begin{dfn}\Label{Xanpm}
The {\em real analytification}
of $X$ is the set $\Xanpm$ consisting of all triples $x=(p_x,\,\abs_x,\,<_x)$ where $p_x \in X$, $\abs_x$ is an absolute value on
$K(p_x)$ extending $\abs_K$, and $<_x$ is an order on $K(p_x)$
compatible with $\abs_x$.
We equip $\Xanpm$ with the coarsest topology such that the
support map
\[\Xanpm \to X,\quad x \mapsto\supp(x):=p_x\]
is continuous and the map
$$\supp^{-1}(U) \to \R,\quad x \mapsto \sgn_{x} (f) \cdot \vert f
  \vert_x\ =: \vert f \vert^\sgn_x $$
is continuous for every open $U \subset X$ and every regular functions $f$ on $U$.
\end{dfn}

\begin{rem}
Note that the real analytification $\Xanpm$ is functorial in $X$
in the obvious way.
\end{rem}

For an affine $K$-variety $X = \Spec(A)$, the real analytification
$\Xanpm$ has a description as the space of signed seminorms, which
combines the constructions of $\Sper(A)$ and $\cM(A)$, as we will
now see.

\begin{dfn} \Label{dfn signed seminorm}
Let $A$ be a $K$-algebra.
A \emph{signed $K$-seminorm} on $A$ is a map $\abs^\sgn \colon A \to \R$
satisfying
\begin{enumerate}
\item
$\vert f \vert^\sgn = \sgn(f) \cdot \vert f \vert_K$ if $f \in K$,
\item
$\vert f \cdot g \vert^\sgn = \vert f \vert^\sgn \cdot \vert g \vert^\sgn$,
\item
$\min(\vert f \vert^\sgn, \vert g \vert^\sgn)\>\le\>\vert f + g \vert^\sgn
\>\le\>\max(\vert f \vert^\sgn, \vert g \vert^\sgn)$
\end{enumerate}
for all $f,g\in A$. The {\em real Berkovich spectrum} $\cM_r(A)$
is the set of all signed $K$-seminorms on $A$, with the coarsest
topology that makes the map
\[ \cM_r(A) \rightarrow \R, \quad \abs^\sgn \mapsto \vert f \vert^\sgn\]
continuous for every $f$ in $A$.
\end{dfn}

\begin{prop}
Let $A$ be a finitely generated $K$-algebra and $X = \Spec(A)$.
Then
\begin{align*}
\Xanpm = \cM_r(A).
\end{align*}
\end{prop}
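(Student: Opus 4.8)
The plan is to exhibit a bijection between $\Xanpm$ and $\cM_r(A)$ and then show that it is a homeomorphism. First I would construct the map $\Xanpm \to \cM_r(A)$: given a triple $x = (p_x, \abs_x, <_x)$, the support $p_x \in \Spec(A)$ corresponds to a prime $\p = \supp(x)$, and $A/\p$ embeds into $K(p_x)$; I define $\vert f \vert^\sgn_x := \sgn_x(f) \cdot \vert f \vert_x$ where $f$ is first reduced modulo $\p$ and then viewed in $K(p_x)$ (so $\vert f \vert_x = 0$ and $\sgn_x(f) = 0$ exactly when $f \in \p$). One checks the three axioms of Definition~\ref{dfn signed seminorm}: axiom (1) is immediate since $\abs_x$ extends $\abs_K$ and $<_x$ restricts to the order of $K$; axiom (2) is multiplicativity of both $\abs_x$ and $\sgn_x$; and axiom (3) combines the ultrametric inequality for $\abs_x$ with the compatibility of $<_x$ and $\abs_x$ — concretely, if $\vert f \vert_x > \vert g \vert_x$ then $\sgn_x(f+g) = \sgn_x(f)$ and $\vert f+g \vert_x = \vert f \vert_x$, while if $\vert f \vert_x = \vert g \vert_x$ the value $\vert f+g\vert^\sgn_x$ lies in $[-\vert f\vert_x, \vert f\vert_x]$, which is contained in the required interval in all sign cases.

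Next I would construct the inverse. Given $\abs^\sgn \in \cM_r(A)$, set $\p := \{f : \vert f\vert^\sgn = 0\}$; axioms (2) and (3) show $\p$ is a prime ideal, hence a scheme point $p_x \in X$. On $A/\p$ the map $\abs^\sgn$ is a nonzero function, and I would argue it factors through an absolute value together with a sign function on the fraction field $K(p_x)$: define $\vert f/g \vert := \vert f\vert^\sgn / \vert g \vert^\sgn$ in absolute value and transport the sign multiplicatively; one needs that this is well-defined and extends to $K(p_x)$, using that taking $\vert \cdot \vert := |\abs^\sgn|$ gives an ordinary multiplicative non-archimedean seminorm on $A$ (the ultrametric inequality $|\abs^\sgn(f+g)| \le \max(|\abs^\sgn(f)|, |\abs^\sgn(g)|)$ follows from axiom (3)) and that $P := \{f : \vert f\vert^\sgn \ge 0\}$ is an ordering of $A$ with support $\p$, which by passing to the residue field yields the order $<_x$. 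Compatibility of $<_x$ with $\abs_x$ again comes from axiom (3). These two constructions are visibly mutually inverse.

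Finally I would check the topologies match. Both spaces carry the coarsest topology making all the evaluation maps $x \mapsto \vert f \vert^\sgn_x$ (for $f \in A$) continuous: on the $\cM_r(A)$ side this is the definition, and on the $\Xanpm$ side Definition~\ref{Xanpm} demands continuity of $x \mapsto \vert f\vert^\sgn_x$ for regular functions on open sets, which for $f \in A$ global specializes to exactly these maps — and conversely the continuity of $\supp$ and of $\vert f\vert^\sgn$ for local regular functions is implied, since for a basic open $D(g) \subset X$ the functions on $D(g)$ are generated by $A$ and $1/g$, and $\vert 1/g\vert^\sgn = 1/\vert g\vert^\sgn$ on the locus $\{\vert g\vert^\sgn \ne 0\} = \supp^{-1}(D(g))$, which is already open. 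So the identification is a homeomorphism.

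The main obstacle I expect is the verification that an element of $\cM_r(A)$ genuinely descends to an order-compatible absolute value on the residue field $K(p_x)$ — in particular checking that $P = \{f : \vert f\vert^\sgn \ge 0\}$ satisfies $P + P \subset P$. This is the one place where axiom (3) must be used in its full strength (both inequalities, across all sign patterns of $f$ and $g$), and it is the heart of why the three axioms of a signed seminorm are exactly the right ones to simultaneously encode a seminorm and a compatible ordering. Everything else — multiplicativity, well-definedness on the fraction field, the topological comparison — is routine bookkeeping once this point is secured.
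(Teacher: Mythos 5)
Your proposal is correct and follows essentially the same route as the paper: the forward map $x\mapsto\sgn_x(f_x)\cdot|f_x|_x$, the inverse via $|f|_x:=\bigl|\,|f|^\sgn_x\bigr|$ together with the ordering $\{f\in A:|f|^\sgn_x\ge0\}$ (with coinciding supports), and the observation that both topologies are the initial topologies for the evaluation maps $x\mapsto|f|^\sgn_x$. The paper simply leaves the axiom checks and the topological comparison as routine, which you spell out in more detail.
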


\begin{proof}
Given $x = (p_x, \abs_x, <_x) \in \Xanpm$ we get a
signed seminorm $\abs^\sgn_x$ on $A$ by
$$|f|_x^\sgn\>:=\>\sgn_x(f_x)\cdot|f_x|_x$$
where $f_x = f + p_x \in  K(p_x)$.

Conversely, given $\abs^\sgn_x \in \cM_r(A)$,
we get a $K$-seminorm $\abs_x $ on $A$ defined by $\vert f \vert_x := \big \vert \vert f
\vert_x^\sgn \big \vert$, and an ordering $\{f \in A : |f|_x^\sgn \geq
0\}$, where the support of the seminorm and the support of the ordering coincide.

These procedures are mutual inverses, and it is easy to see that the topologies agree.
\end{proof}

\begin{prop} \Label{Completion does not matter}
Let $K$ be a real closed field with a compatible non-archimedean absolute value.
Then the completion $\Khat$ of $K$ is also a real closed field with a compatible absolute value.
Further, if $X$ is a variety over $K$, then $(X \otimes \Khat)_r^{\an}$ is homeomorphic to $\Xanpm$.
\end{prop}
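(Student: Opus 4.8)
The statement has two parts. First, that $\Khat$ is real closed with a compatible absolute value; second, the homeomorphism $(X\otimes\Khat)_r^{\an}\cong\Xanpm$.

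For the first part, the plan is to recall that the completion of a field with respect to an absolute value is again a valued field, and that the order on $K$ extends uniquely to $\Khat$ by the last remark before Definition~2.7 (an order compatible with $\abs_L$ extends uniquely to $\hat L$). To see that $\Khat$ is real closed one can argue that $K$ is dense in $\Khat$ in the order topology, which by order-compatibility of $\abs_K$ coincides with the topology induced by $\abs_K$; since real closedness is characterized by the intermediate value property for polynomials (or: $-1$ is not a sum of squares and every odd-degree polynomial has a root), a density argument together with continuity of polynomial evaluation gives that $\Khat$ is real closed. One must also check that the resulting order on $\Khat$ is compatible with the extended absolute value, which follows from compatibility on the dense subfield $K$ and continuity of $\abs$ and the sign function away from $0$.

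For the homeomorphism, the idea is to produce a natural continuous bijection in each direction and observe that the topologies are defined by the same families of functions. Given a point $x=(p_x,\abs_x,<_x)\in\Xanpm$, base change of the residue field along $K\to\Khat$: concretely, over an affine chart $X=\Spec(A)$ a point is a signed $K$-seminorm $\abs^\sgn_x\colon A\to\R$ by the preceding proposition, and since $\abs^\sgn_x$ restricts to $\sgn(c)\vert c\vert_K$ on $K$, it extends uniquely and continuously to a signed $\Khat$-seminorm on $A\otimes_K\Khat$ (the completion $\Khat$ sits inside $\R$ on which $\abs^\sgn$ is already defined via the values on $A$). Conversely a signed $\Khat$-seminorm on $A\otimes_K\Khat$ restricts to one on $A$. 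These maps are mutually inverse; the globalization to non-affine $X$ is by glueing, using that base change and $\mathcal{M}_r(-)$ are both compatible with the affine covers and that the support maps commute with base change on schemes. Continuity in both directions is immediate because in each case the topology is the coarsest making $x\mapsto\vert f\vert^\sgn_x$ continuous for $f$ in the coordinate ring, and the correspondence is exactly "restrict/extend the function $\vert\,\cdot\,\vert^\sgn$".

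The main obstacle is the real closedness of $\Khat$: one must be a little careful that the density of $K$ in $\Khat$ in the order topology is genuinely available here, i.e. that order-topology-density follows from $\abs_K$-density, which is exactly the compatibility hypothesis; and then that a dense real closed subfield forces the ambient field to be real closed. The cleanest route is: $\Khat$ is a real field (no sum of squares equals $-1$, by continuity from $K$), and it is algebraic-extension-closed in the required sense because any proper real algebraic extension would, by Artin–Schreier, contain $\sqrt{-1}$ over a real closed subfield — but one can instead directly verify the intermediate value property for one-variable polynomials over $\Khat$ using that their coefficients can be approximated by elements of $K$ and $K$ is real closed, then quote the characterization of real closed fields. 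The base-change homeomorphism itself is then routine bookkeeping once one knows $\Khat$ is a legitimate coefficient field.
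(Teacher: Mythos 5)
The main gap is in your argument that $\Khat$ is real closed. You claim this follows from density of $K$ in $\Khat$ together with ``continuity of polynomial evaluation'' applied to the intermediate value characterization, but that argument does not close: approximating $f\in\Khat[T]$ and the endpoints by data over $K$ and using real closedness of $K$ only produces points $c_n\in K$ with $f(c_n)\to 0$; since $\Khat$ is complete but not locally compact, such a sequence of approximate roots need not converge (and naive bisection also fails, because halving an interval does not shrink it in the non-archimedean absolute value). To extract an actual root you need a genuinely new ingredient — continuity of roots over a complete valued field (Krasner-type estimates after extending $\abs$ to the algebraic closure), Hensel's lemma/henselianity of $\Khat$, or the route the paper takes: $\hat{K'}$, the completion of $K'=K(\sqrt{-1})$, is algebraically closed, $[\hat{K'}:\Khat]\le[K':K]=2$, and since the order extends to $\Khat$ it is real, hence real closed by Artin--Schreier. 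Your alternative ``any proper real algebraic extension would, by Artin--Schreier, contain $\sqrt{-1}$'' is garbled and likewise does not supply the missing input, namely that the algebraic closure of $\Khat$ is a finite extension.

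For the homeomorphism your plan matches the paper's (reduce to the affine case, extend signed seminorms from $A$ to $A\otimes_K\Khat$ by coefficient approximation, restrict in the other direction), but two points you call immediate are exactly what the paper has to prove: that the extension is well defined and unique (an ultrametric estimate showing $\vert f\vert^{\sgn}$ is determined by its values on $K$-coefficient approximants), and that the two topologies coincide. Continuity of the restriction map is indeed trivial, but the other direction requires showing that for $f\in\Khat[T_1,\dots,T_n]$ and $c\in\R$ the set $\{\vert f\vert^{\sgn}>c\}$ is open in the topology generated by polynomials over $K$; the paper does this with an explicit bound involving $s>\vert T_i\vert_y$ and a $K$-polynomial $g$ with $\vert a_I-b_I\vert<s^{-1}\deg(f)^{-1}c$. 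Also, the parenthetical ``$\Khat$ sits inside $\R$'' is not correct and should be removed; the extension is by limits of values $\vert f_j\vert^{\sgn}\in\R$, not by embedding $\Khat$ in $\R$. These second-part issues are repairable bookkeeping, but the real closedness step as you propose it is a genuine gap.
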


\begin{proof}
Let $K'$ be the algebraic closure of $K$ and $\hat{K'}$ its completion.
Then $\hat{K'}$ is algebraically closed and $2 = [K' : K] \geq [\hat{K'}: \Khat]$.
Since the order on $K$ extends to the order on $\Khat$, we have
that $\Khat$ is real, thus real closed.

For the second statement, we may assume that $X$ is affine.
Then the statement is equivalent to showing that any signed seminorm on a $K$-algebra
$A$ extends uniquely to $A \otimes_K \Khat$.

Since a signed seminorm on $K[T_1,\dots,T_n] / \mathfrak{a}$ where
$\mathfrak{a}$ is an ideal is just a signed seminorm on $K[T_1,\dots,T_n]$
whose associated prime contains $\mathfrak{a}$, we may reduce to the case $X = \A^n$.

We use the map $(\A^n_{\Khat})^{\an}_r \to (\A^{n}_K)^{\an}_r$ given by
restriction of signed seminorms.
We construct a section to show that this map is bijective.
We will use the multiindex notation; that is, for a tuple $I = (i_1,\dots,i_n)$, 
we write $T^I := \prod_{j=1}^n T^{i_j}$ and $\vert I \vert = \sum_{j=1}^n i_j$. 
Given a signed seminorm $\abs^\sgn$ on $K[T_1,\dots,T_n]$ and $f = \sum_{I : \vert I \vert \leq d} a_I T^I \in \Khat [T_1,\dots,T_n]$
we pick sequences $b_{I,j} \to a_I$ and define $f_j := \sum_{I : \vert I \vert \leq d} b_{I,j} T^I$ and 
$\vert f \vert^\sgn = \lim_{j \to \infty} \vert f_j \vert^\sgn$.
It is easy to check that this is well defined and indeed defines an extension of $\abs^{\sgn}$.

We now identify these two sets along this bijection and consider the two topologies,
one defined by $(\A^n_K)^{\an}_r$ and the other one by $(\A^n_{\Khat})^{\an}_r$.
Clearly the one defined by $\Khat$ is finer then the one defined by $K$.

To see that the topologies agree, fix $f = \sum_{I : \vert I \vert \leq d} a_I T^I \in \Khat[T_1,\dots,T_n]$ and $c \in \R$.
Take $\abs_y^{\sgn} \in (\A^n_K)^{\an}_r$ with $\vert f \vert^\sgn_y > c$
and fix $s > \vert T_i \vert_y$ for all $i$.
Take $g = \sum_{I : \vert I \vert \leq d} b_I T^I \in K[T_1,\dots,T_n]$ of same degree as $f$ such that
$\vert a_I - b_I \vert < s^{-1} \cdot \deg(f)^{-1} \cdot c$.
Then for all $\abs^\sgn \in (\A^n_{\Khat})^{\an}_r$ we have
$\vert f - g \vert < c$.
Then $\vert g \vert^\sgn > c$ implies $\vert f \vert^\sgn > c$ and we have
shown that $\{ \abs^\sgn : \vert f \vert^\sgn > c\}$
contains the set $\{ \abs^\sgn : \vert g \vert^\sgn > c\}$
that contains $\abs_y^\sgn$.
Since the latter set is open in the topology of $(\A^n_K)^{\an}_r$,
this shows that the two topologies agree.
\end{proof}

\begin{prop}
Let $X$ be a variety over $K$ (in particular separated).
Then $\Xanpm$ is a Hausdorff space.
\end{prop}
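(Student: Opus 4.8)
The plan is to deduce Hausdorffness of $\Xanpm$ from Hausdorffness of the Berkovich analytification $\Xan$ by means of the canonical forgetful map $\pi\colon\Xanpm\to\Xan$, $(p_x,\abs_x,<_x)\mapsto(p_x,\abs_x)$. First I would note that $\pi$ is continuous: by construction both topologies are the coarsest making the support maps continuous together with the evaluation functions ($\vert f\vert$ on $\Xan$, resp.\ $\vert f\vert^{\sgn}$ on $\Xanpm$) for regular $f$, and $\vert f\vert_{\pi(x)}=\bigl\vert\,\vert f\vert^{\sgn}_x\bigr\vert$ depends continuously on $x$, so it suffices to check these compositions are continuous, which they are. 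Since $X$ is separated, Berkovich's theorem \cite{Berkovich} gives that $\Xan$ is Hausdorff; hence two points of $\Xanpm$ with distinct images under $\pi$ are separated by the preimages of disjoint opens of $\Xan$. This is the step where separatedness is used, and I regard it as the main obstacle — which is exactly why I prefer to import it wholesale from the established Berkovich theory rather than redo the gluing argument by hand.

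It then remains to separate two distinct points $x,y\in\Xanpm$ with $\pi(x)=\pi(y)$, i.e.\ sharing a scheme point $p$ and the same absolute value $\abs$ on $K(p)$, but with $<_x\neq<_y$. Here I would pass to an affine open $U=\Spec A$ containing $p$, with corresponding prime $\p\subset A$ so that $K(p)=\operatorname{Frac}(A/\p)$. From $<_x\neq<_y$ I obtain a nonzero $a\in K(p)$ with $\sgn_x(a)\neq\sgn_y(a)$; writing $a$ as a quotient of two elements of $A/\p$ and using multiplicativity of $\sgn$, this sign disagreement must already be witnessed by some $f\in A$ whose image $\bar f$ in $K(p)$ is nonzero. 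Since $\abs_x=\abs_y$ on $K(p)$ and an absolute value is strictly positive on nonzero field elements, $\vert f\vert^{\sgn}_x$ and $\vert f\vert^{\sgn}_y$ are then nonzero reals of opposite sign, so $\{z\in\supp^{-1}(U):\vert f\vert^{\sgn}_z>0\}$ and $\{z\in\supp^{-1}(U):\vert f\vert^{\sgn}_z<0\}$ are disjoint open neighborhoods of $x$ and $y$ in $\Xanpm$ — open because $\supp^{-1}(U)$ is open and $z\mapsto\vert f\vert^{\sgn}_z$ is continuous on it.

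For completeness I would remark that the affine case $\Xanpm=\cM_r(A)$ is immediate by the same mechanism: two distinct signed seminorms differ on some $f\in A$ and are separated by the continuous real‑valued function $\vert f\vert^{\sgn}$. An alternative to routing through $\Xan$ would be a direct gluing argument exploiting that, for affine opens $U,V$ of a separated $X$, the intersection $U\cap V$ is again affine (so that when $p_x,p_y$ lie in a common affine open one finishes as in the affine case); but the situation where $p_x$ and $p_y$ do \emph{not} lie in a common affine open is precisely the delicate point, and invoking the Hausdorffness of $\Xan$ disposes of it most cleanly, so that is the route I would take.
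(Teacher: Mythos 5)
Your argument is essentially the paper's own proof: the paper likewise separates two points by a signed evaluation function $\vert f\vert^{\sgn}$ when they can be treated inside a common affine chart, and otherwise pushes the problem to $\Xan$ via the canonical forgetful map and quotes Berkovich's theorem that $\Xan$ is Hausdorff when $X$ is separated. Your case split (equal versus distinct images in $\Xan$, instead of existence versus non-existence of a common affine open containing $p_x,p_y$) is an immaterial variant, and your reduction of a sign disagreement of the two orderings on $K(p)$ to a sign disagreement at some $f\in A$ with $\bar f\neq 0$, followed by separation via $\{\vert f\vert^{\sgn}>0\}$ and $\{\vert f\vert^{\sgn}<0\}$, is correct.

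One step is missing, and it is the first line of the paper's proof: \cite[Theorem 3.4.8]{Berkovich} concerns analytic spaces over a \emph{complete} non-archimedean field, while the $K$ of this section (for instance the real Puiseux series field) need not be complete. The paper therefore first reduces to the completion $\Khat$ by Proposition~\ref{Completion does not matter}; you need the same reduction, or the analogous (and not entirely formal) identification of $\Xan$ with $(X\otimes\Khat)^{\an}$, before invoking Berkovich's Hausdorffness. Note that only your first case needs this --- the affine separation argument works over any $K$ --- so inserting ``we may assume $K$ is complete by Proposition~\ref{Completion does not matter}'' at the start closes the gap and makes your proof match the paper's.
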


\begin{proof}
We may assume that $K$ is complete by Proposition \ref{Completion does not matter}.
Let $x \neq y$ be two points in $\Xanpm$.
It is sufficient to construct a continuous map from an open subset 
of $\Xanpm$ that contains $x$ and $y$ to a Hausdorff space that separates $x$ and $y$.

If there exists an open affine subscheme of $U = \Spec(A)$ of $X$ that contains both $p_x$ and $p_y$,
we are reduced to the affine case, since $U_r^{\an}$ is an open subset of $\Xanpm$.
Since $x \neq y$, there exists $f \in A$ such that $\vert f \vert_x^\sgn \neq \vert f \vert_y^\sgn$
and the map $\cM_r(A) \to \R, \; \abs^\sgn \mapsto \vert f \vert^\sgn$ separates $x$ and $y$.

If there does not exist such an open affine $U$, we use the canonical map
$\psi \colon \Xanpm \to \Xan$.
Since $p_x \neq p_y$, certainly $\psi(x) \neq \psi(y)$.
Since $X$ is separated, $\Xan$ is Hausdorff by \cite[Theorem 3.4.8]{Berkovich} and
the proposition follows.
\end{proof}

\subsection{Tropicalization of \texorpdfstring{$\Xanpm$}{Xran}}
Let $X$ be an affine $K$-variety. Given a family of regular functions
$\scrF = (f_1,\dots,f_n)$ on $X$, we have a natural real tropicalization map
$$\trop_{r,\scrF} \colon \Xanpm \to \R^n,\quad
x \mapsto (\vert f_i \vert^\sgn_x )_{i=1,\dots,n}$$
By construction, this map is continuous.

\begin{lem} \Label{trop proper}
If $f_1,\dots,f_n$ generate the coordinate ring $A$ of $X$, the
map $\trop_{r,\scrF}$ is
a proper map of topological spaces.
\end{lem}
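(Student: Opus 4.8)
The plan is to reduce the properness of $\trop_{r,\scrF}$ to properness of the analogous (signless) Berkovich tropicalization map, which is already known, by exploiting the canonical map $\psi \colon \Xanpm \to \Xan$ and the fact that signs live in a compact space. Concretely, recall that $\cM_r(A)$ sits over $\cM(A) = \Xan$ via $\psi(|\cdot|^\sgn_x) = |\cdot|_x$, and that the signless tropicalization $\trop_\scrF \colon \Xan \to \R_{\ge 0}^n$, $x \mapsto (|f_i|_x)_i$, is proper whenever $f_1,\dots,f_n$ generate $A$ (this is standard for Berkovich spaces, e.g.\ via the fact that $\cM(A)$ is the inverse limit of closed affinoid subdomains; it is also implicit in Payne's theorem cited in the introduction). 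Since $\trop_{r,\scrF}(x)$ determines $\trop_\scrF(\psi(x)) = (|\,|f_i|^\sgn_x|\,)_i$ by taking absolute values coordinatewise, the map $\trop_{r,\scrF}$ factors as $\Xanpm \to \Xan \times \{\pm1, 0\}^n$ followed by a continuous map to $\R^n$; but it is cleaner to argue directly with closed sets.

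First I would verify directly that $\cM_r(A)$, and hence $\Xanpm$, is Hausdorff (already proven) and that $\trop_{r,\scrF}$ is continuous (already noted). To prove properness, since $\R^n$ is locally compact Hausdorff and $\Xanpm$ is Hausdorff, it suffices to show that the preimage of every closed ball $\ol B = [-R,R]^n \subset \R^n$ is compact. Now $\trop_{r,\scrF}^{-1}(\ol B) = \{x : |f_i|^\sgn_x \in [-R,R] \text{ for all } i\} = \{x : |f_i|_x \le R \text{ for all } i\}$, because $|f_i|^\sgn_x$ has absolute value $|f_i|_x$. Under $\psi$, this set maps into $\trop_\scrF^{-1}([0,R]^n) \subset \Xan$, which is compact by properness of $\trop_\scrF$ (here using that the $f_i$ generate $A$). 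So it remains to show $\psi$ is a proper map, or at least that $\psi^{-1}(\text{compact})$ is compact for the compacts arising this way.

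The key point is that the fibers of $\psi$ are compact and $\psi$ is a closed map; equivalently, I claim $\psi$ is proper. Given a $K$-seminorm $|\cdot|_x$ on $A$ with support $\p$, the fiber $\psi^{-1}(|\cdot|_x)$ is the set of orderings $P$ on the fraction field $k(\p)$, equipped with the valuation $v_x$ induced by $|\cdot|_x$, that are compatible with $v_x$; by the Baer--Krull theorem (Theorem~\ref{Baer-Krull-Theorem}) this fiber is in bijection with $\{\pm 1\}^{I} \times \Sper(k_{v_x})$ for a suitable index set $I$, a profinite-type space, and in fact one checks the subspace topology on the fiber is compact. More efficiently: consider the map $\Phi \colon \Xanpm \to \Xan \times \prod_{f \in A}\{-1,0,1\}$ sending $x \mapsto (\psi(x), (\sgn_x(f))_f)$, where the target is a product of a compact Hausdorff space (the spectral-type space $\Xan$ is compact when $X$ is affine... actually $\cM(A)$ for $A$ finitely generated over $K$ is only locally compact, so I must be careful) with a compact space. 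The honest statement is: $\Phi$ is a homeomorphism onto a closed subspace, and therefore $\Xanpm \to \Xan$ is proper because the projection $\Xan \times (\text{compact}) \to \Xan$ is proper; hence $\trop_{r,\scrF} = \trop_\scrF \circ \psi$ composed with coordinatewise sign data is proper, being a composite of proper maps with the known properness of $\trop_\scrF$.

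\textbf{Main obstacle.} The delicate point is the claim that $\psi \colon \Xanpm \to \Xan$ is proper, and cleanly identifying $\Xanpm$ with a closed subspace of $\Xan \times \{-1,0,1\}^{A}$: one must check that the conditions cutting out the image (the seminorm/sign compatibility axioms (1)--(3) of Definition~\ref{dfn signed seminorm}, plus consistency of the sign pattern with the underlying seminorm) are closed conditions, and that the resulting subspace topology agrees with the topology on $\Xanpm$. Once that is in place, properness of $\trop_{r,\scrF}$ follows formally: $\trop_{r,\scrF}$ is the composition of the proper closed embedding $\Xanpm \hookrightarrow \Xan \times \{-1,0,1\}^{\{f_1,\dots,f_n\}} \times \{-1,0,1\}^{A\setminus\{f_i\}}$, the projection to $\Xan \times \{-1,0,1\}^n$ (proper, since the last factor is compact), the proper map $\trop_\scrF \times \id$, and finally the continuous injection $[0,\infty)^n \times \{-1,0,1\}^n \to \R^n$, $((r_i),(\epsilon_i)) \mapsto (\epsilon_i r_i)$, whose restriction to the relevant subset (where $\epsilon_i = 0 \iff r_i = 0$) is a homeomorphism onto its image and is proper. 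Assembling these gives the result; alternatively, one can bypass the embedding description and argue the compactness of $\trop_{r,\scrF}^{-1}(\ol B)$ by hand using nets, extracting convergent subnets of the seminorm part via compactness in $\Xan$ and of the sign part via compactness of $\{-1,0,1\}^n$, then checking the limit lies in $\Xanpm$.
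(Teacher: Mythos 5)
The central claim of your main route---that $x\mapsto\bigl(\psi(x),(\sgn_x(f))_{f\in A}\bigr)$ is a homeomorphism of $\Xanpm$ onto a closed subspace of $\Xan\times\{-1,0,1\}^A$---is false, and this is exactly the point you flag as delicate but do not resolve. The sign maps $x\mapsto\sgn_x(f)$ are not continuous on $\Xanpm$: the set $\{\sgn_x(f)=0\}=\{|f|^\sgn_x=0\}$ is closed but in general not open (on $\A^1$ with $\abs_K$ nontrivial, the signed evaluation at $0$ is a limit of signed evaluations at small positive $a\in K$, along which $\sgn_x(T)=1$). So your map is not even continuous, hence not an embedding, and the subspace topology from the product is strictly finer than that of $\Xanpm$; properness of $\psi$ therefore does not follow ``formally'' from the stated claim. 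Note also that properness of $\psi$ is Lemma~\ref{forget an proper} of the paper, but there it is \emph{deduced from} the present lemma, so your route reverses the logical order and must supply an independent proof. A correct repair in your spirit is the weaker statement that for compact $C\subset\Xan$ the subset of $C\times\{-1,0,1\}^A$ cut out by the (closed) conditions (1)--(3) of Definition~\ref{dfn signed seminorm} on the function $f\mapsto\epsilon_f|f|_y$ is compact and maps \emph{onto} $\psi^{-1}(C)$ by the continuous assembly map $(y,\epsilon)\mapsto(f\mapsto\epsilon_f|f|_y)$; only continuity in this direction is needed, never continuity of the signs. Separately, your final factor $((r_i),(\epsilon_i))\mapsto(\epsilon_ir_i)$ restricted to the locus $\epsilon_i=0\iff r_i=0$ is not proper (the preimage of $[-1,1]$ contains the points $(1/k,1)$, which have no limit in that locus), so the ``composite of proper maps'' phrasing also breaks; the closed-ball preimage argument you sketch is the right substitute.

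Your fallback net argument has a related gap: compactness of $\{-1,0,1\}^n$ only controls the signs of $f_1,\dots,f_n$, and a signed seminorm is not determined by (nor even definable from) its values on generators, so you cannot produce the limit point this way; you need the signs of \emph{all} $f\in A$ to converge, i.e.\ Tychonoff on $\{-1,0,1\}^A$. Once you do that you have essentially rederived the paper's proof, which is shorter and avoids splitting into $\Xan$ and signs altogether: $\Xanpm$ embeds homeomorphically as a closed subset of $\prod_{f\in A}\R$ via $x\mapsto(|f|^\sgn_x)_{f\in A}$ (conditions (1)--(3) are closed); on the preimage of a bounded set each $f_i$ is bounded, hence by multiplicativity and the ultrametric inequality every $f\in A$ is bounded, so this preimage is a closed subset of a product of compact intervals and is compact by Tychonoff. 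That argument is self-contained and also spares you the appeal to the properness of the Berkovich $\trop_{\scrF}$, which the paper never invokes.
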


\begin{proof}
Let $B$ be a compact subset of $\R^n$.
Since $B$ is bounded, writing $B' = (\trop_{r,\scrF})^{-1}(B)$,
each $f_i$ is bounded on $B'$. Since the $f_i$'s generate $A$, by the
ultrametric triangle inequality $\{\vert f \vert_x : x \in B'\}$
is bounded for each $f \in A$.
By construction, the map
$$\Xanpm \to \prod_{f \in A} \R,\quad
x \mapsto  \bigl(\vert f \vert^\sgn_x \bigr)_{f \in A}$$
is a homeomorphic embedding.
Its image is closed, since it is defined by closed conditions
\ref{dfn signed seminorm} (1), (2) and (3).
Thus $B'$ is a closed subset of $\prod_{f \in A} \R$.

Since all $f \in A$ are bounded on $B'$, the image of $B'$ is contained
in a product of bounded closed intervals,
thus contained in a compact subset by Tychonoff's theorem.
Since $B'$ is closed, it is compact.
\end{proof}

Again for a family $\scrF = (f_1,\dots,f_n)$ of elements of $A$, there is a tropicalization map
on $X^{\an}$ given by
\[\trop_{\scrF} \colon X^{\an} \rightarrow \R^n, \quad \abs_x \mapsto
(|f_i|_x)_{i=1,\dots,n}.\]

\begin{lem} \Label{forget trop proper}
The following map is proper:
\[\trop_{r,\scrF}(\Xanpm) \to
\trop_{\scrF}(\Xan), \quad (a_1,\dots,a_n)\mapsto
(|a_1|,\dots,|a_n|).\]
\end{lem}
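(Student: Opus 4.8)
The plan is to identify the map of the statement with the restriction of the absolute‑value map $\alpha\colon\R^n\to\R^n$, $(a_1,\dots,a_n)\mapsto(|a_1|,\dots,|a_n|)$, which is proper, and to reduce its properness to a closedness statement that is then handled via the Baer--Krull theorem.

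Put $Z:=\trop_{r,\scrF}(\Xanpm)$ and $W:=\trop_{\scrF}(\Xan)$, both inside $\R^n$, and let $\psi\colon\Xanpm\to\Xan$, $(p,\abs_x,<_x)\mapsto(p,\abs_x)$, be the forgetful map. Since $|f|_{\psi(x)}=|f|_x=\big\vert\,|f|^{\sgn}_x\,\big\vert$ for every $f\in A$, we have $\trop_{\scrF}\circ\psi=\alpha\circ\trop_{r,\scrF}$; in particular $\alpha(Z)\subseteq W$, and the map in question is $\bar\alpha:=\alpha|_Z\colon Z\to W$. Now $\alpha$ is proper, because $\alpha^{-1}(C)$ is closed and bounded, hence compact, for compact $C$. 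Consequently, if $Z$ is closed in $\alpha^{-1}(W)$ — equivalently $\ol Z\cap\alpha^{-1}(W)=Z$, closures taken in $\R^n$ — then for compact $C\subseteq W$ one has $\bar\alpha^{-1}(C)=Z\cap\alpha^{-1}(C)=\ol Z\cap\alpha^{-1}(C)$, which is closed in $\R^n$ and contained in the compact set $\alpha^{-1}(C)$, hence compact; so it suffices to prove $\ol Z\cap\alpha^{-1}(W)=Z$. If $f_1,\dots,f_n$ generate $A$ this is clear, since then $\trop_{r,\scrF}$ is proper by Lemma~\ref{trop proper} and $Z$ is already closed in $\R^n$. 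The real content of the lemma is that it remains true for an arbitrary family $\scrF$.

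For the general case, let $z\in\ol Z\cap\alpha^{-1}(W)$, say $z=\lim_\nu z^{(\nu)}$ with $z^{(\nu)}=\trop_{r,\scrF}(x^{(\nu)})\in Z$, and pick $y=(p,\abs_y)\in\Xan$ with $|f_i|_y=|z_i|$ for all $i$ (such $y$ exists as $(|z_1|,\dots,|z_n|)\in W$). It is enough to produce an ordering $<$ of $K(p)$ compatible with $\abs_y$ and with $\sgn_<(f_i)=\sgn(z_i)$ for every $i$ — here $f_i$ stands for its image in $K(p)$, and for the $i$ with $z_i=0$ one has $|f_i|_y=0$, hence $f_i=0$, so no condition arises — because then $x:=(p,\abs_y,<)$ lies in $\Xanpm$ with $\trop_{r,\scrF}(x)=(\sgn(z_i)\,|z_i|)_i=z$, giving $z\in Z$. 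By Theorem~\ref{Baer-Krull-Theorem}, the orderings of $K(p)$ compatible with $\abs_y$ correspond to pairs $(Q,(\epsilon_j)_j)$, where $Q$ is an ordering of the residue field $k$ of $\abs_y$ and $(\epsilon_j)_j$ fixes signs on a $\Z/2\Z$-basis $(|g_j|_y)_j$ of $\Gamma_{K(p)}/2\Gamma_{K(p)}$; and if one writes $f_i=u_i h_i^2\prod_j g_j^{m_{ij}}$ with $|u_i|_y=1$ and $m_{ij}\in\{0,1\}$ (for the indices with $z_i\ne0$), then $\sgn_<(f_i)=\sgn_Q(\ol{u_i})\prod_j\epsilon_j^{m_{ij}}$. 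Thus the task is to choose an ordering $Q$ of $k$ (so $k$ must be formally real) together with signs $(\epsilon_j)_j$ solving this finite $\Z/2\Z$-affine system whose right-hand side is $(\sgn z_i)_i$.

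The main obstacle is exactly the solvability of that system, and it is here that the hypothesis $z\in\ol Z$ must be used. For large $\nu$ one has $\sgn(z^{(\nu)}_i)=\sgn(z_i)$ whenever $z_i\ne0$, while $|f_i|_{x^{(\nu)}}\to|z_i|$; each $x^{(\nu)}$ exhibits, over its own residue field, a consistent sign choice for $(\sgn z^{(\nu)}_i)_i$, and one must transport this consistency to the single point $y$ — in particular, arrange that the residue field in play is formally real. The way I would carry this out is to replace $X$ by the reduced scheme‑theoretic closure $Y\subseteq\A^n$ of the image of $(f_1,\dots,f_n)\colon X\to\A^n$: on $Y$ the coordinate functions generate the coordinate ring, so by the argument of Lemma~\ref{trop proper} (and its Berkovich analogue) the tropicalization maps $Y^{\an}_r\to\R^n$ and $Y^{\an}\to\R^n$ are proper with closed images; the dominant morphism $X\to Y$ fits $\psi$ and the tropicalizations into a commutative square, and the problem turns into one of extending orderings of residue fields along $X\to Y$ — choosing, among the points of $Y^{\an}$ lying over $(|z_1|,\dots,|z_n|)$, one along which the extension exists. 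Granting this, $<$ is found and $z\in Z$ as above, which completes the proof.
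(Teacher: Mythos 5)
The core of your proposal --- the first two paragraphs --- is exactly the paper's proof: $Z=\trop_{r,\scrF}(\Xanpm)$ is closed in $\R^n$ by Lemma~\ref{trop proper}, the coordinatewise absolute value $\R^n\to\R^n_{\ge0}$ is proper, and restricting a proper map to a closed subset (with target any subset of $\R^n_{\ge0}$ containing the image, such as $\trop_{\scrF}(\Xan)$) preserves properness. Note, however, that the paper reads the lemma with the hypothesis of Lemma~\ref{trop proper} tacitly still in force: its one-line proof cites that lemma for the closedness of the left-hand set, and the only later use (in Lemma~\ref{forget an proper}) is for a generating family $\scrF$. So the ``arbitrary $\scrF$'' case that you declare to be the real content of the lemma is not something the paper proves or needs.

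Judged as a proof of that more general claim, your argument is not complete. The reduction to showing $\ol Z\cap\alpha^{-1}(W)=Z$ is correct, and the Baer--Krull bookkeeping (Theorem~\ref{Baer-Krull-Theorem}) is set up properly, but the decisive step is precisely the one you label ``Granting this'': producing, over the given point $(|z_1|,\dots,|z_n|)$ of $\trop_{\scrF}(\Xan)$, a point of $\Xan$ whose residue field is formally real and for which the resulting finite $\Z/2\Z$-affine sign system admits the solution $(\sgn z_i)_i$. The approximating points $x^{(\nu)}$ lie over varying points of $\Xan$, with varying residue fields and value groups, so their sign-consistency does not transfer to a fixed $y$; and the proposed passage to the closure $Y$ of the image of $(f_1,\dots,f_n)\colon X\to\A^n$ does not settle it, since $\Xan\to Y^{\an}$ need not be surjective over the relevant tropical point and an ordering of a residue field downstairs need not extend along the residue field extension at a point of $X$ above it. In short: for the statement as the paper proves and uses it, your proof is correct and essentially identical to the paper's; the extra generality you aim for is left with a genuine gap at this lifting step.
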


\begin{proof}
The left hand set is a closed subset of $\R^n$ by Lemma
\ref{trop proper}. The map $\R^n \to \R^n_{\geq 0}$,
$a \mapsto \vert a \vert$ is proper, thus so is its restriction
to the closed subset $\trop_{r,\scrF}(\Xanpm)$ of $\R^n$.
\end{proof}

\subsection{Relation with the Berkovich Analytification}

Let $X$ be a $K$-variety.
In this section we study $\Xanpm$ via the canonical map
\begin{align*}
\varphi \colon \Xanpm \to \Xan, \quad (p_x, \abs_x, \le_x) \mapsto (p_x, \abs_x).
\end{align*}

\begin{lem} \Label{forget an proper}
The map $\varphi$ is a proper map of topological spaces.
\end{lem}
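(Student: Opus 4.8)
The plan is to reduce the statement to the affine case and then to exhibit $\varphi$ as a map with a uniform description of fibers in terms of sign data, so that properness follows from the compactness of $\{\pm1\}$-valued indexings via the Baer--Krull theorem. First I would observe that properness is local on the target: for a variety $X$, cover $\Xan$ by the open sets $U^{\an}$ for $U\subset X$ open affine, note that $\varphi^{-1}(U^{\an}) = U_r^{\an}$, and that $\varphi$ restricts to the analogous map for $U$. Since properness can be checked on the members of an open cover of the target, it suffices to treat $X = \Spec(A)$ affine. By Proposition~\ref{Completion does not matter} we may also assume $K$ is complete, which lets us use the Baer--Krull theorem over the relevant valued residue fields without worrying about completions.

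In the affine case I would argue via the criterion that a continuous map to a locally compact Hausdorff space is proper iff preimages of compact sets are compact. Fix a compact $C\subset \Xan$; I want $\varphi^{-1}(C)$ compact. The point is that $\varphi^{-1}(C)$ sits inside $\prod_{f\in A}\R$ via $x\mapsto(|f|_x^\sgn)_{f\in A}$ as in Lemma~\ref{trop proper}, and this embedding is a closed embedding (conditions (1)--(3) of Definition~\ref{dfn signed seminorm} are closed conditions). For $x\in\varphi^{-1}(C)$ we have $|f|_x = |f|_{\varphi(x)}$, and $\{|f|_{\varphi(x)} : \varphi(x)\in C\}$ is bounded for each $f$ since $C$ is compact and $\abs_x\mapsto|f|_x$ is continuous on $\Xan$. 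Hence $|f|_x^\sgn\in[-M_f,M_f]$ with $M_f := \sup_{y\in C}|f|_y < \infty$, so the image of $\varphi^{-1}(C)$ lands in the compact set $\prod_{f\in A}[-M_f,M_f]$. As $\varphi^{-1}(C)$ is closed in $\prod_{f\in A}\R$ (it is the preimage of the closed set $C$ under a continuous map into a Hausdorff space, intersected with the closed image of $\Xanpm$), it is a closed subset of a compact set, hence compact.

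The one subtlety to address carefully is closedness of $\varphi^{-1}(C)$ inside the product. Since $C$ is compact in the Hausdorff space $\Xan$ it is closed there, and $\varphi$ is continuous, so $\varphi^{-1}(C)$ is closed in $\Xanpm$; and $\Xanpm$ embeds as a closed subset of $\prod_{f\in A}\R$ by the argument in Lemma~\ref{trop proper}. Composing, $\varphi^{-1}(C)$ is closed in $\prod_{f\in A}\R$. I would also remark that $\Xan$ is indeed locally compact Hausdorff (Hausdorff by separatedness and \cite[Theorem~3.4.8]{Berkovich}; local compactness of affinoid/analytified affine spaces is standard), so the compact-preimage criterion genuinely characterizes properness here; alternatively one avoids even this by phrasing everything through the closed-embedding picture directly, which is cleanest. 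The main obstacle is essentially bookkeeping rather than a genuine difficulty: making sure the reduction to the affine case is legitimate (properness is local on the base) and that the uniform bound $M_f$ really comes from compactness of $C$ alone and not from any finiteness of $A$ — here generation of $A$ by the $f_i$ is not even needed, only the closed-embedding of $\Xanpm$ into the full product over all $f\in A$.
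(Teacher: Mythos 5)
Your argument is correct, but it follows a different route than the paper. The paper's proof also reduces to $X=\Spec(A)$ affine, but then fixes a \emph{finite generating} family $\scrF=(f_1,\dots,f_n)$ of $A$ and chases a compact subset of $\Xan$ through the commutative square formed by $\varphi$, the two tropicalization maps, and the absolute-value map $\trop_{r,\scrF}(\Xanpm)\to\trop_\scrF(\Xan)$, invoking Lemma \ref{trop proper} and Lemma \ref{forget trop proper} for properness of the other three sides. You bypass both of those lemmas: you only borrow the closed-embedding observation from the proof of Lemma \ref{trop proper} (that $x\mapsto(|f|^\sgn_x)_{f\in A}$ is a homeomorphism onto a closed subset of $\prod_{f\in A}\R$), and you get the coordinatewise bounds $|f|^\sgn_x\in[-M_f,M_f]$ directly from continuity of $y\mapsto|f|_y$ on the compact set $C\subset\Xan$, for every $f\in A$ separately. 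This buys you independence from finite generation and from the ultrametric bounding step of Lemma \ref{trop proper}, and it makes transparent that the bound comes from compactness in $\Xan$ alone; the paper's version buys brevity by reusing lemmas it needs anyway. Two small remarks: the Baer--Krull theorem announced in your opening plan plays no role in your actual argument and should be dropped; and your reduction to the affine case via ``properness is local on the target'' does quietly use that $\Xan$ is locally compact Hausdorff (to replace a compact $C$ by finitely many compact pieces inside affine charts), which you acknowledge but should state as part of the reduction rather than only in connection with the compact-preimage criterion -- the paper's own one-line reduction has the same implicit content, so this is a matter of bookkeeping, not a gap.
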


\begin{proof}
We may assume that $X = \Spec(A)$ is affine.
Let $\scrF = (f_1,\dots,f_n)$ be a family of elements of $A$ that generate $A$.
Let us consider the diagram
\begin{align*}
\begin{xy}
\xymatrix{
\Xanpm \ar[rr] \ar[d] && \trop_{r,\scrF}(X) \ar[d] \\
\Xan      \ar[rr]         &&   \trop_{\scrF}(X).
}
\end{xy}
\end{align*}
Both of the horizontal maps and the right vertical map are proper by
Lemma \ref{trop proper} and \ref{forget trop proper},
so chasing a compact subset of $\Xan$ through the diagram shows that
the left vertical map is also proper.
\end{proof}

\begin{cor}\Label{proper implies compact}
If $X$ is a proper $K$-variety, then $\Xanpm$ is compact.
\end{cor}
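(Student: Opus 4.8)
The plan is to combine Corollary~\ref{proper implies compact}'s natural companion, the properness of $\varphi$ (Lemma~\ref{forget an proper}), with the known fact that the Berkovich analytification of a proper variety is compact. First I would recall that if $X$ is a proper $K$-variety, then $\Xan$ is compact: this is \cite[Theorem 3.4.8]{Berkovich}, or can be deduced from the valuative criterion of properness as in \cite[\S3.5]{Berkovich}. Granting this, the argument is a one-liner in topology.

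Concretely, I would argue as follows. By Lemma~\ref{forget an proper}, the canonical map $\varphi \colon \Xanpm \to \Xan$ is proper. A proper map is in particular one under which preimages of compact sets are compact. Since $X$ is proper over $K$, the space $\Xan$ is compact. Taking $\Xan$ itself as the compact subset, its preimage $\varphi^{-1}(\Xan) = \Xanpm$ is therefore compact, as desired.

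The only subtlety worth flagging is that ``proper map'' in the excerpt is used in the sense that preimages of compact sets are compact (this is how Lemmas~\ref{trop proper}, \ref{forget trop proper}, \ref{forget an proper} are phrased and used), so no separation or Hausdorff hypothesis needs to be invoked to run the argument --- though in any case $\Xanpm$ is Hausdorff by the earlier proposition. I expect there to be essentially no obstacle here; the work has all been done in establishing that $\varphi$ is proper, and the present statement is just the observation that a proper map into a compact space has compact total space. If one wanted to avoid citing compactness of $\Xan$, an alternative would be to choose a closed embedding of $X$ into projective space, use that $\P^{n,\an}$ is compact, and pull back along $X^{\an} \into \P^{n,\an}$ (a closed immersion, hence proper), but citing \cite{Berkovich} directly is cleaner.

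\begin{proof}
Since $X$ is proper over $K$, its Berkovich analytification $\Xan$ is
compact by \cite[Theorem 3.5.3]{Berkovich}. By Lemma~\ref{forget an
proper}, the canonical map $\varphi \colon \Xanpm \to \Xan$ is proper,
so the preimage $\varphi^{-1}(\Xan) = \Xanpm$ of the compact space
$\Xan$ is compact.
\end{proof}
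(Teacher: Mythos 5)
Your argument is the same as the paper's in its essential structure: compactness of the Berkovich analytification of a proper variety, pulled back along the proper map $\varphi \colon \Xanpm \to \Xan$ of Lemma~\ref{forget an proper}. However, there is one step you skip that the paper does not: the base field $K$ in this paper is only assumed to be a real closed field with a compatible non-archimedean absolute value, and need not be complete (the standard example $\R\{\!\{t\}\!\}$ is not complete). Berkovich's compactness theorem for the analytification of a proper variety (\cite[Theorem 3.4.8]{Berkovich} --- note also that your reference to Theorem 3.5.3 appears to be the wrong number) is a statement about varieties over a \emph{complete} non-archimedean field, so it cannot be cited directly for $\Xan$ as defined here over a possibly non-complete $K$. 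The paper closes this gap by first invoking Proposition~\ref{Completion does not matter}: since $\Xanpm \cong (X \otimes \Khat)_r^{\an}$ and properness of $X$ is preserved under the base change to $\Khat$, one may assume $K$ is complete, and only then apply Lemma~\ref{forget an proper} together with Berkovich's theorem. Your proof would be complete if you either added this reduction or supplied an argument that $\Xan$, with the paper's definition over non-complete $K$, is homeomorphic to the analytification over $\Khat$; as written, the citation's hypothesis is not satisfied. Your remark about the meaning of ``proper map'' (preimages of compacta are compact), and the observation that no Hausdorff hypothesis is needed to run the pullback argument, are both fine and consistent with how the paper uses these lemmas.
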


\begin{proof}
We may assume that $K$ is complete by Proposition \ref{Completion does not matter}. 
Then this follows from Lemma \ref{forget an proper} since $\Xan$ is
compact by \cite[Theorem 3.4.8]{Berkovich}.
\end{proof}

\begin{prop}
\Label{prop:fiber}
Let $x = (p_x, \abs_x) \in \Xan$, let $\Gamma_x$
be the value group and $k(x)$ the residue field of $\abs_x$ on
$K(p_x)$. Then $\varphi^{-1}(x)$ is homeomorphic to $D\times\Sper k(x)$,
where $D$ is a discrete set of cardinality $|\Gamma_x/2 \Gamma_x|$.
\end{prop}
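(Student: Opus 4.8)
The plan is to describe the fiber $\varphi^{-1}(x)$ by unwinding what data is needed to promote a point $x=(p_x,\abs_x)\in\Xan$ to a point of $\Xanpm$, namely a choice of order $<$ on the residue field $K(p_x)$ that is compatible with $\abs_x$. Since the support $p_x$ and the absolute value $\abs_x$ are already fixed, the fiber is precisely the set of such orders, i.e.\ a subset of $\Sper K(p_x)$. The Baer--Krull theorem (Theorem~\ref{Baer-Krull-Theorem}) applied to the valued field $(K(p_x),\abs_x)$ tells us exactly this set: fixing a family $(f_i)_{i\in I}$ in $K(p_x)^*$ whose absolute values form a $\Z/2\Z$-basis of $\Gamma_x/2\Gamma_x$, the orders compatible with $\abs_x$ correspond bijectively to pairs $\bigl((\epsilon_i)_{i\in I},Q\bigr)\in\{\pm1\}^I\times\Sper k(x)$. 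So as a set, $\varphi^{-1}(x)\cong\{\pm1\}^I\times\Sper k(x)$, and $|\{\pm1\}^I|=|\Gamma_x/2\Gamma_x|$, giving the discrete set $D$ of the claimed cardinality.

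The remaining work is to identify the topology. First I would reduce to $X=\Spec A$ affine and note that we may replace $K$ by its completion via Proposition~\ref{Completion does not matter} if convenient, though that is not essential here. The subspace topology on $\varphi^{-1}(x)$ inherited from $\Xanpm$ is generated by the functions $\abs^\sgn\mapsto|f|^\sgn=\sgn(f)\cdot|f|_{\abs_x}$ for $f\in A$; since $|f|_{\abs_x}$ is constant on the fiber (it only depends on $x$), the topology on $\varphi^{-1}(x)$ is generated by the sign functions $f\mapsto\sgn_<(f)$, i.e.\ it is exactly the subspace topology induced from $\Sper A$ on the set of orders with support $p_x$ compatible with $\abs_x$ — equivalently, identifying such orders with orders on $K(p_x)$, the Harrison topology on this subset of $\Sper K(p_x)$. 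Then I would check that under the Baer--Krull bijection this Harrison topology matches the product topology on $\{\pm1\}^I\times\Sper k(x)$ with $\{\pm1\}^I$ discrete: the sign of $f_i$ is a locally constant function (the coordinate projections to $\{\pm1\}$), and the reduction map $<\mapsto\ol{<}$ pulls back Harrison-open sets of $\Sper k(x)$ to Harrison-open sets, while conversely every subbasic Harrison set $\{f>0\}$ on $K(p_x)$ can be rewritten, after scaling $f$ by a suitable product of the $f_i$ to normalize its value to $1$, in terms of sign conditions on the $f_i$'s together with a condition on the reduction $\ol{f/\!\prod f_i^{e_i}}$ in $k(x)$. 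This is the standard continuity analysis accompanying Baer--Krull.

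The main obstacle I expect is precisely this last topological matching: verifying that the Baer--Krull bijection is a homeomorphism onto $D\times\Sper k(x)$, and in particular that $D$ carries the \emph{discrete} topology inside the fiber. The content is that for any $g\in K(p_x)^*$ with $|g|_{\abs_x}\le 1$, the Harrison set $\{<\,:\,g>_<0\}$ restricted to the fiber is open — which follows because on the fiber one has $\sgn_<(g)=\sgn_{\ol<}(\ol g)$ when $|g|_{\abs_x}<1$ is impossible for a unit of value $1$... more carefully, one writes $g=u\prod_i f_i^{e_i}$ with $|u|_{\abs_x}=1$ and $e_i\in\{0,1\}$, so that $\sgn_<(g)=\sgn_{\ol<}(\ol u)\cdot\prod_i\epsilon_i^{e_i}$, exhibiting $\{g>_<0\}$ on the fiber as a finite union of (clopen in $\{\pm1\}^I$) $\times$ (Harrison-open in $\Sper k(x)$) boxes. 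Conversely the coordinate $\epsilon_i=\sgn_<(f_i)$ is itself such a function, so each slice $\{\epsilon=\epsilon^{(0)}\}\times\Sper k(x)$ is open, giving discreteness of $D$. I would present the forward bijection and cardinality count carefully and then treat the topology comparison somewhat tersely, citing the Baer--Krull setup, since it is routine but slightly tedious bookkeeping with the normalization $g=u\prod f_i^{e_i}$.
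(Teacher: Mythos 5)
Your proof is correct and follows the same route as the paper, which simply records the proposition as a direct consequence of the Baer--Krull theorem (Theorem~\ref{Baer-Krull-Theorem}); you merely spell out the topological bookkeeping that the paper leaves implicit. One small precision: the normalization should be $g = u\,h^2\prod_i f_i^{e_i}$ with $|u|_x=1$ (absorbing a square, since $|g|\prod_i|f_i|^{-e_i}$ lies in $2\Gamma_x$ rather than being $1$), which leaves your sign formula $\sgn_<(g)=\sgn_{\ol<}(\ol u)\prod_i\epsilon_i^{e_i}$ unchanged.
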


\begin{proof}
This is a direct consequence of the Baer-Krull Theorem (Theorem \ref{Baer-Krull-Theorem}).
\end{proof}

\begin{example}[The affine line]
\Label{ex:line}
We will now describe the affine line over $K$,
where $K$ is real closed and complete with respect to a non-archimedean absolute value.
We will describe $\A_r^{1, \an}$ by considering the map $\A_r^{1, \an}
\to \A^{1, \an}$ and looking at the fibers.

We view $\A^{1, \an}$ as the set of pairs $x = (p_x, \abs_x)$
where $p_x$ is a point in the scheme $\A^1$ and $\abs_x$ is an absolute
value on the residue field $K(p_x)$.
Let $k(x)$ be the residue field of the valuation ring of $K(p_x)$ with respect to $\abs_x$.
We denote by $\Hscr(x)$ the completion of $K(p_x)$ with respect to
$\abs_x$.  Its residue field with respect to $\abs_x$ is also $k(x)$.
Let $K' = K(\sqrt{-1})$ be the algebraic closure of $K$.
The points in $\A^{1,\an}$ are classified by Berkovich into four types
as follows \cite[p.~17]{Berkovich}.

\begin{description}
\item[Type I] $p_x$ is a closed point of $\A^1$, and $|f|_x =
  |f(a)|_{K'}$ for some $a \in K'$.  If $a \in K$, then $p_x = \langle
  T - a\rangle$ and $K(p_x) = K$. Otherwise $p_x = \langle
  (T-a)(T-\overline{a})\rangle$, and $K(p_x)=K'$.
\item[Type II] $p_x$ is the generic point on $\A^1$, so $K(p_x) = K(T)$,
the field of rational functions in one variable $T$ over $K$.
The field $k(x)$ is of transcendence degree~$1$ over $k_K$ and
$\vert \Hscr(x) \vert_{x} = \vert K \vert_K$.
\item[Type III]  $p_x$ is the generic point on $\A^1$, $k(x)$ is algebraic over $k_K$ and
$\vert \Hscr(x) \vert_{x} / \vert K \vert_K$ is a free abelian group of rank~$1$.
\item[Type IV] $p_x$ is the generic point on $\A^1$,
$k(x)$ is algebraic over $k_K$ and
$\vert \Hscr(x) \vert_{x} = \vert K \vert_K$.
\end{description}

We say that a point $x = (p_x, \abs_x)$ in $\A^{1,\an}$ is {\em real}
if there is an ordering of $K(p_x)$ compatible with $\abs_x$, that is,
its fiber under the map $\A_r^{1, \an} \to \A^{1, \an}$ is non-empty.
Using Proposition~\ref{prop:fiber} we can describe the fibers of $x \in \A^{1,\an}$ under the map $\A_r^{1, \an} \to \A^{1, \an}$ as follows.
\begin{center}
\begin{tabular}{l|c|c|c}
Type & $k(x)$ & $|\Gamma_x / 2\Gamma_x|$ & fiber \\
\hline  \hline
Type I, real & $k_K$ & 1 & one point, since $k_K$ has a uniqe ordering \\ \hline
Type I, non-real & $k_{K'}$ & 1 & $\varnothing$ \\ \hline
Type II, real & $k_K(T)$ & 1  & homeomorphic to $\Sper k_K(T)$ \\ \hline
Type II, non-real& $k_{K'}(T)$ & 1 & $\varnothing$ \\ \hline
Type III, real& $k_K$ & 2 & two points with discrete topology \\ \hline
Type III, non-real& $k_{K'}$ & 2 & $\varnothing$ \\ \hline
Type IV, real& $k_K$ & 1 & one point \\ \hline
Type IV, non-real& $k_{K'}$ & 1 & $\varnothing$ \\ \hline
\end{tabular}
\end{center}

When $k_K = \R$, such as when $K = \R\{\!\{t\}\!\}$, the real spectrum $\Sper k_K(T)$ is described explicitly in Section~\ref{sec:background}.
In general it can be described using Dedekind cuts.

For a point $x = (p_x, \abs_x)$ of type II or III, there exists a unique closed disc $D(a,r) = \{z \in K : |z - a| \leq r\}$ 
where $a$ is in $K'$ and $r$ is a positive real number, such that $\abs_x = \sup_{c \in D(a,r)} \vert f (c) \vert_{K'}$.  
The point $x$ is of type II if $r \in | K |_K$ and type III otherwise. 
It is real if and only if the disk $D(a,r)$ contains a point in $K$, which means $|\im(a)| \leq r$, 
where $\im(a)$ denotes the imaginary part of $a$.  
Type IV points can be constructed from nested sequences of discs, and the real ones are obtained as sequences of discs with real points.
\end{example}

\subsection{Relation with the Real Spectrum} \Label{Relation with the real spectrum}

In this section, we assume that $K$ is real closed and
that $\abs_K$ is non-trivial.
Then $\vert K \vert_K$ is necessarily dense in $\R_{\geq 0}$.
Further let $X$ be a variety over $K$. 

We first prove the following crucial lemma, which says that the
absolute value of an extension field $L \supset K$ is determined by
the position of its elements relative to $K$.

\begin{lem}  \Label{order implies abs}
Let $L/K$ be an extension of ordered fields with a compatible
non-archimedean absolute values.
Let $f \in L$ and
\begin{align*}
R(f) = \{ r \in  K_{\geq 0} : r \leq \sgn(f) \cdot f \}
\end{align*}
Then $\vert f \vert = \sup \{ \vert r \vert : r \in R(f) \}$.
\end{lem}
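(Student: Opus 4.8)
The plan is to prove the two inequalities $\vert f\vert \ge \sup\{\vert r\vert : r\in R(f)\}$ and $\vert f\vert \le \sup\{\vert r\vert : r\in R(f)\}$ separately. Without loss of generality I may assume $\sgn(f)=+1$, i.e. $f>0$ in $L$ (the case $f=0$ is trivial since then $R(f)=\{0\}$ and $|f|=0$, and the case $f<0$ follows by replacing $f$ with $-f$, using $|{-f}|=|f|$ and $R(f)=R(-f)$ under the sign convention). So assume $f>0$.

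For the first inequality ($\ge$): if $r\in R(f)$ then $0\le r\le f$, so by compatibility of the absolute value with the order on $L$ we get $\vert r\vert \le \vert f\vert$. Taking the supremum over $r\in R(f)$ gives $\sup\{\vert r\vert : r\in R(f)\}\le\vert f\vert$.

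For the reverse inequality ($\le$), set $s:=\sup\{\vert r\vert : r\in R(f)\}$; note $s\le\vert f\vert$ is already known, and I must rule out $s<\vert f\vert$. Suppose for contradiction that $s<\vert f\vert$. Since $\vert K\vert_K$ is dense in $\R_{\ge0}$ (as $\abs_K$ is nontrivial — this is exactly the hypothesis invoked at the start of Section~\ref{Relation with the real spectrum}), I can choose $a\in K^*$ with $s<\vert a\vert_K<\vert f\vert$; replacing $a$ by $-a$ if needed, assume $a>0$ in $K$. Now $\vert a\vert_K<\vert f\vert$ means $\nu(a)>\nu(f)$, and since the absolute value on $L$ is compatible with the order, $\vert a\vert_K < \vert f \vert$ together with $a,f>0$ forces $a<f$ in $L$: indeed compatibility gives that $|a|<|f|$ implies (taking $b=-a$ in the equivalent formulation $|x|>|y| \To \sgn(x+y)=\sgn(x)$) that $\sgn(f-a) = \sgn(f) = +1$. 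Hence $a\le\sgn(f)\cdot f$ with $a\in K_{\ge0}$, so $a\in R(f)$, whence $\vert a\vert_K\le s$ — contradicting $s<\vert a\vert_K$. Therefore $s=\vert f\vert$.

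The only subtle point — the main obstacle worth watching — is the step deducing $a<f$ in $L$ from $\vert a\vert_K<\vert f\vert$: this is precisely where order-compatibility of the non-archimedean absolute value is used, and one must be careful to apply it on $L$ (where both $a$ and $f$ live and are positive) rather than just on $K$. The paper's stated compatibility condition, ``$|x|>|y| \To \sgn(x+y)=\sgn(x)=\sgn(x-y)$'', applied with $x=f$, $y=a$ (so $|x|=|f|>|a|=|y|$), yields $\sgn(f-a)=\sgn(f)=+1$, i.e. $f>a$, which is exactly what is needed. Everything else is routine, and the density of $\vert K\vert_K$ in $\R_{\ge0}$ — the one genuinely nontrivial external input — is guaranteed by the running hypothesis that $\abs_K$ is nontrivial.
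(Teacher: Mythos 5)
Your proof is correct and follows essentially the same route as the paper: reduce to $f>0$, get $\sup\le|f|$ from order-compatibility, and get the reverse inequality by picking, via density of $|K|_K$ in $\R_{\ge0}$, an element $a\in K_{\ge0}$ with absolute value strictly between the supremum and $|f|$ and noting compatibility forces $a<f$. (Only cosmetic remark: the density of $|K|_K$ uses not just nontriviality but also that the value group is divisible, which holds since $K$ is real closed — exactly the running hypothesis of that section of the paper.)
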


\begin{proof}
We may assume $f \neq 0$.
Replacing $f$ by $\sgn(f) \cdot f > 0$ we may assume $f > 0$.
Thus if $0 \leq r \leq \sgn(f) \cdot f$, we have $\vert r \vert \leq \vert f \vert$,
by the condition of compatibility between order and absolute value.
Conversely if $\vert r \vert > \vert f \vert $, then $ r  > f $.
Now the result follows since $\vert K \vert$ is dense in $\R_{\geq 0}$.
\end{proof}

\begin{dfn}
Let $X_r^\mx$ be the set of closed points of $X_r$, and let
$X_r^\arch\subset X_r$ consist of those points $(p,P)\in X_r$ whose
ordered residue field is relatively archimedean over $K$. By this we
mean that, for every $b\in K(p)$, there exists $a\in K$ with
$a-b\in P$.
\end{dfn}

\begin{lem}
We have $X_r^{\arch} \subset X_r^\mx$.
\end{lem}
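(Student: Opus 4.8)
The plan is to show that a relatively archimedean point $\alpha=(p,P)$ has no proper specialization in $X_r$, and is therefore closed. Suppose $\alpha$ specializes to some $\beta=(q,Q)\neq\alpha$. Since open subsets of a scheme are stable under generization, any affine open $U=\Spec(A)$ containing $q$ also contains $p$, and $U_r$ is an open subspace of $X_r$, so we may assume $X=\Spec(A)$. Recall that in the real spectrum $\alpha$ specializes to $\beta$ if and only if $P\subseteq Q$ as subsets of $A$ (see \cite{bcr} or \cite{ks}); as $\beta\neq\alpha$, this inclusion is proper, i.e.\ $P\subsetneq Q$.

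Put $\p=\supp(P)$ and $B=A/\p$. Since $\p=P\cap-P\subseteq P\subseteq Q$, both $P$ and $Q$ descend to orderings of $B$, still denoted $P$ and $Q$; then $P\subsetneq Q$, the support of $P$ in $B$ is $(0)$, and $K(p)=\mathrm{Frac}(B)$ carries the induced ordering $P$. If the support of $Q$ in $B$ were $(0)$ as well, then $P$ and $Q$ would be two orderings of the field $K(p)$ with $P\subseteq Q$, forcing $P=Q$, a contradiction. Hence there is a nonzero $f$ in the support of $Q$. As the support of $P$ in $B$ is $(0)$, exactly one of $f,-f$ lies in $P$, and since $-f$ also lies in the support of $Q$ we may assume $f>_P 0$; in particular $f$ is a unit of $K(p)$ with $f^{-1}>_P 0$.

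Now we use that $\alpha$ is relatively archimedean, applied to $b=f^{-1}\in K(p)$: there is $a\in K$ with $a-f^{-1}\in P$. Multiplying by $f>_P 0$ and using $P\cdot P\subseteq P$ gives $af-1\in P\subseteq Q$, and this is an element of $B$. On the other hand, the support of $Q$ is an ideal of $B$ containing $f$, so $af$ lies in it and thus $-af\in Q$. Adding, $-1=(af-1)+(-af)\in Q$; together with $1\in Q$ this gives $1\in Q\cap -Q$, contradicting that the support of $Q$ is a proper prime ideal. Hence $\alpha$ has no proper specialization, so $\alpha\in X_r^{\mx}$.

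I do not expect a serious obstacle: the argument is routine once one recalls that specialization in the real spectrum is inclusion of orderings. The only genuine idea is to feed $f^{-1}$ — the inverse of a nonzero element of the support of the larger ordering — into the archimedean bound, which is exactly where the hypothesis $\alpha\in X_r^{\arch}$ is used; the only steps needing a little care are the reduction to the affine case and the passage to $B=A/\supp(P)$.
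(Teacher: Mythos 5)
Your proof is correct. Every step checks out: the reduction to an affine chart (using that the support map $X_r\to X$ is continuous, so the scheme point of a specialization $\beta$ of $\alpha$ lies in the closure of that of $\alpha$, hence any affine open containing it contains both points); the fact that specialization in $\Sper(A)$ is inclusion of orderings; the passage to $B=A/\supp(P)$, where $Q$ descends because $\supp(P)\subset Q\cap -Q$; the observation that $\supp(\bar Q)\neq(0)$ since two orderings of a domain with zero support and one contained in the other must coincide; and the final computation feeding $f^{-1}$ into the relative archimedean bound to produce $-1\in Q$, contradicting properness of $\supp(Q)$.

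The route differs from the paper's only in that the paper does not argue at all: it simply invokes the description of specializations in the real spectrum from Knebusch--Scheiderer (III.7, Kor.~5), which encodes specializations of a point via convex subrings/valuation-theoretic data of its ordered residue field, and reads off the lemma from that structure theory. You instead use only the elementary characterization ``$\beta\in\overline{\{\alpha\}}$ iff $P_\alpha\subset P_\beta$'' and a direct two-line computation where the relatively archimedean hypothesis is applied to $f^{-1}$ for $f$ a nonzero element of $\supp(Q)$. What your version buys is self-containedness (no appeal to the convexity/valuation description of specialization chains), at the cost of a couple of routine verifications (that $Q$ descends to $A/\supp(P)$, that the field ordering restricts to $\bar P$ on $B$) which you gloss over but which are standard and harmless.
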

\begin{proof}
This follows from the description of specializations in the real spectrum,
see \cite[III.7, in particular Kor.~5]{ks}.
\end{proof}

\begin{prop} \Label{Xan to Xarch}
The image of the canonical map $\psi \colon \Xanpm \to X_r$
is $X_r^{\arch}$.
\end{prop}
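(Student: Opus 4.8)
The plan is to prove the two inclusions separately, using the crucial Lemma~\ref{order implies abs} as the main engine. For $\psi(\Xanpm)\subset X_r^\arch$: a point $x=(p_x,\abs_x,<_x)\in\Xanpm$ maps to the point $(p_x,P_x)\in X_r$ where $P_x$ is the ordering $<_x$ on $K(p_x)$. Given any $b\in K(p_x)$, I need $a\in K$ with $a-b\in P_x$, i.e. $b\le_x a$. Since $\abs_x$ extends the nontrivial $\abs_K$, its value group is either trivial or has some element; in any case $|b|_x$ is a finite real number, and because $|K|_K$ is dense in $\R_{\ge0}$ I can pick $r\in K_{>0}$ with $|b|_x<|r|$. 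By the compatibility of $<_x$ with $\abs_x$ (the contrapositive of the compatibility condition: $|r|>|b|_x\Rightarrow\sgn(r-b)=\sgn(r)$), this forces $b<_x r$, so $a=r$ works. Hence the ordered residue field is relatively archimedean over $K$, and $(p_x,P_x)\in X_r^\arch$.

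For the reverse inclusion $X_r^\arch\subset\psi(\Xanpm)$: given $(p,P)\in X_r^\arch$, I must produce an absolute value $\abs_x$ on $K(p)$ extending $\abs_K$ and compatible with $P$. This is where Lemma~\ref{order implies abs} is used in reverse as a \emph{definition}: for $f\in K(p)$ set $R(f)=\{r\in K_{\ge0}:r\le_P\sgn_P(f)\cdot f\}$ and define $|f|_x:=\sup\{|r|:r\in R(f)\}$. The relative archimedean hypothesis guarantees $R(f)$ is bounded above in $K$ (there is $a\in K$ with $\sgn_P(f)\cdot f<_P a$, hence every $r\in R(f)$ satisfies $r<_P a$ and so $|r|\le|a|$ by order–absolute-value compatibility in $K$), so the supremum is a finite real number; and it is clearly $\ge0$, and $=|f|_K$ when $f\in K$. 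The main work is then checking that $\abs_x$ so defined is multiplicative, satisfies the ultrametric inequality, and is compatible with $P$. Multiplicativity and the ultrametric inequality should follow from manipulating the sets $R(fg)$, $R(f+g)$ in terms of $R(f)$, $R(f+g)$ using that $K$ is real closed (so square roots of positives exist, letting one approximate products), together with density of $|K|_K$ in $\R_{\ge0}$; compatibility with $P$ is essentially built into the definition via the sign $\sgn_P(f)$. Finally, $(p,\abs_x,<_P)$ is a point of $\Xanpm$ mapping to $(p,P)$ under $\psi$, giving the inclusion.

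I expect the main obstacle to be verifying that the sup-formula for $\abs_x$ is genuinely an absolute value — in particular multiplicativity, $|fg|_x=|f|_x|g|_x$. The subtlety is that $R(f)\cdot R(g)\subset R(fg)$ gives only $\le$; for the reverse one needs that positive elements of $K(p)$ can be approximated from below, in the order, by ratios of elements of $K$ whose absolute values are controlled, which uses both real-closedness of $K(p)$ (or at least that it is relatively archimedean, so elements are "finite") and density of $|K|_K$. I would handle this by first reducing to $f,g>_P0$, writing $|f|_x=\alpha$, $|g|_x=\beta$, picking $r,s\in K_{>0}$ with $|r|$ close to $\alpha$ and $|s|$ close to $\beta$ and $r<_P f$, $s<_P g$, so that $rs<_P fg$ gives $|fg|_x\ge|rs|=|r||s|$ close to $\alpha\beta$; the opposite bound $|fg|_x\le\alpha\beta$ comes similarly by bounding $f,g$ from above. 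The ultrametric inequality $\min(|f|_x^\sgn,|g|_x^\sgn)\le|f+g|_x^\sgn\le\max(|f|_x^\sgn,|g|_x^\sgn)$ for the associated signed seminorm should drop out of the corresponding inequalities for elements of $K$ together with the compatibility hypothesis on $\abs_K$, and I would phrase the final compatibility check directly as: $|r|>|f|_x$ with $r\in K$ implies $r>_P\sgn_P(f)\cdot f$, which is immediate from the definition of $R(f)$ and density.
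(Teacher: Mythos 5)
Your plan is correct, but it proves the proposition by a genuinely different route than the paper. The paper's proof is valuation-theoretic: it identifies the image of $\psi$ with the set of $(p,P)$ admitting a maximal proper \emph{convex} subring $B\subset K(p)$ with $K\cap B=O_K$, uses finiteness of the transcendence degree of $K(p)/K$ to get a largest proper convex subring containing $O_K$, and then observes that $K\cap B=O_K$ holds iff $K\not\subset B$ iff $K(p)$ is relatively archimedean over $K$; this is short but leans on the correspondence between order-compatible rank-one absolute values and convex valuation rings. You instead construct the extension of $\abs_K$ directly, by reading Lemma~\ref{order implies abs} backwards as a definition, $|f|_x:=\sup\{|r|_K: r\in K_{\ge0},\ r\le_P \sgn_P(f)f\}$, with relative archimedeanness giving finiteness and compatibility with $P$ built in; this is more hands-on and self-contained but shifts the work into verifying the axioms. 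The two checks your route requires, and which you should make explicit, are: (i) the identity $\sup\{|r|_K: r\in K_{\ge0},\ r\le_P |f|_P\}=\inf\{|a|_K: a\in K,\ a\ge_P |f|_P\}$, which follows because any $c\in K_{>0}$ with $|c|_K$ strictly between the two numbers must be comparable to $|f|_P$ under $P$ (using density of $|K|_K$ in $\R_{\ge0}$, available since $\abs_K$ is nontrivial and $K$ is real closed); with this identity, multiplicativity and the ultrametric inequality follow exactly by your two-sided approximation by elements of $K$, and real closedness of $K(p)$ is not actually needed; and (ii) that the resulting multiplicative map has trivial kernel, i.e.\ is an absolute value and not merely a seminorm, which follows either from multiplicativity applied to $f\cdot f^{-1}$ or by applying relative archimedeanness to $f^{-1}$ to bound $|f|_P$ below by a positive element of $K$. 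With these two points supplied, your argument goes through and gives the same conclusion as the paper, with the added benefit that the extension visibly restricts to $\abs_K$ on the nose, whereas the paper's convex-subring argument leaves the passage from a rank-one valuation to a real-valued absolute value extending $\abs_K$ implicit.
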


\begin{proof}
Let $O_K=\{a\in K\colon|a|\le1\}$ be the given valuation ring of $K$.
We may assume that $X=\Spec(A)$ is affine, so $A$ is a finitely
generated $K$-algebra. 
The image of $\Xanpm$ in $X_r=\Sper(A)$ consists of all $(p,P)\in
X_r$ for which there is a maximal proper {\em convex} subring $B$ of $K(p)$
with $K\cap B=O_K$.  (A subring $B$ of $K(p)$ is called {\em convex} if for any $b \in B$
 and $a \in K(p)$ with $|a| < |b|$, we have $a \in B$.)
Given $(p,P)\in X_r$, note that there always is
a largest proper convex subring $B$ of $K(p)$ (since $K(p)/K$ has
finite transcendence degree), and that $O_K\subset B$.
By the previous remark, $(p,P)\in\Xanpm$ if and only if
$K\cap B=O_K$. This in turn holds if and only if $K$ is not contained
in $B$, or equivalently, if and only if $K(p)/K$ is relatively
archimedean.
\end{proof}

\begin{thm} \Label{forget real injective} \Label{xanpm=xrarch}
The canonical map $\psi \colon \Xanpm \to X_r^{\arch}$
is a homeomorphism.
\end{thm}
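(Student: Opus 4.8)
The plan is to show that $\psi\colon\Xanpm\to X_r^{\arch}$ is a continuous open bijection; any such map is a homeomorphism. That $\psi$ is onto $X_r^{\arch}$ is exactly Proposition~\ref{Xan to Xarch}, so the remaining points are injectivity, continuity, and openness. By the gluing constructions of $\Xanpm$ and of $X_r$, and since whether a point $(p,P)$ is relatively archimedean over $K$ depends only on the ordered residue field $(K(p),P)$, it is enough to treat $X=\Spec(A)$ with $A$ a finitely generated $K$-algebra, so that $\Xanpm=\cM_r(A)$; the general case then follows because $\psi$ restricts to a homeomorphism over each member of an affine open cover, and these cover $X_r^{\arch}$.

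Injectivity is Lemma~\ref{order implies abs}: if $\psi(x)=\psi(y)$ then $p_x=p_y=:p$ and the orders $<_x,<_y$ on $K(p)$ coincide, and the lemma (which uses that $|K^*|_K$ is dense in $\R_{>0}$, i.e. that $\abs_K$ is nontrivial) shows that a compatible absolute value on $K(p)$ extending $\abs_K$ is determined by the order; hence $\abs_x=\abs_y$ and $x=y$. Continuity is direct: $X_r$ carries the Harrison topology with subbasis $\{(p,P):g>_P0\}$ for $g\in A$, and $\psi^{-1}(\{(p,P):g>_P0\})=\{x\in\cM_r(A):\sgn_x(g)=+1\}=\{x:|g|^\sgn_x>0\}$, which is open by the definition of the topology on $\cM_r(A)$.

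The main step --- the only one I expect to require genuine work --- is that $\psi$ is open. It suffices to show that every subbasic open of $\cM_r(A)$ has the form $\psi^{-1}(V)$ with $V\subset X_r$ open; then its $\psi$-image is $V\cap X_r^{\arch}$, open in $X_r^{\arch}$, and since $\psi$ is injective it carries finite intersections of subbasic opens --- hence all opens --- to opens of $X_r^{\arch}$. A subbasis of $\cM_r(A)$ is given by the sets $\{x:|f|^\sgn_x>c\}$ with $f\in A$ and $c\in\R$, the sets $\{x:|f|^\sgn_x<c\}$ needing no separate treatment since $\{x:|f|^\sgn_x<c\}=\{x:|{-f}|^\sgn_x>-c\}$. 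For $c\ge0$ one has
\[
\{x:|f|^\sgn_x>c\}\;=\;\bigcup_{\substack{r\in K,\ r>0\\ |r|_K>c}}\ \{x\in\cM_r(A):(f-r)>_x0\},
\]
both inclusions following from the compatibility of order and absolute value together with density of $|K^*|_K$ in $\R_{>0}$: for "$\subseteq$", if $|f|^\sgn_x>c\ge0$ then $f>_x0$ and $|f|_x>c$, so one picks $r\in K$ with $0<r$ and $c<|r|_K<|f|_x$, and compatibility forces $f>_x r$. The case $c<0$ reduces, via $|f|^\sgn_x>c\iff|{-f}|^\sgn_x<-c$, to the analogous identity $\{x:|g|^\sgn_x<d\}=\bigcup_{r\in K,\,r>0,\,|r|_K<d}\{x:(r-g)>_x0\}$ for $d>0$, proved the same way with a short case distinction on the sign of $g$ at $x$. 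Since $\{x:h>_x0\}=\psi^{-1}(\{(p,P):h>_P0\})$ for every $h\in A$, the displayed union is $\psi^{-1}(V)$ for an open $V\subset X_r$, as wanted.

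Assembling these, $\psi$ is a continuous open bijection onto $X_r^{\arch}$, hence a homeomorphism. The only delicate bookkeeping lies in the openness computation (tracking strict versus non-strict inequalities and the sign of $c$), and the role of the nontriviality of $\abs_K$ is precisely to provide density of $|K^*|_K$ in $\R_{>0}$, which is what lets one trade the "metric" condition $|f|_x>c$ for the "order" condition "$\exists\,r\in K$ with $r>0$, $|r|_K>c$, $r<_x f$" that is visibly pulled back from the Harrison topology.
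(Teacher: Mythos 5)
Your proposal is correct, and it reaches the conclusion by a genuinely different route than the paper. You prove that $\psi$ is a continuous \emph{open} bijection onto $X_r^{\arch}$, reducing to the affine case $\Xanpm=\cM_r(A)$ and showing there that every subbasic open $\{x:|f|^\sgn_x>c\}$ is the preimage under $\psi$ of a Harrison-open subset of $X_r$, via the identity $\{x:|f|^\sgn_x>c\}=\bigcup_{r\in K,\,r>0,\,|r|_K>c}\{x: f-r>_x0\}$ for $c\ge0$ (and its analogue for the remaining cases); the trade of the metric condition $|f|_x>c$ for an order condition over $K$ is exactly where order--value compatibility and density of $|K^*|_K$ in $\R_{>0}$ (nontriviality of $\abs_K$) enter, and your sign/case bookkeeping checks out. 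The paper instead argues by compactness: injectivity and surjectivity are obtained from the same Lemma \ref{order implies abs} and Proposition \ref{Xan to Xarch} that you invoke, but then it reduces to proper $X$ via Nagata's compactification, uses that $\Xanpm$ is compact for proper $X$ (Corollary \ref{proper implies compact}, which rests on Proposition \ref{Completion does not matter}, properness of $\Xanpm\to\Xan$, and Berkovich's compactness theorem), and concludes because a continuous bijection from a compact space to a Hausdorff space is a homeomorphism. Your argument is more elementary and self-contained --- it avoids compactification, completion, and the Berkovich input, and it makes explicit the stronger fact that the topology of $\Xanpm$ is precisely the trace of the Harrison topology (which the paper only obtains a posteriori) --- while the paper's proof is shorter given the properness and compactness machinery it has already built; the one small point you should make explicit in the glueing step is that for an affine open $U\subset X$ the subspace $\supp^{-1}(U)\subset\Xanpm$ carries the topology of $U_r^{\an}$, a fact the paper also uses (in the Hausdorffness proof) and which follows directly from Definition \ref{Xanpm}.
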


\begin{proof}
This map is injective by Lemma \ref{order implies abs} and surjective by Proposition \ref{Xan to Xarch}.

Since proving the statement for a variety $X$ clearly implies the statement for all open subvarieties,
by Nagata's compactification theorem (see \cite[Theorem 4.1]{Vojta})
 we may assume that $X$ is proper.
Then $\Xanpm$ is compact by Corollary \ref{proper implies compact} and $X_r^\arch$ is Hausdorff.

We showed that $\psi$ is a continuous bijection from a compact space to a Hausdorff space,
thus necessarily a homeomorphism.
\end{proof}

For the rest of this section, we view $\Xanpm$ as a topological
subspace of $X_r$ by means of the topological embedding
$\psi$. Every $K$-valued point in $X(K)$ defines
a point in $\Xanpm$ in a canonical way.

It is well known that $X(K)$ is dense in $X_r$, so we have the following corollary.
\begin{cor} \Label{rational points dense}
We have the following inclusions of dense subspaces of $X_r$:
\[
X(K) \subset \Xanpm = X_r^\arch \subset X_r^\mx \subset X_r.
\]
\end{cor}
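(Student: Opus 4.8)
The plan is to establish the chain of inclusions and density claims one piece at a time, using results already proved in the excerpt.

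First I would recall that the equality $\Xanpm = X_r^\arch$ is exactly Proposition~\ref{Xan to Xarch} together with Theorem~\ref{xanpm=xrarch}, so that part is free. The inclusion $X_r^\arch \subset X_r^\mx$ is the Lemma proved just before Proposition~\ref{Xan to Xarch}. The inclusion $X_r^\mx \subset X_r$ is trivial (it is a subset by definition), and likewise $X(K) \subset \Xanpm$ is the embedding described in the paragraph preceding the corollary: a $K$-point $p$ has residue field $K$, which carries the unique order and the given absolute value, so it is relatively archimedean and lies in $X_r^\arch$. So the only real content is the word \emph{dense}: I must show that each of $X(K)$, $\Xanpm$, and $X_r^\mx$ is dense in $X_r$.

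For $X(K)$ dense in $X_r$: this is the statement flagged as ``well known'' in the text, so I would simply cite it --- it is the Artin--Lang / real Nullstellensatz type density of rational points in the real spectrum of a variety over a real closed field, e.g.\ \cite[Theorem~7.1.19 or 7.2.3]{bcr} (the key input being that $K$ is real closed). Once $X(K)$ is dense in $X_r$, density of the larger sets $\Xanpm$ and $X_r^\mx$ follows immediately from the chain of inclusions $X(K) \subset \Xanpm \subset X_r^\mx \subset X_r$, since a superset of a dense set is dense.

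The main (and essentially only) obstacle is pinning down the correct reference for density of $X(K)$ in $X_r$ when $K$ is real closed but possibly not $\R$; once that citation is in place everything else is formal. I would therefore structure the proof as: (1) note $\Xanpm = X_r^\arch$ by Theorem~\ref{xanpm=xrarch}; (2) note $X_r^\arch \subset X_r^\mx \subset X_r$ by the Lemma above and by definition; (3) check $X(K) \subset \Xanpm$ via the canonical embedding of $K$-points, observing each such point is relatively archimedean; (4) invoke density of $X(K)$ in $X_r$ from the literature; (5) conclude density of the two intermediate sets since they contain $X(K)$.

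\begin{proof}
The equality $\Xanpm = X_r^\arch$ is Theorem~\ref{xanpm=xrarch}, and the
inclusions $X_r^\arch\subset X_r^\mx\subset X_r$ hold by the preceding
Lemma and by definition. For the inclusion $X(K)\subset\Xanpm$, recall
that a point $p\in X(K)$ has residue field $K(p)=K$, which carries the
given absolute value $\abs_K$ and its unique order; the resulting triple
lies in $\Xanpm$, and since $K(p)=K$ it is trivially relatively
archimedean over $K$. Finally, $X(K)$ is dense in $X_r$ (see
\cite[\S7.1]{bcr}), using that $K$ is real closed; since $X(K)$ is
contained in both $\Xanpm$ and $X_r^\mx$, these are dense in $X_r$ as
well.
\end{proof}
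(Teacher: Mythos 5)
Your proposal is correct and follows essentially the same route as the paper: the paper's entire argument is the remark that $X(K)$ is well known to be dense in $X_r$ (the Artin--Lang/Tarski transfer fact from \cite{bcr}), combined with $\Xanpm = X_r^\arch$ from Theorem~\ref{xanpm=xrarch}, the lemma $X_r^\arch\subset X_r^\mx$, and the canonical embedding of $K$-points, exactly as you spell out. Your observation that density of the intermediate sets is then formal, since supersets of dense sets are dense, is precisely what the paper leaves implicit.
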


If $X$ is proper then $\Xanpm$ is compact (Corollary \ref{proper implies compact}), so we have the following,
since $X_r^{\max}$ is Hausdorff.

\begin{cor} \Label{cor density II}
If $X$ is proper, then $X_r^\mx=\Xanpm$.
\end{cor}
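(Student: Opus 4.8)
The plan is to deduce this corollary by combining two facts already established in the excerpt: first, that $X$ proper implies $\Xanpm$ is compact (Corollary~\ref{proper implies compact}), and second, that $X_r^\mx$ is a Hausdorff space, together with the chain of inclusions $\Xanpm = X_r^\arch \subset X_r^\mx$ from Corollary~\ref{rational points dense}. The key point is that a compact subset of a Hausdorff space is closed; so $\Xanpm$, being the image of the compact space under the topological embedding $\psi$, is a closed subset of $X_r^\mx$. To conclude that the inclusion is an equality, I would invoke density: by Corollary~\ref{rational points dense} the subspace $\Xanpm = X_r^\arch$ is dense in $X_r$, hence a fortiori dense in $X_r^\mx$. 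A dense closed subset of a topological space is the whole space, so $X_r^\mx = \Xanpm$.

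First I would recall that $\psi \colon \Xanpm \to X_r^\arch \subset X_r$ is a topological embedding (Theorem~\ref{xanpm=xrarch}), so I may simply identify $\Xanpm$ with its image $X_r^\arch$ inside $X_r$ and work there. Next I would note that $X_r^\mx$, the set of closed points of $X_r$ with the subspace topology, is Hausdorff; this is the standard fact about real spectra that the paper is already using in the proof of Theorem~\ref{xanpm=xrarch}. Then the one substantive topological step: since $X$ is proper, $\Xanpm$ is compact by Corollary~\ref{proper implies compact}, and a compact subspace of a Hausdorff space is closed, so $X_r^\arch$ is closed in $X_r^\mx$. Finally, $X_r^\arch$ is dense in $X_r$ by Corollary~\ref{rational points dense}, so it is dense in $X_r^\mx$; a closed dense subset equals the ambient space, giving $X_r^\mx = X_r^\arch = \Xanpm$.

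There is essentially no obstacle here — the corollary is a short formal consequence of results proved earlier in the section, and the "hard part" (the compactness of $\Xanpm$ for proper $X$, which rests on Berkovich's compactness theorem via Lemma~\ref{forget an proper}) has already been done. The only thing to be mildly careful about is the logical direction of density: one should note that density of $X_r^\arch$ in the larger space $X_r$ immediately implies density in the intermediate space $X_r^\mx$, since $X_r^\arch \subset X_r^\mx \subset X_r$. I would state the proof in two or three sentences along exactly these lines.
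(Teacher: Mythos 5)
Your argument is correct and is essentially the paper's own proof spelled out in detail: the paper likewise combines compactness of $\Xanpm$ for proper $X$ (Corollary~\ref{proper implies compact}) with the Hausdorffness of $X_r^\mx$ and the density assertion of Corollary~\ref{rational points dense}, so that $\Xanpm=X_r^\arch$ is a closed dense subset of $X_r^\mx$ and hence equals it. No gaps; your explicit remark that density in $X_r$ passes to the intermediate space $X_r^\mx$ is exactly the implicit step in the paper's one-line justification.
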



\subsection{Paths} \Label{Paths}

In this section we again assume that the absolute value on $K$ is non-trivial.

While $\Xan$ is path connected if $X$ is connected \cite[Theorem 3.2.1]{Berkovich},
there is no corresponding property of $\Xanpm$.
Indeed, we do not have any non-constant paths at all.

\begin{thm} \Label{no paths}
Let $F \colon [0,1] \to \Xanpm$ be a continuous map.
Then $F$ is constant.
\end{thm}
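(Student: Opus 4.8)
The plan is to show that the image of any continuous path in $\Xanpm$ is a single point by exploiting the fact, established in Theorem~\ref{xanpm=xrarch}, that $\Xanpm$ embeds into the real spectrum $X_r$, combined with the ``infinitesimally large/small'' nature of the orderings involved. First I would reduce to the affine case $X=\Spec(A)$: a path $F\colon[0,1]\to\Xanpm$ whose image is connected in $\Xanpm$ has image whose projection $\supp\comp F$ to $X$ is connected, but I actually want something stronger. The key observation is that for a regular function $f$ on an open $U\subset X$, the composite $t\mapsto|f|^\sgn_{F(t)}$ is a continuous real-valued function on the (pulled-back) subinterval, so it suffices to show all these functions are locally constant, i.e.\ that for each $t_0$ and each $f$ the value $|f|^\sgn_{F(t)}$ is constant near $t_0$; then $F$ is constant since the $|f|^\sgn$ separate points.

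The heart of the matter is a dichotomy coming from the valuation: for a fixed $f\in A$ and a point $x\in\Xanpm$, consider whether $|f|^\sgn_x$ lies in the value group $\pm|K^*|_K\cup\{0\}$ or not. I would argue as follows. Work with the point $x_0=F(t_0)$, set $a=|f|^\sgn_{x_0}\in\R$, and use Lemma~\ref{order implies abs}: the absolute value $|f|_{x}$ is the supremum of $|r|$ over $r\in K_{\ge0}$ with $r\le\sgn_x(f)\cdot f$ in the ordered residue field $K(p_x)$. The sign $\sgn_x(f)\in\{-1,0,+1\}$ can only change if $|f|^\sgn_{F(t)}$ passes through $0$, and the set $\{t : |f|^\sgn_{F(t)}>c\}$ and $\{t : |f|^\sgn_{F(t)}<c\}$ are open for every real $c$. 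Now pick $c\in |K^*|_K$ close to $|a|$ from below and above (using density of $|K|_K$ in $\R_{\ge0}$): because $K$ is real closed, the sets $\{x\in\Xanpm : |f|^\sgn_x > s\}$ for $s\in\pm|K^*|_K$ are exactly the pullbacks of Harrison-subbasic sets $\{P : g>_P 0\}$ for suitable $g\in A$ (namely $g = \pm(f^2 - r^2)$ type expressions, or using elements realizing the value $s$). The point is that these subbasic open sets of $X_r$ are \emph{also closed in} $\Xanpm$, because the ``boundary'' value $|f|^\sgn = s$ would require $f$ to be infinitely close to an element of $K$ of absolute value $s$, and relatively archimedean orderings have a rigidity: a path cannot cross such a threshold continuously since on one side the sign/magnitude relation to $K$ is a closed constraint. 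Concretely: fix $r\in K_{>0}$ with $|r|=s$; then $\{x : |f|^\sgn_x \ge s\}$, $\{x:|f|^\sgn_x\le -s\}$, and $\{x : -s\le|f|^\sgn_x\le s\}$ are all closed (defining conditions) \emph{and} the complement decompositions show $\{x : s' < |f|^\sgn_x < s''\}$ for $s',s''\in|K^*|_K$ is clopen in $\Xanpm$ when... — this is the delicate point.

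The main obstacle, and where I would spend most of the effort, is establishing that $\Xanpm$ is \emph{totally disconnected} (or at least that the $|f|^\sgn$-fibers-near-a-value give a basis of clopen sets), which is what kills all paths. I expect the cleanest route is: via $\psi$, identify $\Xanpm$ with $X_r^\arch\subset X_r$, and recall that $X_r$ itself, while not Hausdorff, has the property that its \emph{maximal spectrum} $X_r^\mx$ (a compact Hausdorff space containing $X_r^\arch$ as a dense subspace, by Corollary~\ref{rational points dense}) is totally disconnected — the Harrison topology on real spectra has a subbasis of clopen sets on the maximal part, since $\{P : f>_P 0\}\cap X_r^\mx$ and $\{P : f<_P 0\}\cap X_r^\mx$ together with $\{P : f=0\}$ give complementary closed pieces. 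Then a path $[0,1]\to\Xanpm\subset X_r^\mx$ lands in a connected subset of a totally disconnected space, hence is constant. So the real steps are: (1) reduce to affine; (2) invoke the homeomorphism $\Xanpm\cong X_r^\arch$ and the inclusion into $X_r^\mx$; (3) prove $X_r^\mx$ is totally disconnected using that for $f\in A$ the three sets $\{f>0\}$, $\{f<0\}$, $\{f=0\}$ partition $X_r^\mx$ into relatively clopen pieces (the archimedean/closed condition forces $\supp$ to behave well, so ``$f=0$'' is clopen on the maximal part); (4) conclude $F$ is constant. The subtlety in step (3) is precisely why one needs the \emph{maximal} (or relatively archimedean) points rather than all of $X_r$: on all of $X_r$, specialization makes $\{f=0\}$ merely closed and the space is genuinely connected in general, whereas restricting to $X_r^\arch$ removes the non-closed generizations and recovers total disconnectedness.
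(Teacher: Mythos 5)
There is a genuine gap, and it sits exactly where you locate the ``heart of the matter'': your step (3), the claim that $X_r^\mx$ (or $\Xanpm\cong X_r^\arch$) is totally disconnected with the sign sets giving clopen pieces, is false --- indeed it is contradicted by this paper's own results. By Theorem~\ref{thm:connected} (and Corollary~\ref{cor:tropConnected}), if $S$ is semialgebraically connected then $\Sanpm$ is connected; in particular $(\A^1)^{\an}_r$ is connected, and the proof of Theorem~\ref{thm:connected} explicitly uses that $J_r^{\an}=J_r^\mx$ is connected for $J=[0,1]_K$. The content of Theorem~\ref{no paths} is precisely that these spaces are connected but not path-connected, so no argument via total disconnectedness can succeed. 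Concretely, the pieces you want to be clopen are not: in $\Sper K[T]^\mx$ take $t\in K$ with $0<t$ and $|t|_K<1$; the closed points $P_{t^m}$ lie in $\{T>0\}$ and converge to $P_0\in\{T=0\}$, because for any $f$ with $f(0)>0$ one has $|f(t^m)-f(0)|\to 0$, hence $f(t^m)>0$ for large $m$ by compatibility of order and absolute value. So $\{T>0\}\cap X_r^\mx$ is open but not closed, and the same example defeats your proposed clopen threshold sets $\{x:|f|^\sgn_x>s\}$.

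The paper's proof uses a different mechanism that genuinely exploits the interval. For $X=\A^1$: assuming $F$ nonconstant, one first arranges, by composing with $\varphi\colon\Xanpm\to\Xan$ and shrinking the domain, that $F([0,1])$ avoids $X(K)$; then two image points must differ in sign on some $f\in K[T]$, and since the only points of $\A^{1,\an}_r$ at which a nonzero polynomial has signed seminorm $0$ are $K$-rational (non-real closed points carry no ordering), the two open sets $\{f>0\}$ and $\{f<0\}$ pull back to a disconnection of $[0,1]$ --- a contradiction. The general case is then an induction on dimension: the projection $\A^{n+1}\to\A^n$ composed with $F$ is constant by induction, so the path lies in a fiber, which Lemma~\ref{fiber is A1} identifies with $(\A^1_L)^{\an}_r$ for a real closed extension $L/K$, and the line case applies again. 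Your proposal has no substitute for the crucial step of moving the path off the rational points (where signs can legitimately change through $0$), and its global disconnectedness claim would prove far too much.
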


We first prove the following lemma:

\begin{lem} \Label{fiber is A1}
Let $X = \Spec(A)$ be an affine scheme, let $x\in \Xanpm$, and let
$L$ be the real closure of the ordered quotient field of $A/p_x$.
Denoting by $\pi \colon (\A^1 \times X)_r^{\an} \to \Xanpm$
the projection, the canonical map
\begin{align*}
(\A^1_L)_r^{\an} \to \pi^{-1}(x)
\subset (\A^1 \times X)_r^{\an}
\end{align*}
is a homeomorphism.
\end{lem}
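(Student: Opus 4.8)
The plan is to unwind the definitions on both sides and exhibit mutually inverse continuous maps. A point of $\pi^{-1}(x) \subset (\A^1 \times X)_r^{\an}$ is a triple $(q, \abs_q, <_q)$ lying over $x = (p_x, \abs_x, <_x)$; in particular $q$ is a scheme point of $\A^1 \times X$ mapping to $p_x \in X$, so $q$ is a point of $\A^1_{K(p_x)}$, i.e. a point of $\Spec K(p_x)[T]$ together with an absolute value and order on its residue field restricting to $\abs_x, <_x$ on $K(p_x)$. On the other side, a point of $(\A^1_L)_r^{\an}$ is a point of $\Spec L[T]$ with a compatible absolute value and order on its residue field; here $L$ is the real closure of the ordered fraction field $k := \operatorname{Frac}(A/p_x)$, equipped with the order coming from $<_x$ and the unique extension of $\abs_x$ (using the remark after the Baer--Krull theorem that a compatible absolute value extends uniquely to $L$, since $L/k$ is algebraic). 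The map $(\A^1_L)_r^{\an} \to \pi^{-1}(x)$ is the one induced by the $K$-algebra map $A[T] \to L[T]$, i.e. restriction of signed seminorms; I would first check it lands in $\pi^{-1}(x)$ and is continuous (both are automatic, since these maps are defined by pullback of regular functions and the topologies are the coarsest making $\abs^{\sgn}_{(-)}(f)$ continuous).

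The substance is constructing the inverse. Given $(q, \abs_q, <_q) \in \pi^{-1}(x)$, the residue field $K(q)$ is a finitely generated field extension of $K(p_x)$ of transcendence degree $\le 1$, carrying an order extending $<_x$ and a compatible absolute value extending $\abs_x$. I want to produce from this a point of $(\A^1_L)_r^{\an}$, i.e. a prime of $L[T]$ with compatible order and absolute value on the residue field. The key algebraic input is that $L$ is relatively algebraically closed in any real closed field containing it, and more precisely: the ordered field $K(q)$, being an extension of the ordered field $k$, has a real closure $R$ containing (a copy of) $L$; the absolute value on $K(q)$ extends uniquely to $R$. Now inside $R$ we have the element which is the image of $T$ under $A[T] \to K(q)$; call it $\theta$. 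This $\theta$ generates a subfield $L(\theta) \subset R$, which is a residue field of a prime of $L[T]$ (either $L[T]/(g)$ for the minimal polynomial $g$ of $\theta$ over $L$, if $\theta$ is algebraic, or $L(T)$ if $\theta$ is transcendental over $L$), and it inherits an order and compatible absolute value by restriction from $R$. This defines the desired point of $(\A^1_L)_r^{\an}$. I would then verify that these two constructions are mutually inverse — the composite one way recovers $(q, \abs_q, <_q)$ because the data on $K(q)$ is determined by its restriction to $K(p_x)[\theta]$ plus the (unique, by Lemma~\ref{order implies abs} applied over the real closed field $L$) compatible absolute value, and by Lemma~\ref{order implies abs} the absolute value is even determined by the order, so really only the order on $K(q)$ extending that on $L(\theta)$ matters and it is pinned down.

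The main obstacle I expect is the bookkeeping around \emph{well-definedness and canonicity} of the inverse map: the real closure $R$ of $K(q)$ and the embedding $L \into R$ are only unique up to unique isomorphism over $k$, so I must argue that the resulting prime of $L[T]$ and the induced order/absolute value on its residue field do not depend on these choices. This is handled by the uniqueness clauses: real closures of an ordered field are unique up to a unique order-isomorphism (Artin--Schreier), the compatible absolute value on an algebraic extension is unique by the remark following Baer--Krull, and hence the subfield $L(\theta) \subset R$ with its order is canonical. A secondary point is \emph{continuity of the inverse}: since the topology on $\pi^{-1}(x)$ is induced from $(\A^1\times X)_r^{\an}$, whose topology is generated by the functions $\abs^{\sgn}_{(-)}(h)$ for $h \in A[T]$, it suffices to observe that for each such $h$ the value $\vert h \vert^{\sgn}$ at $(q,\abs_q,<_q)$ equals the value of the corresponding $h_L \in L[T]$ at the associated point of $(\A^1_L)_r^{\an}$, which is immediate from the construction; so the inverse is continuous as well. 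Having a continuous bijection with continuous inverse then gives the homeomorphism. (If one prefers, one can instead invoke that both sides, after suitable properness reductions, are compact Hausdorff and use Theorem~\ref{xanpm=xrarch} to identify each with a piece of a real spectrum, but the direct argument above is cleaner and avoids properness hypotheses.)
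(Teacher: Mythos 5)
Your set-theoretic construction of the two mutually inverse maps (restriction along $A[T]\to L[T]$ one way; real closure of $K(q)$, the canonical copy of $L$ inside it, and the subfield $L(\theta)$ the other way) is essentially sound, and you correctly flag and handle the canonicity issues via Artin--Schreier uniqueness and the uniqueness of the compatible extension of the absolute value along algebraic ordered extensions. The genuine gap is the continuity of the inverse, and the justification you offer for it is logically inverted. The topology of $\pi^{-1}(x)$ being the initial topology for the functions $\vert h\vert^\sgn$ with $h\in A[T]$ makes it cheap to check continuity of maps \emph{into} $\pi^{-1}(x)$ --- so your observation that these functions match up only re-proves continuity of the \emph{forward} map $(\A^1_L)_r^{\an}\to\pi^{-1}(x)$. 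Continuity of the inverse requires that for \emph{every} $g\in L[T]$, with coefficients genuinely in $L\setminus K(p_x)$, the function $z\mapsto\vert g\vert^\sgn$ at the associated point be continuous on $\pi^{-1}(x)$; equivalently, that the topology on $(\A^1_L)_r^{\an}$ generated by the functions coming from $K(p_x)[T]$ already equals the full topology generated by all of $L[T]$. For instance, for $g=T-\sqrt{c}$ with $c\in K(p_x)$ positive, one must express closeness to the specific root $\sqrt{c}$ of $T^2-c$ by sign and size conditions on polynomials over $K(p_x)$; this is true but is precisely the nontrivial topological content of the lemma, and it is not addressed by your argument.

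This is also where your approach diverges most from the paper's: the paper transports the whole statement into the real spectrum via the identification $\Xanpm\cong X_r^{\arch}$ of Theorem~\ref{xanpm=xrarch}, imports the homeomorphism $\Sper L[T]\cong\pi_r^{-1}(x)$ from Coste--Roy \cite[Prop.~4.3]{cr} (which is exactly where the "coefficients in $L$ versus coefficients in $K(p_x)$" issue is dealt with), and is then left only with the elementary check that relative archimedean-ness over $K$ and over $L$ correspond, using that $L$ is relatively archimedean over $K$ since $x\in X_r^{\arch}$. To close your gap you should either follow that route (your parenthetical alternative, made precise: identify both sides with subspaces of real spectra and quote the Coste--Roy fiber theorem, so no properness or compactness is actually needed), or supply a direct argument, e.g.\ via Thom encodings/sign conditions on the minimal polynomials of the coefficients, showing that each $\vert g\vert^\sgn$, $g\in L[T]$, is continuous in the coarser topology. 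Without one of these, the homeomorphism claim is not established, only a continuous bijection.
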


\begin{proof}
Let us denote by $\pi_r \colon (\A^1 \times X)_r \to X_r$ the canonical map on real spectra.
Identifying $\Xanpm$ with the subset $X_r^{\arch}$ of $X_r$,
we have to show that $\A^{1, \arch}_{L, r} \to \pi_r^{-1}(x) \cap
(\A^1 \times X)_r^{\arch}$
is a homeomorphism.
Since $\A^1_{L, r} \to \pi_r^{-1}(x)$ is a homeomorphism by \cite[Prop.\ 4.3]{cr} we have to show that an order on $A[T]$
that extend the order $P_x$ is relatively archimedean over $K$ if and only if its extension to $L[T]$
is relatively archimedean over $L$.
This is implied by the fact that $L$ is relatively archimedean over $K$, which holds since $x \in X_r^{\arch}$.
\end{proof}

\begin{proof}[Proof of Theorem \ref{no paths}]
We treat the case $X = \A^1$ first.
We may assume that $K$ is complete by Lemma \ref{Completion does not matter}.
Assume that $F$ is not constant.
By considering the composition of $F$ with the canonical map $\varphi \colon \Xanpm \to \Xan$,
after possibly shrinking $F$ we may assume that $F([0,1]) \cap X(K) = \emptyset$.
Pick two points $\abs_1^\sgn \neq \abs_2^\sgn \in F([0,1])$.
Since these define different points in $\A^1_r$, there exists $f \in K[T]$ such that
$\sgn_1 (f) \neq \sgn_2(f)$.
Then $F^{-1}(V(f)) \cup F^{-1}(V(-f))$ is a cover of $[0,1]$ by disjoint non-empty open subsets, a contradiction.
Note here that since the image of $F$ does not contain any rational points, $f$ does not vanish on the image of $F$.

Now assume the statement is known for $\A^n$.
Let $F \colon [0,1] \to \A_r^{n+1, \an}$.
If we denote by $\pi \colon \A_r^{n+1, \an} \to \A_r^{n, \an}$ the projection,
then $\pi \circ F$ has to be constant by the inductive hypothesis, thus $F([0,1])$ is contained in a fiber of $\pi^{-1}(x)$.
By Lemma \ref{fiber is A1} this fiber is homeomorphic to $\A^{1, \an}_{L,r}$ for an extension $L / K$.
Again by the inductive hypothesis, $F$ has to be constant.

The case of general $X$ follows from the affine case, which in turn follows from the $\A^n$ case.
\end{proof}


\section{Semialgebraic Sets}
\Label{sec:SA}

As before, let $K$ be a real closed field with a non-archimedean absolute value,
and $X = \Spec(A)$ an affine $K$-variety. 
Let $S\subset X(K)$ be a semialgebraic subset.

We denote by $\tilde S$ the constructible subset of $X_r = \Sper A$
associated with the semialgebraic set $S$.  
In particular, if $S= \bigcup_{i=1}^m \{\xi\in X(K) \colon\sgn f_{ij}(\xi)=\epsilon_{ij} \; j=1,\dots,r\}$ is any finite semialgebraic description of $S$ 
(with $f_{ij}\in A$ and $\epsilon_{ij}\in\{-1,0,1\}$), 
then $\tilde S$ has the same description in the real spectrum, i.e.
$$\tilde S\>=\> \bigcup_{i=1}^m \{x \in\Sper A \colon\sgn_x(f_{ij})=\epsilon_{ij}, \; i=1,\dots,r\},$$
and the mapping $S \mapsto \tilde S$ is compatible with boolean combinations
and taking closure and interior. 
This construction does not depend on the choice of the semialgebraic description of $S$~\cite[Proposition~7.2.2]{bcr}.

Recall the we denote by $\psi \colon \Xanpm \to X_r$ the canonical map. 

\begin{dfn}\Label{dfnmanpm}
The {\em real analytification} $\Sanpm$ of $S$ is the preimage of $\tilde S$ under $\psi$:
\[\Sanpm := \psi^{-1}(\tilde S).\]
\end{dfn}

In particular, if $S = X(K)$, this definition gives $\Sanpm=\Xanpm$. 
Note that $\Sanpm$ is a Hausdorff topological space.
It follows directly from the definitions that 
the construction $S \mapsto \Sanpm$ is compatible with boolean combinations.

Assume for the rest of this section that the absolute value on $K$ is non-trivial. 
Then $\psi$ is a homeomorphism onto its image by Theorem \ref{forget real injective} 
and forming $\Sanpm$ is compatible with taking closures and interiors. 

We generalize Corollaries~\ref{rational points dense} and~\ref{cor density II} to
the semialgebraic case, and we prove a converse to the latter:

\begin{prop}\Label{mkdense}
Let $S\subset X(K)$ be a semialgebraic set as above.
\begin{itemize}
\item[(a)]
The inclusions $S\subset\Sanpm=\tilde S^\arch\subset\tilde S^\mx$ of
Hausdorff spaces hold, and $S$ is dense in $\tilde S^\mx$.
\item[(b)]
Equality $\Sanpm=\tilde S^\mx$ holds if and only if
$S$ is semialgebraically compact, and if and only if $\Sanpm$ is compact.
\end{itemize}
\end{prop}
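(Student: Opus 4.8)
The plan is to leverage what we already know for the case $S = X(K)$ (Corollaries \ref{rational points dense} and \ref{cor density II}) together with the fact, established just above, that $\psi$ is a homeomorphism onto its image $X_r^\arch$ and that $S \mapsto \Sanpm = \psi^{-1}(\tilde S)$ commutes with boolean combinations, closures, and interiors. For part (a), the inclusions $S \subset \Sanpm \subset \tilde S^\mx$ reduce to $S \subset \tilde S$ together with $\Sanpm = \tilde S \cap X_r^\arch = \tilde S^\arch$ (using $X_r^\arch \subset X_r^\mx$ from the lemma preceding Proposition \ref{Xan to Xarch}, so that $\tilde S^\arch \subset \tilde S^\mx$). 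All three are Hausdorff as subspaces of the Hausdorff space $\Xanpm \cong X_r^\arch$ (the bigger one $\tilde S^\mx$ sits in $X_r^\mx$, which is Hausdorff). For density of $S$ in $\tilde S^\mx$: since $X(K)$ is dense in $X_r$ and $\tilde S$ is constructible, $S = X(K) \cap \tilde S$ is dense in $\tilde S$, hence in the closed-in-$\tilde S$ subset... wait, $\tilde S^\mx$ is not closed in $\tilde S$ in general. The right argument: every point of $\tilde S^\mx$ is a closed point of $X_r$ lying in $\tilde S$; by the standard fact that closed points of a constructible set are dense in it and that $X(K)$ specializes onto them (or directly: $\tilde S^\mx \subset \overline{S}$ in $X_r$ since $S$ is dense in $\tilde S \supset \tilde S^\mx$), combined with the fact that in the subspace $\tilde S^\mx$ the topology is the subspace topology, one gets that $S$ meets every nonempty (relatively) open subset of $\tilde S^\mx$. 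I would phrase this via: $\tilde S^\mx$ is a subspace of $\tilde S$, $S$ is dense in $\tilde S$, and every point of $\tilde S^\mx$ is closed in $X_r$ hence is a limit of points of $S$ that lie in any given neighbourhood basis.

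For part (b), the key input is the semialgebraic analogue of "proper $\Rightarrow$ compact." The implications to organize are: $S$ semialgebraically compact $\Rightarrow$ $\Sanpm$ compact $\Rightarrow$ $\Sanpm = \tilde S^\mx$ $\Rightarrow$ $S$ semialgebraically compact. For the middle implication: if $\Sanpm = \tilde S^\arch$ is compact, it is closed in the Hausdorff space $X_r$, hence closed in $\tilde S^\mx$; but $\tilde S^\arch$ is also dense in $\tilde S^\mx$ — indeed $\tilde S^\arch$ contains the dense subset $S$ by part (a) — so $\tilde S^\arch = \tilde S^\mx$. For $S$ semialgebraically compact $\Rightarrow \Sanpm$ compact: after passing to the completion (which by Proposition \ref{Completion does not matter} and the compatibility of $\sim$ and $\psi$ with base change does not change $\Sanpm$ up to homeomorphism), a semialgebraically compact $S$ is semialgebraically closed and bounded in some $\A^n$; boundedness means $S \subset \{|x_i| \le r\}$ for suitable $r \in K$, and I would run the properness argument of Lemma \ref{trop proper} in the relative setting: $\trop_{r,\scrF}$ restricted to $\Sanpm$ is proper, $\trop_{r,\scrF}(\Sanpm)$ is contained in a bounded box of $\R^n$ and is closed (it is a closed subset of $\trop_{r,\scrF}(\Xanpm)$, which is closed by Lemma \ref{trop proper}, intersected with the box), hence compact, and then $\Sanpm$ itself is compact by properness. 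For the reverse implication $\Sanpm = \tilde S^\mx \Rightarrow S$ semialgebraically compact: if $S$ were not semialgebraically compact, then (by the Tarski–Seidenberg/transfer principle) either $S$ is semialgebraically unbounded or not semialgebraically closed; in the first case $\tilde S$ contains a point whose residue field is not relatively archimedean over $K$ — concretely a point "at infinity" — which lies in $\tilde S^\mx \setminus \tilde S^\arch$; in the second case $\overline{\tilde S} \setminus \tilde S$ is nonempty and contains a closed point of $X_r$, again producing a point of $\tilde S^\mx \setminus \tilde S^\arch$ after a small argument (a closed point of $X_r$ in $\overline{\tilde S}$... actually need it inside $\tilde S^\mx$, i.e. inside $\tilde S$ — so the correct source of the contradiction here is: a non-closed semialgebraic set has, in $\tilde S$, a point that is closed in $X_r$ but whose residue field is not relatively archimedean, coming from an "adherent curve germ" escaping to a boundary value in $K^\times$ with negative or zero valuation). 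This is the delicate point and I would make it precise using the curve selection lemma in the real spectrum: if $S$ is not semialgebraically compact there is a semialgebraic path $\gamma\colon (0,1] \to S$ whose limit does not exist in $S$, and the corresponding point of $\Sper A$ lying in $\overline{S} \cap \tilde S^\mx$ has a residue field containing an element of $K(p)$ that dominates all of $K$, i.e. is not relatively archimedean.

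The main obstacle will be exactly this last implication, $\Sanpm = \tilde S^\mx \Rightarrow S$ semialgebraically compact — producing, from failure of semialgebraic compactness, an honest point of $\tilde S$ that is closed in $X_r$ but not relatively archimedean over $K$. The unbounded case is easy (evaluate a coordinate that is unbounded on $S$; its image under $\trop_{r,\scrF}$ is an unbounded set, so by Lemma \ref{trop proper} there is no way $\Sanpm$, hence no way $\tilde S^\mx = \tilde S^\arch$, can be compact — but we need a point, which one gets from the closure of $\tilde S$ in the proper model). The "not closed" case genuinely needs the curve selection lemma and a careful comparison of convex subrings as in the proof of Proposition \ref{Xan to Xarch}: along the escaping curve one builds a convex subring $B$ of the relevant residue field with $K \subset B$, witnessing non-archimedean-ness. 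Everything else is bookkeeping with the already-established homeomorphism $\psi$ and compatibility with boolean operations, closure, and interior.
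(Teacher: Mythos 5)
The delicate step you yourself flag is where the proposal breaks: throughout, you read $\tilde S^\mx$ as ``closed points of $X_r$ lying in $\tilde S$,'' but for the proposition (and for the paper's proof) $\tilde S^\mx$ must be the set of points that are closed \emph{in the subspace} $\tilde S$, and these are in general not closed in $X_r$. Concretely, for $S=\{x\in K: 0<x<1\}$ the point $\alpha\in\Sper K[T]$ given by the ordering of $K(T)$ in which $T$ is positive but smaller than every positive element of $K$ lies in $\tilde S$, is closed in $\tilde S$ (its only proper specialization has support $\langle T\rangle$ and falls outside $\tilde S$), is \emph{not} closed in $X_r$, and is not relatively archimedean, since $1/T$ dominates $K$. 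This is exactly the witness needed for the implication $\Sanpm=\tilde S^\mx\Rightarrow S$ semialgebraically compact in the bounded-but-not-closed case, and it is what the paper's one-line argument (``a semialgebraic curve in $S$ without endpoint in $S$ gives a point of $\tilde S^\mx\setminus\tilde S^\arch$'') encodes. The witness you propose instead --- ``a point of $\tilde S$ that is closed in $X_r$ but not relatively archimedean'' --- cannot exist when $S$ is bounded: if $\beta\in\tilde S$ is closed in $X_r$, then $k(\beta)$ is the convex hull of the image of $A$, and boundedness of the coordinate functions by elements of $K$ then forces $k(\beta)$ to be relatively archimedean over $K$. Indeed, under your reading of $\tilde S^\mx$ part (b) would simply be false: for the open bounded interval above one gets $\tilde S\cap X_r^\mx=\tilde S^\arch$ although $S$ is not semialgebraically compact. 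So the contrapositive step needs to be redone with the correct notion of maximal point of $\tilde S$, using that the missing endpoint (the specialization of the curve-germ point) lies outside $\tilde S$.

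The rest of the proposal is essentially sound, partly by a different route than the paper. For the forward direction the paper argues via specialization theory: semialgebraic compactness bounds every $f\in K[X]$ on $S$, so by \cite[III.7]{ks} every point of $\tilde S$ specializes into $\tilde S^\arch$, and points of $\tilde S^\mx$ are then already in $\tilde S^\arch$; your route via compactness of $\Sanpm$ (boundedness plus properness of $\trop_{r,\scrF}$, then ``a compact dense subset of the Hausdorff space $\tilde S^\mx$ is everything'') also works, provided you argue with the preimage of a compact box under the proper map and closedness of $\Sanpm$ in $\Xanpm$, rather than the unjustified claim that $\trop_{r,\scrF}(\Sanpm)$ is closed, and provided you use Hausdorffness of $\tilde S^\mx$ (or $X_r^\mx$) rather than of $X_r$, which is not Hausdorff. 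Part (a) is fine once one notes that density of $S$ in $\tilde S$ together with $S\subset\tilde S^\mx$ already gives density of $S$ in $\tilde S^\mx$; the ``closed in $X_r$'' misreading sneaks into your justification there as well, but it is not needed for that argument.
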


\begin{proof}
(a)
The inclusions are clear, and $S$ is well known to be dense in
$\tilde S$. For (b) we have to show: $\tilde S^\mx\subset\tilde S^\arch$
$\iff$ $S$ is semialgebraically (s.a.) compact. This is well-known,
but for lack of a suitable reference we indicate a proof: If $S$
is s.a.\ compact, then for every $f\in K[X]$ there is $c\in K$ with
$f<c$ on $S$. So from \cite[III.7]{ks} we see that every point in
$\tilde S$ specializes to a point in $\tilde S^\arch$. Conversely,
if $S$ is not s.a.\ compact, there exists a semialgebraic curve in
$S$ without an endpoint in $S$. Associated with it is a point in
$\tilde S^\mx\setminus\tilde S^\arch$.
\end{proof}

It is easy to see that the spaces $\Sanpm$ are functorial with respect to
morphisms of $K$-varieties. In fact they are functorial
also with respect to semialgebraic maps. Recall that a map
$\phi\colon S\to N$ (between semialgebraic subsets of affine
$K$-varieties) is called semialgebraic if $\phi$ is continuous and
its graph is a semialgebraic subset of $S\times N$.

\begin{lem}\Label{safunct}
Let $\phi\colon S\to N$ be a semialgebraic map. There is a unique
continuous map $\phi_*\colon\Sanpm\to N_r^{\an}$ making the diagram
$$\begin{tikzcd}
\Sanpm \arrow[r,"\phi_*"] \arrow[d,swap,"\psi"] &
  N_r^{\an} \arrow[d,"\psi"] \\
\tilde S \arrow[r,"\tilde\phi"] & \tilde N
\end{tikzcd}$$
commute, where $\tilde\phi$ is the map induced by $\phi$ in the
real spectrum.
\end{lem}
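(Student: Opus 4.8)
The plan is to reduce the construction of $\phi_*$ to a statement about real spectra, where the graph condition can be exploited directly, and then to read off continuity from the topology of $\Manpm$ and $N_r^{\an}$. First I would use the identification $\psi\colon\Sanpm\xrightarrow{\sim}\tilde S^\arch$ of Theorem \ref{xanpm=xrarch} (valid since we are in the non-trivially valued setting), and similarly for $N$; this turns the desired diagram into the assertion that $\tilde\phi\colon\tilde S\to\tilde N$ restricts to a map $\tilde S^\arch\to\tilde N^\arch$, and that this restriction is continuous for the subspace topologies coming from $X_r$ and (the ambient variety of) $N$. Uniqueness of $\phi_*$ is then immediate: $\psi$ is injective on $\Sanpm$, so $\phi_*$ is forced to be $\psi^{-1}\comp\tilde\phi\comp\psi$ once we know this composite lands in $N_r^{\an}$.

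The two things to check are therefore: (i) $\tilde\phi$ maps relatively archimedean points to relatively archimedean points, and (ii) continuity. For (i), recall that $\tilde\phi$ is defined via the graph: if $\Gamma\subset S\times N$ is the semialgebraic graph of $\phi$, then $\tilde\Gamma\subset\widetilde{S\times N}=\tilde S\times_{?}\tilde N$ is still the graph of a map $\tilde S\to\tilde N$ on real spectra, because $\phi$ being a function (total and single-valued) is a first-order condition preserved under passing to the real spectrum (equivalently, it holds after base change to any real closed extension field, by Tarski transfer). Concretely, a point $x\in\tilde S$ with support $p_x$ and ordered residue field $(K(p_x),P_x)$ is sent to the point $\tilde\phi(x)$ whose coordinates are $\phi$ evaluated at $x$ in the real closure $L$ of $(K(p_x),P_x)$ — this is where one uses that $\phi$ is semialgebraic, so that "$\phi(x)$" makes sense as an $L$-point of $N$ cut out by the graph equations. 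The residue field of $\tilde\phi(x)$ is then a subfield of $L$ containing $K$, and since $x$ relatively archimedean over $K$ means exactly that $K(p_x)$, hence its real closure $L$, is relatively archimedean over $K$, any subfield of $L$ containing $K$ is a fortiori relatively archimedean over $K$. Hence $\tilde\phi(x)\in\tilde N^\arch$, which is (i).

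For (ii), continuity of $\phi_*$: it suffices to check that for every regular function $g$ on an open affine $V$ of the ambient variety of $N$, the composite $\supp^{-1}(V)\to\R$, $x\mapsto|g|^\sgn_{\phi_*(x)}$, is continuous, and likewise that $\supp\comp\phi_*$ is continuous into $N$. Since $\phi$ is semialgebraic, on each piece of a semialgebraic stratification of $S$ adapted to $\phi$ the function $g\comp\phi$ agrees with a regular function (or a rational function whose denominator is a sign-definite regular function) on that piece; tracking this through the definition of the topology on $\Manpm$ — whose subbasic opens are of the form $\{x:|h|^\sgn_x>c\}$ for $h$ regular — shows that the preimages of subbasic opens of $N_r^{\an}$ are finite boolean combinations of subbasic opens of $\Sanpm$ intersected with the constructible set $\tilde S$, hence open in $\Sanpm$. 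The main obstacle is this continuity check, and more precisely making rigorous the passage from "$\phi$ semialgebraic" to "$g\comp\phi$ is piecewise regular in a way compatible with the $\Manpm$-topology": one must handle the fact that $\phi$ need not be polynomial on all of $S$ and that the stratification interacts with supports of seminorms. I expect this to go through using a semialgebraic stratification of $S$ on which $\phi$ is given by (restrictions of) regular maps — which exists by standard semialgebraic cell decomposition and Tarski transfer — together with the observation that taking $\Sanpm$ commutes with finite boolean combinations, as recorded just before Proposition \ref{mkdense}.
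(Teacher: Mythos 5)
Your overall architecture is the same as the paper's: identify $\Sanpm$ with $\tilde S^\arch$ and $N_r^{\an}$ with $\tilde N^\arch$ via the homeomorphism $\psi$ of Theorem \ref{xanpm=xrarch}, check that $\tilde\phi$ carries relatively archimedean points to relatively archimedean points, and deduce existence and uniqueness of $\phi_*$. Your argument for the archimedean part is essentially the paper's (which phrases it as: $\phi$ induces an embedding of the ordered residue field of $\tilde\phi(x)$ into that of $x$); your variant through the real closure $L$ of $(K(p_x),P_x)$ is fine, provided you add the easy remark that the real closure of a relatively archimedean ordered extension of $K$ is again relatively archimedean (Cauchy-type bounds on roots). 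Your uniqueness argument via injectivity of $\psi$ on the $N$-side is valid and slightly more direct than the paper's (which uses density of $S$ in $\Sanpm$ and Hausdorffness of $N_r^{\an}$).

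The genuine gap is in your continuity check (ii). The key claim there --- that after a semialgebraic stratification of $S$ the map $\phi$ (hence $g\circ\phi$) agrees piecewise with regular functions, or rational functions with sign-definite denominators --- is false: semialgebraic maps are only piecewise \emph{algebraic}, not piecewise rational. For instance $x\mapsto\sqrt{x}$ on $K_{\ge0}$, or the real cube root on $K$, coincides with no rational function on any nonempty open semialgebraic set, and cell decomposition plus Tarski transfer does not repair this. (Moreover, ``finite boolean combinations of subbasic opens'' need not be open, so even granting the claim the conclusion would not follow as stated.) The step is also unnecessary: since $\psi$ is a homeomorphism onto its image, continuity of $\phi_*$ is exactly continuity of the restriction of $\tilde\phi$ to $\tilde S^\arch$, and continuity of $\tilde\phi\colon\tilde S\to\tilde N$ is a standard fact about real spectra (see \cite[Prop.~7.2.8]{bcr}); if you prefer to verify it directly, check it on subbasic opens: $\tilde\phi^{-1}(\{y:\>g>_y0\})=\widetilde{\{\xi\in S:\>g(\phi(\xi))>0\}}$, and $\{g\circ\phi>0\}$ is an open semialgebraic subset of $S$ because $g\circ\phi$ is continuous semialgebraic, so its tilde is open in $\tilde S$ (compatibility of $S\mapsto\tilde S$ with boolean operations and interiors). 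With that replacement your proof collapses to the paper's.
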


\begin{proof}
The map $\tilde\phi$ satisfies $\tilde\phi(\tilde S^\arch)\subset
\tilde N^\arch$, since for every $x\in\tilde S$, $\phi$ induces an
embedding $K(\tilde\phi(x))\to K(x)$ of the ordered residue fields.
Therefore the restriction $\tilde S^\arch\to\tilde N^\arch$ of
$\tilde\phi$ has the desired properties, upon making the
identifications $\Sanpm=\tilde S^\arch$ and $N_r^{\an}=
\tilde N^\arch$. Uniqueness is clear since $N_r^{\an}$ is Hausdorff
and $S$ is dense in $\Sanpm$.
\end{proof}

\begin{thm}\Label{thm:connected}
Let $X$ be an affine $K$-variety, let $S\subset X(K)$ be a
semialgebraic set, with corresponding constructible subset $\tilde S$
of $X_r$. The map $\psi \colon \Xanpm\to X_r$ induces a bijection between the
connected components of $\tilde S$ and those of $\Sanpm$: For any
connected component $C$ of $\tilde S$, the preimage of $C$ in
$\Sanpm$ is connected.
\end{thm}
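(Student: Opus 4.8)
The plan is to reduce everything to a statement about the real spectrum, then to prove that statement about constructible sets. Since $\psi\colon\Xanpm\to X_r$ is a homeomorphism onto its image $X_r^\arch$ (Theorem \ref{xanpm=xrarch}), we may identify $\Sanpm$ with $\tilde S^\arch$, and the map in question is just the inclusion $\tilde S^\arch\into\tilde S$. So we must show: for every connected component $C$ of $\tilde S$, the intersection $C\cap\tilde S^\arch$ is nonempty and connected, and distinct components give disjoint such sets (the last being automatic). Nonemptiness follows from Proposition \ref{mkdense}(a): $S$ is dense in $\tilde S^\mx\supset$ the closed points, and more to the point every point of $\tilde S$ specializes within $\tilde S$ to a closed point, which lies in $\tilde S^\arch$ (here one uses that $S\subset X(K)$, so $\tilde S$ is contained in the relatively archimedean locus's closure appropriately — more precisely, $X(K)\subset X_r^\arch$ and $X(K)$ is dense in each component of $\tilde S$, since $S$ is dense in $\tilde S$). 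Thus each component of $\tilde S$ meets $X(K)$, hence meets $\tilde S^\arch$.

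The heart of the matter is connectedness of the preimage. First I would reduce to $\tilde S$ itself being connected. So assume $\tilde S$ is connected and suppose $\tilde S^\arch = V_1\sqcup V_2$ with $V_1,V_2$ nonempty, open in $\tilde S^\arch$, and closed in $\tilde S^\arch$. The key tool is the specialization structure of the real spectrum: every $x\in\tilde S$ has a unique closed generization pattern, and the set of points specializing to a fixed closed point $\xi$ is a chain. I would use the "retraction" onto closed points: there is a well-defined map sending $x\in\tilde S$ to the set of closed points of $\tilde S$ in $\overline{\{x\}}$, and by \cite{ks} III.7 this closed set is connected — in fact, for constructible sets in the real spectrum, $\overline{\{x\}}\cap X_r^\mx$ is connected (it is the real-spectrum analogue of a "half-branch" picture). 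Then define $U_i = \{x\in\tilde S : \overline{\{x\}}\cap\tilde S^\arch\subset V_i\}$ for $i=1,2$. Since every $x$ specializes into $\tilde S^\arch$ and the specialization set is connected, it lands entirely in one $V_i$, so $\tilde S = U_1\sqcup U_2$. One then checks $U_i$ is open: if $x\in U_1$, a constructible neighborhood of $x$ together with closedness of $V_2$ inside $\tilde S^\arch$ shows nearby points also specialize into $V_1$. This contradicts connectedness of $\tilde S$, so $\tilde S^\arch$ is connected.

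The main obstacle will be making the specialization argument rigorous — specifically, establishing that for a point $x$ of a constructible set $\tilde S\subset X_r$, the set of closed points of $\tilde S$ in $\overline{\{x\}}$ is connected, and that the assignment $x\mapsto$ (this set) is continuous in the appropriate sense to transfer a separation of $\tilde S^\arch$ back to a separation of $\tilde S$. For curves this is the classical fact about half-branches; in general one should invoke the structure theory of \cite{ks} III.7 on specializations in the real spectrum, or alternatively pass to a semialgebraic triangulation of $S$ and argue combinatorially on simplices (each open simplex of a triangulation contributes a single point to $\tilde S$ at the "generic" level, with the face structure governing specialization, so connectedness of $\tilde S^\arch$ mirrors connectedness of the closed simplicial complex, which mirrors that of $\tilde S$). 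I would present the triangulation approach as the cleanest route: triangulate $S$ compatibly with its semialgebraic description, observe that the connected components of $\tilde S$, of $S$, and of $\Sanpm$ all correspond to connected components of the associated simplicial complex, and conclude.
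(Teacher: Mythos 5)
Your reduction to ``$\tilde S$ connected $\Rightarrow$ $\tilde S^\arch$ connected'' and the nonemptiness argument via density of $K$-rational points are fine, but the central specialization argument has a genuine gap: it rests on the claim that every $x\in\tilde S$ specializes into $\tilde S^\arch$, and this is false in general. By Proposition \ref{mkdense}(b), $\tilde S^\arch=\tilde S^\mx$ holds precisely when $S$ is semialgebraically compact; when $S$ is not, $\tilde S$ contains closed points with no relatively archimedean specialization at all. Concretely, take $S=K$, so $\tilde S=\Sper K[T]$: the point $P_\infty$ (where $T$ exceeds every element of $K$) is closed in $X_r$, is not relatively archimedean, and $\ol{\{P_\infty\}}\cap X_r^\arch=\emptyset$. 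For such points your sets $U_1,U_2$ are both satisfied vacuously, so $\tilde S=U_1\sqcup U_2$ fails and no separation of $\tilde S$ is produced. Since the theorem's most basic instance $S=X(K)$ is typically not semialgebraically compact, this is not a boundary case but the heart of the matter. (In the compact case your argument can be made to work --- the retraction onto closed points is continuous there --- but that is exactly the case where there is nothing new beyond $\tilde S^\arch=\tilde S^\mx$.) The triangulation fallback does not repair this: an open simplex $\sigma$ of positive dimension gives a constructible set $\tilde\sigma$ with many points, not ``a single generic point,'' and the connectedness of $\tilde\sigma^\arch$ for a single (non-closed, or unbounded after the ambient embedding) simplex is essentially the statement you are trying to prove, so the sketch is circular at that point; the gluing of the archimedean parts across faces is also not addressed.

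The paper avoids this by trading the global specialization picture for a one-dimensional compact object: if open sets $U_1,U_2$ separated $\tilde S^\arch$, pick $K$-rational points $P_i\in S\cap U_i$, join them by a semialgebraic path $\gamma\colon J=[0,1]_K\to S$, and use functoriality of $(\,\cdot\,)_r^{\an}$ for semialgebraic maps (Lemma \ref{safunct}) to pull the separation back to $J_r^{\an}$; since $J$ is semialgebraically compact, $J_r^{\an}=J_r^\mx$ is connected, a contradiction. In other words, the compactness your argument needs is arranged by passing to the interval, where Proposition \ref{mkdense}(b) applies. To salvage your approach you would need either this path-reduction or some other device (e.g.\ a compactification/exhaustion argument controlling the points of $\tilde S^\mx\setminus\tilde S^\arch$), plus a precise reference or proof for the continuity of the specialization retraction you invoke.
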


Note that in turn, the connected components of $\tilde S$ are in
natural bijection with the semialgebraic connected components
of~$S$.

\begin{proof}
It suffices to show that when $S$ is semialgebraically connected,
then $\Sanpm \cong S^\arch$ is connected as well. Let
$U_i\subset X_r$ ($i=1,2$) be open subsets such that
$\tilde S^\arch\subset U_1\cup U_2$ and $\tilde S^\arch\cap
U_1\cap U_2=\emptyset$, and assume $\tilde S^\arch\cap U_i
\ne\emptyset$ for $i=1,2$. Then for $i=1,2$ there exists a
$K$-rational point $P_i$ in $S\cap U_i$. Let $J=[0,1]_K$. Since $S$
is semialgebraically path-connected, there exists a semialgebraic
path $\gamma\colon J\to S$ from $P_1$ to $P_2$. Using Lemma
\ref{safunct}, it follows that $J_r^{\an}$ is disconnected. But $J$
is semialgebraically compact, and so $J_r^{\an}=J_r^\mx$ is
connected as well, a~contradiction.
\end{proof}


\section{Hyperfields and Tropicalization of Inequalities}
\Label{sec:hyperfields}
\subsection{Hyperfields}
\label{sec:hyperfield}

A hyperfield/hyperring $\mathbb{H}$ is a set with a multiplication $\cdot$ and addition $\oplus$, where
addition may be multivalued, that satisfies a set of axioms similar to those for a field/ring.
We first recall some useful hyperfields as introduced by
Viro in \cite{Viro}, where definitions can be found.

\begin{itemize}
\item
The hyperfield of signs $\mathbb{S}$ has multiplicative
group  $(\{\pm 1\}, \cdot)$ with addition where $0$ is the neutral element and $1\oplus 1 = 1, -1 \oplus -1 = -1$ and $-1 \oplus 1 = \mathbb{S}$.

\item
The tropical hyperfield $\T$, with multiplicative notation, has as
multiplicative group $(\R_{> 0},\cdot)$, and the addition is defined by
\begin{align*}
a \oplus b =
\begin{cases}
\max (a,b) \text{ if } a \neq b\\
[0, a] \text{ if } a = b.
\end{cases}
\end{align*}

\item
The real tropical hyperfield $\R\T$ has multiplicative group $(\R^*, \cdot)$ with addition
\begin{align*}
a \oplus b =
\begin{cases}
a \text{ if } \vert a \vert > \vert b \vert  \\
b \text{ if } \vert a \vert < \vert b \vert  \\
a \text{ if } a = b \\
[a, b] \text{ if } a = -b \leq 0 \\
[b, a] \text{ if } a = -b \geq 0.
\end{cases}
\end{align*}
\end{itemize}

A morphism of hyperfields $\varphi \colon \mathbb{H}_1 \to
\mathbb{H}_2$ is a map which induces a morphism of multiplicative groups and satisfies $\varphi(x \oplus y) \subset \varphi(x) \oplus \varphi(y)$.
It is a direct consequence of the definitions that a
seminorm on a ring $A$ is the same as a morphism of hyperrings
$A \to \T$.
Similarly, a signed seminorm on $A$ is an morphism $A \to \R\T$
and an ordering on $A$ is a morphism $A \to \mathbb{S}$.

Let $\mathbb{H}$ be a hyperfield with a topology and
let $K$ be a field with a morphism $K \to \mathbb{H}$.
Let $A$ be a $K$-algebra.
Denote by $\Hom_K(A, \mathbb{H})$ the set of homomorphisms of hyperrings $\varphi \colon A \to \mathbb{H}$
which extend the given morphism on $K$.
We endow it with the coarsest topology such that for all $f \in A$ the map 
$\Hom_K(A, \mathbb{H}) \to \mathbb{H}, \varphi \mapsto \varphi(f)$ is continuous.

Jun showed that we have canonical homeomorphisms 
\begin{align*}
\Hom_K(A, \T) = \mathcal{M}(A) \quad  \text{and} \quad  \Hom_K(A, \mathbb{S}) = \Sper(A),
\end{align*}
when $\T$ has the topology of $\R_{\geq 0}$ and $\mathbb{S}$ has  the topology of $\R / \R_{>0}$
\cite[Proposition 5.8 \& Lemma 6.11]{Jun}.

From this point of view, one can furthermore canonically see the tropicalization map.
Given a family $\scrF = (f_1,\dots,f_n)$ we have
\begin{align*}
\trop_{\scrF} \colon \Hom_K(A, \T) &\to \T^n, \\
\varphi &\mapsto (\varphi(f_1),\dots,\varphi(f_n)).
\end{align*}

Our definition of $\Xanpm$ is inspired by Jun's work, and we have
\begin{align*}
\mathcal{M}_r(A) = \Hom_K(A, \R \T) \quad \text{and} \quad
\trop_{r,\scrF}\colon \Hom_K(A, \R \T) &\to \R\T^n, \\
\varphi &\mapsto   (\varphi(f_1),\dots,\varphi(f_n)).
\end{align*}

Jun also globalizes these constructions and defines the set of $\mathbb{H}$-rational points $X_K(\mathbb{H})$ for not
necessarily affine $K$-varieties $X$.
He also shows the analogous results
\begin{align*}
\Xan = X_K(\T) \text{ and } X_r = X_K(\mathbb{S}).
\end{align*}
Similarly one can show
\begin{align*}
\Xanpm = X_K(\R\T).
\end{align*}

\subsection{Real Tropicalization of \texorpdfstring{$K^n$}{Kn} and Tropicalizing Polynomial Inequalities}
\Label{sec:ineq}

For a hyperfield $\mathbb{H}$, a polynomial in the variables $x_1,\dots,x_n$ over $\mathbb{H}$ is a formal expression
$F = \bigoplus_{d_1,\dots,d_n \in \Z} a_{d_1,\dots,d_n} x_1^{d_1} \cdots x_n^{d_n}$ 
where all but finitely many coefficients $a_{d_1,\dots,d_n}$ are zero.
For $c = (c_1,\dots,c_n) \in \mathbb{H}^n$ we have 
$F(c) = \bigoplus_{d_1,\dots,d_n} a_{d_i,\dots,d_n} c_1^{d_i} \cdots c_n^{d_n} \subset \mathbb{H}$.

Given a tropical polynomial $F$ in $n$ variables over the real tropical
hyperfield $\R\T$, we define the tropical
semialgebraic sets in $\R\T^n$ defined by $F = 0$, $F \geq 0$, and $F
> 0$ to be, respectively, the
sets
\begin{align*}
\{F = 0\} &:=  \{(x_1,\dots,x_n)\in\R\T^n : F(x_1,\dots,x_n) \ni 0 \} \\
\{F \geq 0\} &:=  \{(x_1,\dots,x_n)\in\R\T^n : F(x_1,\dots,x_n) \text{ contains a  nonnegative number}\} \\
\{F > 0\} &:=  \{(x_1,\dots,x_n)\in\R\T^n : F(x_1,\dots,x_n) \text{ is a positive number}\}.
\end{align*}
Recall that the tropical addition may be multivalued, and
$F(x_1,\dots,x_n)$ is either a point or an interval of the form $[-a,
a]$.

For a polynomial $f = \sum a_{d_1,\dots,d_n} x_1^{d_1} \dots x_n^{d_n} \in K[x_1,\dots,x_n]$, let $\trop_r(f)$ be the
polynomial $\bigoplus \vert a_{d_1,\dots,d_n} \vert^\sgn x_1^{d_1} \dots x_n^{d_n}$ over $\R\T$ obtained by replacing $+$ with $\oplus$ and
each coefficient with its signed seminorm.  
For any $a \in K^n$, we
have $|f(a)|^\sgn \in \trop_r(f)(|a|^\sgn)$ because
the signed seminorm is a morphism of hyperfields from $K$ to $\R\T$.
In particular, if a point $a \in K^n$ satisfies $f \geq 0$, then $|a|^\sgn$
satisfies $\trop_r(f)(|a|^\sgn) \geq 0$.  On the other hand, if a point $a \in
K^n$ satisfies $\trop_r(f)(|a|^\sgn) > 0$,
then $f(a) > 0$.

In general, even if $K \to \R\T$ is surjective, it is not the case that
\begin{align*}
\trop_r\left(\{x \in K^n : f(x) \geq 0\}\right) = \{\trop_r(f)(x)\geq 0\}.
\end{align*}
For example, let $S = \{(x,y) \in K^2 : (x-2)^2 + (y-2)^2 \leq 1\}$.
Then $\trop_r(S)$ consists of a single point $(1,1)$.  However
\[\trop_r( (x-2)^2 + (y-2)^2-1) = X^2 \oplus Y^2 \oplus -X \oplus -Y
\oplus 1 \leq 0\] cuts out two line segments which join $(1,1)$ to $(-1,1)$
and $(1,-1)$ respectively.
For more examples, see
Figures~\ref{fig:signedtrop},\ref{fig:rectangle}, and~\ref{fig:trop_intersection}.
We will see in Theorem~\ref{thm:fund} that tropicalizing {\em
  all} polynomial inequalities satisfied by a semialgebraic set $S$
cuts out $\trop_r{S}$, if $K \to \T\R$ is surjective.

\begin{figure}
\begin{tabular}{cc}
\begin{minipage}{0.5\textwidth}
\begin{center}
\includegraphics[width=0.8\linewidth]{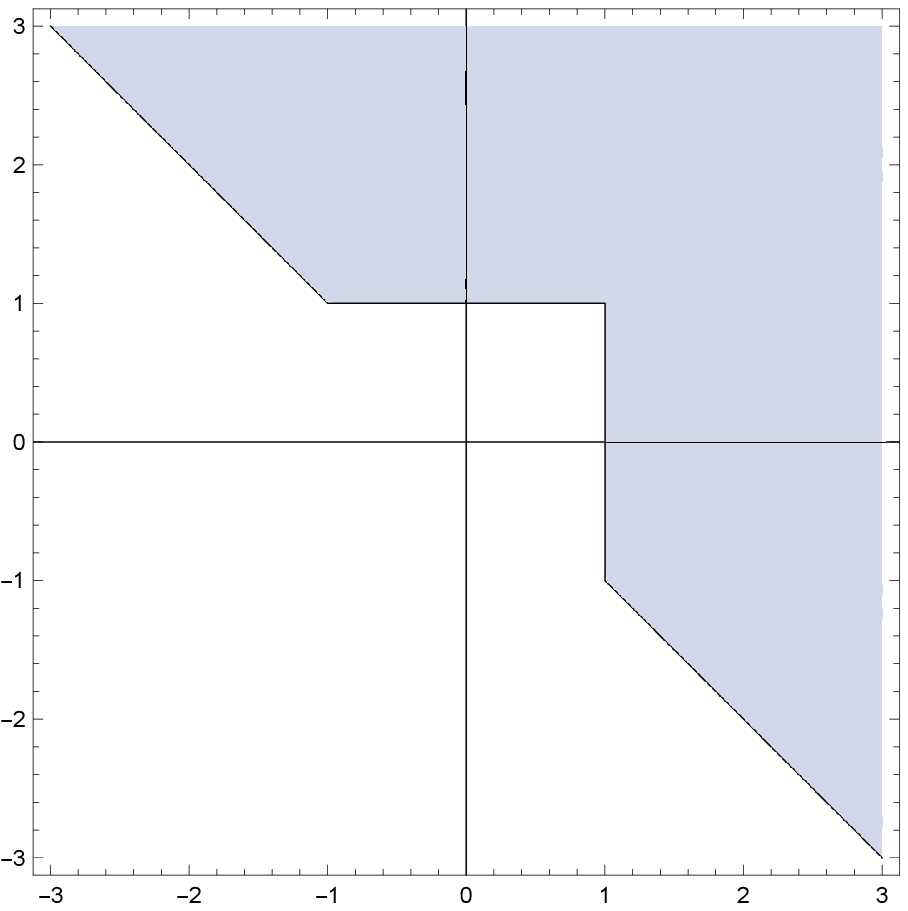}
\end{center}
The subset of $\R\T^2$ defined by \[X \oplus Y \oplus -1 \geq 0.\]  It
coincides with the real tropicalization of, e.g. \[\{(x,y) \in K^2 :
  2x+3y-5\geq 0\}.\]
\end{minipage} &
\begin{minipage}{0.5\textwidth}
\begin{center}
\includegraphics[width=0.8\linewidth]{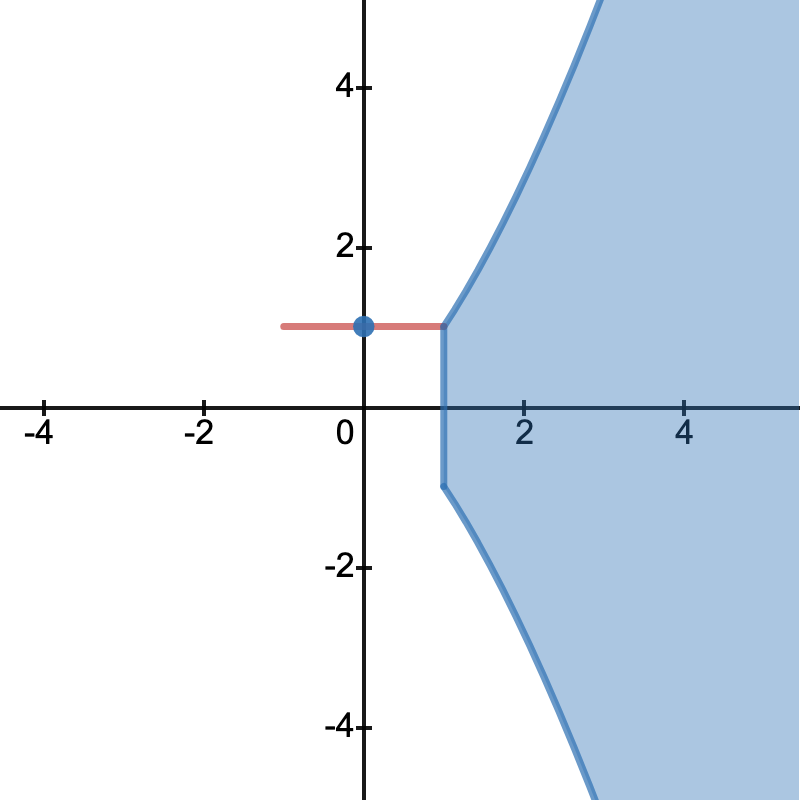}
\end{center}
The subset of $\R\T^2$ defined by \[X^3 \oplus Y \oplus -X^2 \oplus
-Y^2 \oplus -1 \geq 0.\]  The real tropicalization of
\[\{(x,y) \in K^2 : x^3+2y - x^2-y^2-1 \geq 0\}\] consists of the shaded region
and the point $(0,1)$, but not the red segment.  Compare with Figure~7 in~\cite{Ale}.
\end{minipage}
\end{tabular}
\caption{Real tropicalizations of sets and
  polynomial inequalities.}
\Label{fig:signedtrop}
\end{figure}

\begin{figure}
\begin{center}
\includegraphics[width=0.6\linewidth]{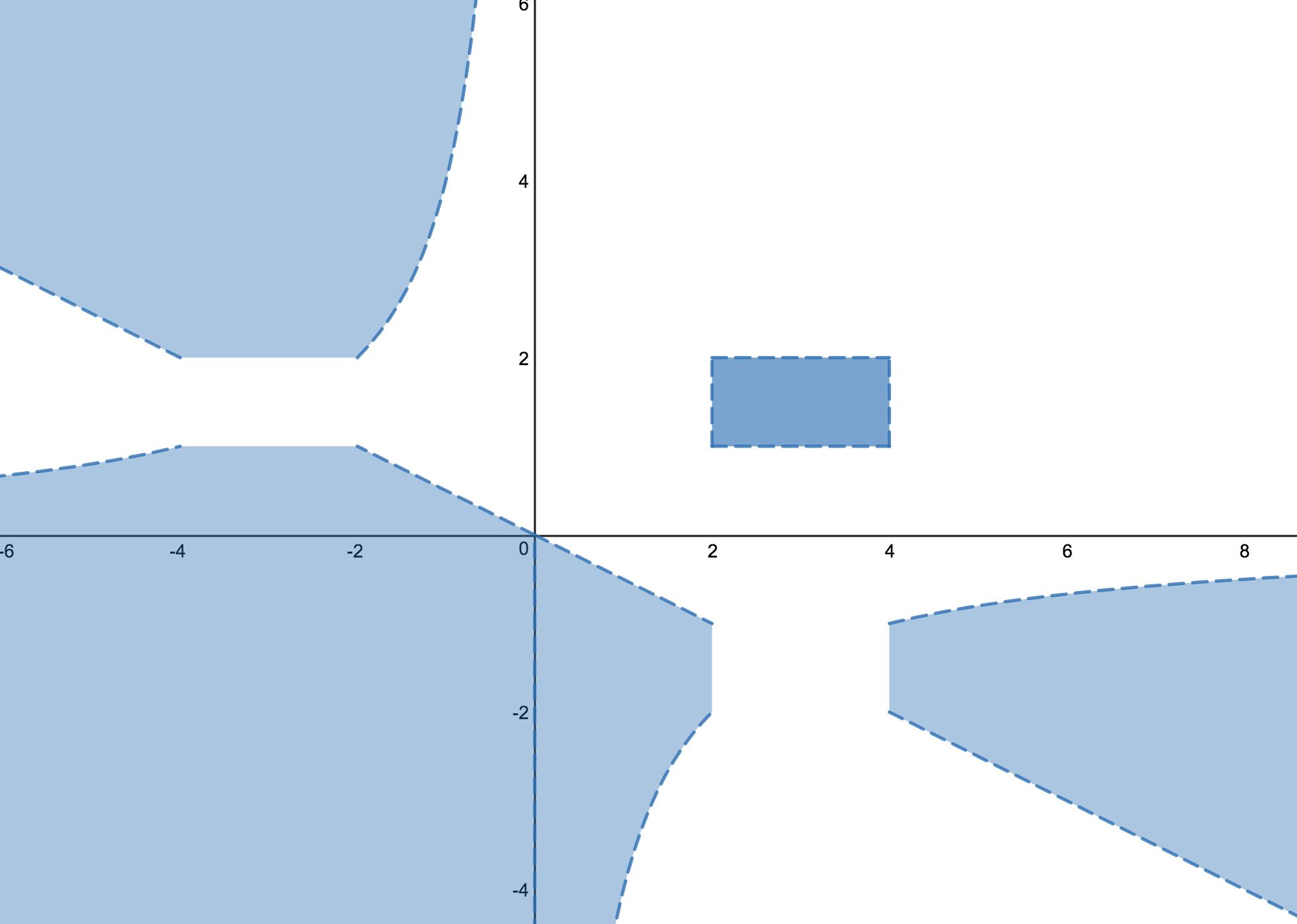}
\end{center}
\caption{The open subset of $\R^2$ defined by $$xy\left( -1 \oplus
    \frac{2}{x} \oplus \frac{x}{4} \oplus \frac{1}{y} \oplus
    \frac{y}{2} \right) < 0.$$ }
\Label{fig:rectangle}
\end{figure}


\section{The Fundamental Theorem and Limit of Tropicalizations}
\Label{sec:limit}
In this section we let $S \subset K^n$ be a semialgebraic set, with
Zariski closure $X$ in~$\A^n$.
We denote by $A$ the coordinate ring of $X$.

\subsection{Density}
\label{sec:density}

Assume that the absolute value on $K$ is non-trivial.

\begin{thm} [Weak Density] \Label{weak density}
Let $\scrF = (f_1,\dots,f_n)$ be a family of elements in~$A$.
Then $\trop_{r,\scrF}(S)$ is dense in
$\trop_{r,\scrF}(\Sanpm)$ .
\end{thm}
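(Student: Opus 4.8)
The plan is to reduce to the affine situation and use the known density of $S$ in $\Sanpm = \tilde S^{\arch}$ inside the real spectrum (Proposition~\ref{mkdense}(a)), together with continuity of the tropicalization map. First I would recall that $\trop_{r,\scrF}$ factors as $\Sanpm \xrightarrow{\psi} \tilde S \to \R^n$, where the second map sends an ordering $x$ to $(\sgn_x(f_i)\,|f_i|_x)_i$; but the cleanest route is to work directly with the continuous map $\trop_{r,\scrF} \colon \Sanpm \to \R^n$. Since $S$ is dense in $\Sanpm$ (as $S$ is dense in $\tilde S$ and $\Sanpm = \psi^{-1}(\tilde S)$ with $\psi$ a homeomorphism onto its image), and $\trop_{r,\scrF}$ is continuous, the image $\trop_{r,\scrF}(S)$ is dense in $\trop_{r,\scrF}(\Sanpm)$. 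Indeed, for any $x \in \Sanpm$ and any basic open neighborhood $V$ of $\trop_{r,\scrF}(x)$ in $\R^n$, the preimage $(\trop_{r,\scrF})^{-1}(V)$ is an open neighborhood of $x$ in $\Sanpm$, hence meets $S$, so $V$ meets $\trop_{r,\scrF}(S)$; this shows $\trop_{r,\scrF}(x) \in \ol{\trop_{r,\scrF}(S)}$.

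The only genuine point requiring care is the density of $S$ in $\Sanpm$. This is essentially Proposition~\ref{mkdense}(a): $S \subset \Sanpm = \tilde S^{\arch}$ and $S$ is dense in $\tilde S^{\mx} \supset \tilde S^{\arch}$, so a fortiori $S$ is dense in $\tilde S^{\arch}$ with its subspace topology. Under the homeomorphism $\psi$ of Theorem~\ref{forget real injective}, this transports to density of $S$ in $\Sanpm$. Here I use the running assumption that the absolute value on $K$ is non-trivial, which is exactly what makes $\psi$ a homeomorphism onto its image and makes $\Sanpm$ identifiable with $\tilde S^{\arch}$.

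I do not expect any real obstacle here: the statement is a soft consequence of "continuous image of a dense set is dense in the image," once the density of $K$-points in the real analytification is in hand. The one thing to be careful about is that the density assertion for $S$ in $\Sanpm$ presupposes the identification $\Sanpm = \tilde S^{\arch}$, which in turn rests on the non-triviality hypothesis on $\abs_K$; this should be flagged explicitly. No properness or compactness of $X$ is needed, and the family $\scrF$ is arbitrary (not required to generate $A$), so Lemma~\ref{trop proper} plays no role in this particular argument.
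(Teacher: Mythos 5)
Your proposal is correct and follows the same route as the paper: the paper's proof is exactly the one-line observation that $S$ is dense in $\Sanpm$ by Proposition~\ref{mkdense} and $\trop_{r,\scrF}$ is continuous, which you spell out in slightly more detail. Your remark about the non-triviality of $\abs_K$ being what underpins the identification $\Sanpm=\tilde S^{\arch}$ is consistent with the standing assumption in that section of the paper.
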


\begin{proof}
This follows since $S$ is dense in $\Sanpm$ (\ref{mkdense}) and
$\trop_{r,\scrF}$ is a continuous map.
\end{proof}

\begin{thm} [Strong Density] \Label{strong density}
Let $\scrF = (f_1,\dots,f_n)$ be a family of elements in $A$.
Then
\begin{align*}
\trop_{r,\scrF}(\Sanpm) \cap \left(\vert K \vert^\sgn_K\right)^m\> = \>
\trop_{r,\scrF}(S).
\end{align*}
\end{thm}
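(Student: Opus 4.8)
The inclusion $\trop_{r,\scrF}(S)\subset\trop_{r,\scrF}(\Sanpm)\cap(\vert K\vert_K^\sgn)^m$ is immediate: every $\xi\in S$ gives a point of $\Sanpm$, and its image has coordinates $\sgn(f_i(\xi))\vert f_i(\xi)\vert_K$, which lie in $\vert K\vert_K^\sgn$ by definition. So the plan is to prove the reverse inclusion. Fix $x\in\Sanpm$ with $\vert f_i\vert_x^\sgn\in\vert K\vert_K^\sgn$ for all $i$; we must produce a point $\xi\in S$ with $\sgn(f_i(\xi))\vert f_i(\xi)\vert_K=\vert f_i\vert_x^\sgn$ for every $i$. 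The idea is to work inside the real spectrum via the identification $\Sanpm=\tilde S^\arch$ (Theorem \ref{xanpm=xrarch}, Proposition \ref{mkdense}), and then exploit that $S$ is dense in $\tilde S$ together with the Harrison-topology description of the relevant sets.

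The key point is that for a value $\gamma\in\vert K\vert_K$, say $\gamma=\vert c\vert_K$ with $c\in K_{>0}$, the condition ``$\vert f_i\vert_x^\sgn=\varepsilon_i\gamma_i$'' (with $\varepsilon_i=\sgn_x(f_i)$) is \emph{not} an open condition, but ``$\varepsilon_i f_i$ lies strictly between $c_i'$ and $c_i''$'' for $c_i'<\gamma_i<c_i''$ in $\vert K\vert_K$ is open and semialgebraic. More precisely, I would use Lemma \ref{order implies abs}: for $x\in\tilde S^\arch$ with residue field $K(p_x)$, the absolute value $\vert f_i\vert_x$ is the supremum of $\vert r\vert_K$ over $r\in K_{\ge0}$ with $r\le_x\varepsilon_i f_i$, and because $\vert f_i\vert_x^\sgn\in\vert K\vert_K^\sgn$ this supremum is \emph{attained}: there is $c_i\in K_{\ge0}$ with $\vert c_i\vert_K=\vert f_i\vert_x$, and since $K$ is real closed one checks $\varepsilon_i f_i - c_i$ is infinitesimal relative to $K$, i.e.\ $\vert\varepsilon_i f_i - c_i\vert_K<\vert d\vert_K$ for every $d\in K$ with $\vert d\vert_K<1$ times any positive element — in fact $\varepsilon_i f_i=c_i(1+\delta_i)$ with $\delta_i$ of absolute value $<1$ relative to $K$ in the valuation on $K(p_x)$. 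The upshot is that $x$ lies in the closure (in $X_r$) of the constructible set
\[
T\>:=\>\tilde S\cap\bigcap_{i=1}^m\{\,y\in X_r:\ (1-\tfrac1k)c_i<_y\varepsilon_i f_i<(1+\tfrac1k)c_i\,\}
\]
for every $k$; equivalently $x$ is in the closure of $\tilde S\cap U_k$ where $U_k$ is the open semialgebraic set just described. Here the case $\varepsilon_i=0$ (i.e.\ $c_i=0$, $f_i\in\supp(x)$) is handled by replacing the two-sided bound with $\vert f_i\vert_y<\tfrac1k\cdot(\text{some small element of }K)$, i.e.\ $-\eta_k<_y f_i<\eta_k$ with $\eta_k\to0$ in $\vert K\vert_K$.

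Now $S$ is dense in $\tilde S$ (Proposition \ref{mkdense}(a)), and $U_k$ is open, so $S\cap U_k\ne\emptyset$ for every $k$; choose $\xi_k\in S\cap U_k$. Then $\trop_{r,\scrF}(\xi_k)$ has $i$-th coordinate in the interval $\varepsilon_i\big((1-\tfrac1k)\vert c_i\vert_K,(1+\tfrac1k)\vert c_i\vert_K\big)$ (resp.\ in $(-\vert\eta_k\vert_K,\vert\eta_k\vert_K)$ when $c_i=0$), so $\trop_{r,\scrF}(\xi_k)\to\trop_{r,\scrF}(x)$ in $\R^m$ as $k\to\infty$. This already shows $\trop_{r,\scrF}(x)$ is in the closure of $\trop_{r,\scrF}(S)$ — but that is only Weak Density again. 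To get the exact equality I would instead argue that $S\cap U_k$ already contains a point with the \emph{exact} tropicalization $\trop_{r,\scrF}(x)$: shrink $X$ to the fibre over $p_x$ and use that $K$ is real closed together with the intermediate value property. Concretely, pull back along a parametrizing semialgebraic curve: since $x\in\tilde S^\arch$ there is, by the density of $S$ in $\tilde S$ and the curve-selection lemma, a semialgebraic path $\gamma\colon(0,1]_K\to S$ (or an arc) with $\gamma$ ``converging to $x$'', meaning $\varepsilon_i f_i(\gamma(t))/c_i\to 1$ as $t\to0$ in the order topology of $K$; then by the intermediate value theorem applied to each $\varepsilon_i f_i(\gamma(t))-c_i$ (real closed field!) and a simultaneous argument, or more cleanly by lifting through the coordinate $f_1$ first and iterating, one finds $t_0$ with $f_i(\gamma(t_0))=c_i$ for all $i$ after a suitable reparametrization.

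\textbf{Main obstacle.} The delicate step is the last one: passing from ``$\trop_{r,\scrF}(x)$ is a limit of $\trop_{r,\scrF}$-values of points of $S$'' to ``$\trop_{r,\scrF}(x)$ \emph{is} a $\trop_{r,\scrF}$-value of a point of $S$'', i.e.\ hitting the target value exactly in all $m$ coordinates simultaneously. I expect this requires either (i) an induction on $m$, at each stage using the real-closedness of $K$ and the intermediate value theorem to adjust one coordinate to its exact value while keeping the already-fixed ones inside shrinking windows, or (ii) a compactness argument: the sets $\{\xi\in S: \trop_{r,\scrF}(\xi)=\trop_{r,\scrF}(x)\}$ are the fibres of $\trop_{r,\scrF}|_S$, and one shows the image $\trop_{r,\scrF}(S)$ is already closed inside $(\vert K\vert_K^\sgn)^m$ by transferring the properness of $\trop_{r,\scrF}$ on $\Sanpm$ (Lemma \ref{trop proper}, once $\scrF$ generates $A$) down to $S$ via model-completeness of real closed fields — realize the statement as a first-order property over the real closed field $K$ and over an elementary extension with value group $\R$, where $\Sanpm$ and its tropicalization become ``honest'' semialgebraic objects. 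I would pursue route (ii): rephrase the desired point $\xi\in S$ as a solution, over a real closed extension $K'$ of $K$ with $\vert K'\vert=\R$, of the semialgebraic system defining $S$ together with $f_i=c_i$, observe $x$ furnishes such a solution in $K'$, and then descend using that the $c_i$ already lie in $\vert K\vert_K$, so the solution set is a nonempty semialgebraic set defined over $K$ and hence has a $K$-point. That descent is exactly where ``$\cap(\vert K\vert_K^\sgn)^m$'' is used and is the crux of the argument.
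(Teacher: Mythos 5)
Your opening reduction (work in $X_r$ via $\Sanpm=\tilde S^\arch$, replace the non-open condition $\vert f_i\vert_x^\sgn=z_i$ by open semialgebraic ``windows'', then use density of $S$ in $\tilde S$) is in the right spirit, but the step that carries all the weight is unjustified and, as stated, false. You claim that because $\vert f_i\vert_x\in\vert K\vert_K$ one can pick $c_i\in K$ with $\varepsilon_if_i-c_i$ infinitesimal relative to $K$, i.e.\ $\varepsilon_if_i=c_i(1+\delta_i)$ with $\vert\delta_i\vert_x<1$, and hence that $x$ lies in the closure of $\tilde S\cap U_k$. This fails whenever the residue of $f_i$ at $x$ is not in (the image of) $k_K$: for instance at a Gauss-type point of $\A^1$ over $K=\R\{\!\{t\}\!\}$, ordered so that $T$ is positive with transcendental residue, one has $\vert T\vert_x=1\in\vert K\vert_K$ but $\vert T-c\vert_x\ge1$ for every $c\in K$, and $x$ does not lie in the closure of $\{(1-\tfrac1k)<T<(1+\tfrac1k)\}$. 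So the nonemptiness of $S\cap U_k$ --- which is really the entire content of the theorem --- is not established by your argument. (Incidentally, had you established it for a single standard window, say $k=2$, you would already be done: since nonzero rationals are units, $\tfrac12 c_i<\varepsilon_if_i(\xi)<\tfrac32 c_i$ forces $\vert f_i(\xi)\vert^\sgn_K=z_i$ exactly by order--valuation compatibility, so no limiting argument or ``main obstacle'' would remain.) The paper resolves precisely this point differently: after normalizing to $z=(1,\dots,1)$ it uses the Baer--Krull description (Theorem \ref{Baer-Krull-Theorem}) of the fibre of $\Xanpm\to\Xan$ over $\varphi(x)$, applies Lemma \ref{bounded lemma} to produce a \emph{different} ordering $Q$ of the residue field $k_{K(x)}$ in which the residues $\tilde f_i^{\pm1}$ are positive and bounded by some $\tilde C\in k_K$, lifts $\tilde C$ to a unit $C\in K$, and then finds a $K$-rational point in the nonempty open set $\{C^{-1}<f_i<C,\ h_j>0\}\subset X_r$; the sandwich with $\vert C\vert_K=1$ gives the exact equality. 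The need for a possibly non-standard unit $C$ and for changing the ordering on the residue field is exactly the phenomenon your infinitesimality claim overlooks.

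Your two proposed routes around the ``main obstacle'' also do not work as formulated. Route (i) asks for $t_0$ with $f_i(\gamma(t_0))=c_i$ for all $i$ simultaneously along a one-parameter curve; besides being generally unattainable for $m\ge2$, exact equalities $f_i(\xi)=c_i$ are the wrong target: for $S=\{x\in K:x^2=2\}$, $f_1=x$, no point of $S$ in any extension satisfies $f_1=c_1$ for a generic $c_1$ with $\vert c_1\vert^\sgn_K=\vert\sqrt2\vert^\sgn_K$, yet the theorem holds. Route (ii) founders on the same confusion: the system ``$\xi\in S$ and $f_i(\xi)=c_i$'' is semialgebraic over $K$ but is \emph{not} witnessed by $x$ (which only gives $\vert f_i\vert_x^\sgn=\vert c_i\vert_K^\sgn$), and the condition you actually need, $\vert f_i(\xi)\vert^\sgn_K=z_i$, is a valuation-theoretic condition not expressible in the ordered-field language, so Tarski/RCF model completeness does not apply. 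The legitimate model-theoretic shortcut is model completeness (quantifier elimination) for real closed \emph{valued} fields, i.e.\ Cherlin--Dickmann, which the paper explicitly mentions as an alternative and deliberately avoids; if you want to pursue that route you must set it up in the language of ordered valued fields, not of ordered fields.
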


This statement is true by model-theoretic reasons (Cherlin-Dickmann
quantifier elimination for real closed fields with compatible
valuations \cite{CDII}). We nevertheless give a proof with our methods.

\begin{proof}
The inclusion $\supset$ is immediate, so we need to prove the opposite inclusion.
After possibly shrinking $S$, we may assume it is basic semialgebraic, i.e.
defined by inequalities $g_1 \geq 0,\dots,g_s \geq 0$ and $h_1 >0,\dots,h_r > 0$.

Let $z \in \trop_{r,\scrF}(\Sanpm) \cap \left(\vert K \vert^\sgn_K\right)^m$.
Let $x = \abs^\sgn_x \in \Sanpm$ such that $\trop_{\scrF}(x) = z$.
After reordering we may assume
$\vert g_1 \vert_x = 0,\dots, \vert g_t \vert_x = 0, \vert g_{t+1} \vert_x > 0, \dots, \vert g_{s} \vert_x > 0$.
Replacing $\scrF$ by $(f_1,\dots,f_m,g_1,\dots,g_t)$,
$z = (z_1,\dots,z_n)$ by
$(z_1,\dots,z_n,\vert g_1 \vert_x^\sgn,\dots,\vert g_t\vert_x^\sgn)$,
and $h_1,\dots,h_r$ by $h_1,\dots,h_r,g_{t+1},\dots,g_s$ we may assume that all inequalities are strict.

We may also assume that none of the coordinates of $z$ are zero by replacing $X$ with the vanishing locus of $f_i$
if $z_i = 0$ and $S$ by its intersection with this set.
After possibly replacing $f_i$ by $ c_i \cdot f_i$, where $\vert c_i \vert_K^\sgn = z^{-1}_i$
we may assume that $z = (1,\dots,1)$.

Consider the map $\varphi \colon \Xanpm \to \Xan$.
Then
\begin{align*}
\varphi^{-1} \{\varphi(x)\} = D \times \Sper{k_{K(x)}}
\end{align*}
by the Baer-Krull theorem \ref{Baer-Krull-Theorem}, where  $D$ is a finite discrete set.
Let $d \in D$ such that $x \in \{d\} \times \Sper{k_{K(x)}}$ and denote $E := \{d \} \times \Sper{k_{K(x)}}$.

Let
\begin{align*}
U = \{ \abs^\sgn : \vert f_i \vert^\sgn > 0 \text{ and } \vert h_j \vert^\sgn > 0 \text{ for all } i,j \} \subset \Xanpm
\end{align*}
and $V = E \cap U$.
Then $V$ is an open subset of $\Sper{k_{K(x)}}$ that contains $x$, thus there exist
$\tilde c_1,\dots,\tilde c_k \in k_{K(x)}$ such that
\begin{align*}
x \in \{ P \in \Sper{k_{K(x)}} : \tilde c_j >_P 0 \text{ and }
  \tilde f_i >_P 0 \text{ for all } i,j \} \subset V
\end{align*}
By Lemma \ref{bounded lemma} below, there exists $Q \in \Sper{k_{K(x)}}$ and $\tilde C \in k_K$ such that
\begin{align*}
\tilde C >_{Q} \tilde c_j^{\pm 1} >_Q 0 \text{ and } \tilde C
  >_{Q} \tilde f_i^{\pm 1} >_Q 0 \text{ for all } i,j.
\end{align*}
Then $>_Q$ defines a point in $V$ and consequently a point $\abs^\sgn_Q$ in $\Xanpm \subset X_r$.
Let $C \in K$ such that $\vert C \vert = 1$ and the residue class of $C$ in $k_K$ is $\tilde C$. Then
\begin{align*}
\{ P \in X_r : C^{-1} <_P f_i <_P C \text{ and } h_j >_P 0 \text{ for all } i,j\}
\end{align*}
is an open subset of $X_r$ containing $\abs_Q^\sgn$.
Thus this set is non-empty and contains a rational point $w \in X(K)$.
Since
\begin{align*}
C^{-1} < f_i(w) < C \text{ we have } \vert f_i(w) \vert^\sgn_K = 1
\end{align*}
and since $h_j(w) > 0$
meaning $w \in S$ and $\trop_{\scrF}(w) = z$.
\end{proof}

We used the following lemma:

\begin{lem} \Label{bounded lemma}
Let $F$ be a real closed field and $E$ a finitely generated extension
of $F$. Let $f_1,\dots,f_n \in E$. Assuming that there exists an order
of $E$ where all the $f_i$ are positive, there also exists an order $P$
of $E$ where all the $f_i$ are positive and bounded over $F$ (in the
sense that there exists $c \in F$ such that $c>_Pf_i$ for all $i$).
\end{lem}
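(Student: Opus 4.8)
The plan is to reduce to the case of a single element by the usual trick: set $f := f_1^2+\cdots+f_n^2$ and then $g := \max(f, f_1^{-2},\dots,f_n^{-2})$; bounding $g$ above over $F$ simultaneously bounds each $f_i$ and each $f_i^{-1}$ (hence $f_i$ away from $0$), provided the $f_i$ remain positive. So it suffices to produce an order $P$ of $E$ in which $g$ is bounded above by some element of $F$; the positivity of the $f_i$ in $P$ will then need to be arranged separately, which is where some care is required.

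First I would recall the geometric reformulation: an order of $E$ in which an element $h\in E$ is positive corresponds to a point of the real spectrum of $\Spec(B)$ for a suitable finitely generated $F$-algebra $B$ with fraction field $E$, lying in the basic open set $\{h>0\}$. Choose a model $V = \Spec(B)$ of $E/F$ on which $f_1,\dots,f_n$ are regular and invertible (invert them), so that by hypothesis the semialgebraic subset $\{f_1>0,\dots,f_n>0\}$ of $V_r$ contains a point with support the generic point of $V$, i.e.\ this set is nonempty and Zariski dense. Equivalently, over the real closure, there is a semialgebraically nonempty open subset $W\subset V(\tilde F)$ (for $\tilde F$ a real closed field containing $F$, e.g.\ $F$ itself if we work with $F$-points after base change) on which all $f_i>0$. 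The element $h := \max(f_1+\cdots+f_n+f_1^{-1}+\cdots+f_n^{-1}, 1)$ is a well-defined semialgebraic function on $W$, positive everywhere. Now I want an order $P$ of $E$, supported at the generic point, in which the $f_i$ are positive and $h$ is bounded above by an element of $F$.

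The key step is then a curve-selection / valuation argument. The function $h$ need not be bounded on $W$, but I would pick any point $p_0 \in W$ (an $\tilde F$-point, say even an $F$-point if $F$ is real closed and $W$ has one, which one can arrange) and take the order $P$ on $E$ given by the place of $E$ over $F$ centered at $p_0$ in such a way that elements of $E$ regular at $p_0$ are infinitesimally close to their values: concretely, use the order corresponding to a generic curve germ through $p_0$ contained in $W$ — such a germ exists because $W$ is open and nonempty, by curve selection. Along this germ all the $f_i$ stay positive (the germ lies in $W$), and $h$, being regular and finite near $p_0$, satisfies $h <_P h(p_0)+1 \in \tilde F$. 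If we have ensured $p_0 \in V(F)$ then $h(p_0)+1\in F$ and we are done; in general one chooses $F$ real closed at the outset (replace $F$ by its real closure inside an order where the $f_i$ are positive — this does not change $E$ being finitely generated over it) so that $V$ has an $F$-point in the nonempty open set $W$, using that $F$-points are dense in $V_r$ and $W$ corresponds to a nonempty open subset.

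The main obstacle I anticipate is making the "generic curve germ through $p_0$ lying in $W$ with $h$ bounded" argument fully rigorous without invoking heavy machinery: one must choose the germ so that its associated order has support exactly the generic point of $V$ (so that it is an order of all of $E$, not of a proper residue field), which requires $p_0$ to be a sufficiently general — in particular smooth — point of $V$ and the curve germ to be general among those through $p_0$; this is standard (e.g.\ via \cite[III.7]{ks} or curve selection in the real spectrum) but needs to be stated carefully. An alternative, perhaps cleaner, route avoiding curve germs: argue directly in the real spectrum that the constructible set $\{f_1>0,\dots,f_n>0\}\subset V_r$ is nonempty, pass to its closure, which contains a closed point $\bar P\in V_r^{\mx}$ with relatively archimedean (indeed archimedean, since closed over $F$ real closed means residue field algebraic over $F$, hence equal to $F$) ordered residue field, so every element regular there is bounded over $F$; then perturb $\bar P$ to a nearby order $P$ still satisfying $f_i>0$ and still having all of $E$ as residue field — the perturbation being possible because $\bar P$ lies in the closure of the open set $\{f_i>0\}\cap\{\text{generic support}\}$ — and on which $h$, being specializing to a finite value at $\bar P$, remains bounded by the same element of $F$. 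I would write up this second version, as it keeps everything inside the real-spectrum formalism already used in the paper.
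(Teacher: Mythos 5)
Your general strategy (pass to an affine model, find a point of the real spectrum at which the $f_i$ take bounded values, and relate it by specialization to an order of $E$ lying in $\{f_1>0,\dots,f_n>0\}$) is the right circle of ideas, and the final transfer of bounds along a specialization is correct; but both concrete routes you propose have a genuine gap at the key existence step. In the curve-germ version, the order of $\Sper F[V]$ attached to a semialgebraic curve germ has support equal to the ideal of the Zariski closure of the germ, which is at most one-dimensional; so as soon as $\dim V\ge 2$, no choice of ``generic germ through a smooth point'' can produce an order of $E$ itself, and the caveat you add does not repair this. In the real-spectrum version (the one you say you would write up), the crucial justification is false: a closed point of $V_r$ need not have residue field algebraic over $F$, nor be relatively archimedean --- for example $P_\infty\in\Sper F[T]$ is closed, has residue field $F(T)$, and $T$ exceeds every element of $F$ (see the description of $\Sper\R[T]$ in Section~\ref{sec:background}). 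So picking ``a closed point $\bar P$ of the closure of $\{f_i>0\}$'' gives no boundedness at all. Moreover, the ``perturbation'' back to an order with support $(0)$ requires $\bar P$ to lie in the closure of $\{f_i>0\}\cap\supp^{-1}(0)$, which you have not arranged, and it genuinely needs a nonsingularity hypothesis: an $F$-rational point such as the isolated real point of the curve $y^2=x^2(x-1)$ lies in open sets of the form $\{f_i>0\}$ yet admits no generization to an order of the function field whatsoever.

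The missing ingredient, which is exactly the content of the paper's short proof, is the following: since the given order of $E$ lies in the nonempty open constructible subset of $\Sper F[V]$ defined by $f_i>0$ together with the nonvanishing of an equation of the singular locus, density of $V(F)$ in $\Sper F[V]$ (Artin--Lang) yields a \emph{nonsingular} $F$-point $\xi$ of $V$ with all $f_i(\xi)>0$; regularity of the local ring at $\xi$ then guarantees an order $P$ of $E=F(V)$ specializing to $\xi$, and for any such $P$ one automatically has $f_i>_P0$ and $f_i<_Pc$ with $c:=1+\max_i f_i(\xi)\in F$. (Two minor points: the preliminary reduction involving $f_i^{-2}$ is unnecessary, since the lemma does not ask for the $f_i$ to be bounded away from zero, and ``$\max$'' of elements of $E$ is not a well-defined element before an order has been chosen; also, replacing $F$ by a real closure is moot, as $F$ is assumed real closed.)
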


\begin{proof}
Choose an affine integral $F$-variety $V$ for which $F(V)\cong E$
and $f_1,\dots,f_n\in F[V]$. The semialgebraic set $\{\xi\in V(F)\colon
f_i(\xi)>0$, $i=1,\dots,n\}$ contains a nonsingular point $\xi$ of $V$.
Since $\xi$ is nonsingular, there exists an ordering $P$ of
$F(V)$ specializing into $\xi$ inside $\Sper F[V]$. Any such $\beta$
satisfies the condition of the lemma, since $f_i<_Pc$ for
$c:=1+\max\{f_1(\xi),\dots,f_n(\xi)\}$.
\end{proof}

\subsection{Fundamental Theorem}

Let $K$ be a real closed field with a compatible absolute value (which might be trivial).
Let $L / K$ be an extension of real closed fields where the absolute value on $L$ extends the one on $K$
and is not trivial.
Let $M / L$ be an extension of real closed fields where the absolute value on $M$ extends the
one on $L$ and $\vert M \vert = \R_{\geq 0}$.

Let $S$ be a semialgebraic set defined over $K$ and let $S_M$ and $S_L$ be
the base-changes to $M$ and $L$, respectively.
Let $A$ be the coordinate ring of the Zariski closure of $S$
and $A_L := A \otimes_K L$ the base-change of $A$ to $L$.

Let $\sigma \in \{1, -1, 0\}^n$ and denote by $S^\sigma = \{ x \in S
\mid \sgn(x_i) = \sigma_i \}$ and similarly for $\R^\sigma$,
$M^\sigma$, $K^{\sigma}$ and $L^{\sigma}$.
We first show that the tropicalization of any semialgebraic set
coincides with the tropicalization of its closure.

\begin{lem} \Label{Closed vs. not closed}
For any semialgebraic set $S$ in $K^n$ and any $\sigma \in \{1,-1,0\}^n$, we have
\[
\trop_r(S^\sigma) =  \trop_r(\overline{S^\sigma}) \cap \R^\sigma.
\]
\end{lem}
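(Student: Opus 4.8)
The plan is to reduce to the case where the value group of $K$ equals $\R_{>0}$, and then run an elementary ultrametric approximation argument inside $K^n$.

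Recall that, by definition, $\trop_r(T)$ of a semialgebraic set $T$ over $K$ means the image of the base change $T_M$ under the coordinatewise real tropicalization $M^n\to\R^n$, where $|M|_M=\R_{\ge0}$. Since base change to $M$ commutes with Boolean operations, with intersecting by the orthant conditions $\sgn(x_i)=\sigma_i$, and with forming the order-topology (semialgebraic) closure — the last because $\overline{S^\sigma}$ has a semialgebraic description obtained uniformly from one of $S^\sigma$, hence commutes with extension of real closed fields, cf.\ \cite{bcr} — we may replace $K$ by $M$ and $S$ by $S_M$. Thus we may assume $|K|_K=\R_{\ge0}$, so that $\trop_r\colon K^n\to\R^n$ is the honest map $x\mapsto(\sgn(x_i)\,|x_i|_K)_{i}$ and $\overline{S^\sigma}$ is the closure in the order topology of $K^n$. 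The inclusion $\subseteq$ is then immediate: $S^\sigma\subseteq\overline{S^\sigma}$ gives containment of the tropicalizations, and since every point of $S^\sigma$ has sign pattern $\sigma$ its image lies in $\R^\sigma$.

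For the inclusion $\supseteq$, take $y\in\trop_r(\overline{S^\sigma})\cap\R^\sigma$ and a preimage $\bar x\in\overline{S^\sigma}$ with $\trop_r(\bar x)=y$. I would first observe that $y\in\R^\sigma$ forces $\bar x$ to have sign pattern exactly $\sigma$: for $i$ with $\sigma_i=0$ this is automatic, since $\overline{S^\sigma}\subseteq\{x_i=0\}$; and for $i$ with $\sigma_i\ne0$ one has $\sgn(\bar x_i)\,|\bar x_i|_K=y_i\ne0$, so $|\bar x_i|_K>0$ and $\sgn(\bar x_i)=\sgn(y_i)=\sigma_i$. (If every $\sigma_i=0$ the lemma is trivial, as $S^\sigma$ is then already closed.) Otherwise, choose $\epsilon\in K_{>0}$ with $|\epsilon|_K$ strictly smaller than every nonzero $|\bar x_i|_K$, which is possible because $|K^*|_K=\R_{>0}$. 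Since $\bar x$ lies in the order-topology closure of $S^\sigma$ and $\{z\in K^n\colon|z_i-\bar x_i|_K<\epsilon,\ \forall i\}$ is a neighborhood of $\bar x$, there is a point $x\in S^\sigma$ with $|x_i-\bar x_i|_K<\epsilon$ for all $i$. For $i$ with $\sigma_i=0$ we get $x_i=0=\bar x_i$ because $x\in S^\sigma\subseteq\{x_i=0\}$; for the remaining $i$, the strict inequality $|x_i-\bar x_i|_K<|\bar x_i|_K$ and the ultrametric inequality give $|x_i|_K=|\bar x_i|_K$, while the compatibility of $|\cdot|_K$ with the order (in the form $|a|_K>|b|_K\Rightarrow\sgn(a+b)=\sgn(a)$, applied to $a=\bar x_i$, $b=x_i-\bar x_i$) gives $\sgn(x_i)=\sgn(\bar x_i)$. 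Hence $\trop_r(x)=\trop_r(\bar x)=y$ while $x\in S^\sigma$, so $y\in\trop_r(S^\sigma)$.

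The step I expect to need the most care is making sure the approximating point $x$ can be taken inside $S^\sigma$ itself — so that the coordinates indexed by $\{i:\sigma_i=0\}$ automatically remain zero rather than being perturbed to something small but nonzero — and choosing the approximation radius $\epsilon$ strictly below all nonzero $|\bar x_i|_K$, which is precisely what makes the ultrametric collapse the perturbation at the level of signed absolute values. The only external ingredient is the routine fact that the closure of a semialgebraic set commutes with base change to a real closed extension.
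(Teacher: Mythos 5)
Your central approximation argument is the right one, and it is in fact the paper's own argument: approximate $\bar x\in\overline{S^\sigma}$ by a point of $S^\sigma$ to within a radius whose absolute value is strictly below every nonzero $\vert\bar x_i\vert_K$, then use the ultrametric inequality to keep the $\vert\cdot\vert_K$'s and order-compatibility to keep the signs (the paper phrases this with a convergent sequence $x^i\to\bar x$ and an index $N$ with $\vert x^N_j-\bar x_j\vert<\vert\bar x_j\vert$, which is the same computation). The problem is your opening reduction. In this lemma $\trop_r(S^\sigma)$ is the \emph{direct} image of $S^\sigma\subset K^n$ under $\trop_r\colon K^n\to\R^n$, not the image of a base change: that is how the lemma is used in the proof of Theorem~\ref{thm:fund}, where it is applied over the field $L$, whose value group is in general a proper subgroup of $\R_{>0}$ (the capital-$\Trop_r$ of the introduction is the base-changed notion; the lowercase $\trop_r$ here is not, which is exactly why items (2) and (4) of Theorem~\ref{thm:fund} are different-looking sets). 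Base change genuinely changes both sides of the identity: for $S=\{x\in K\colon 0<x\le1\}$ one has $\trop_r(S)=\vert K^*\vert_K\cap(0,1]$ while $\trop_r(S_M)=(0,1]$. So "replace $K$ by $M$ and $S$ by $S_M$" proves a different statement, and recovering the $K$-level identity from the $M$-level one is not a formality — going from $\trop_r(S_M)$ back down to $\trop_r(S)$ is essentially the content of the Strong Density Theorem~\ref{strong density}, which this lemma must not presuppose.

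Fortunately the reduction is unnecessary. The only place you use $\vert K^*\vert_K=\R_{>0}$ is to choose $\epsilon$ with $\vert\epsilon\vert_K$ below all nonzero $\vert\bar x_i\vert_K$, and for that nontriviality of the valuation suffices: pick $c\in K_{>0}$ with $\vert c\vert_K<1$ and take a high power. Nontriviality is also what makes the order topology agree with the absolute-value topology, so that order-topology (semialgebraic) closure points are approximated by such balls. Treat the trivially valued case separately, as the paper does; it is immediate since then $\trop_r$ only records sign vectors. With those two adjustments (and fixing the small type mismatch in "$\vert z_i-\bar x_i\vert_K<\epsilon$", which compares a real number with an element of $K$ — either use the order-ball $-\epsilon<z_i-\bar x_i<\epsilon$ and compatibility, or compare with $\vert\epsilon\vert_K$), your proof coincides with the paper's.
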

\begin{proof}
The statement is immediate when the absolute value on $K$ is trivial, 
so we may assume it is non-trivial. 

The inclusion $\subset$ is obvious.

Now let $y \in \trop(\overline{S^\sigma}) \cap \R^\sigma$.
Let $x \in \overline{S^\sigma}$ such that $\trop_r(x) = y$ and let $(x^i)_{i \in \mathbb{N}}$ be
a sequence of points in $S^\sigma$ that converges to $x$.
Since the absolute value is non-trivial, the topology on $K^n$ induced by the order is
the same as the one induced by the absolute value.
Thus there exists $N \in \mathbb{N}$ such that $\vert x^i_j - x_j \vert < \vert x_j \vert$
for all $i \geq N$ and for all $j$ with $x_j \neq 0$.
By the non-archimedean triangle inequality this implies $\vert x_j^i \vert = \vert x_j \vert$ for all $i \geq N$.
In particular $y = \trop_r(x^N) \in \trop_r(S^\sigma)$.
\end{proof}

\begin{dfn}
Let $\Gamma$ be a subgroup of $\R$. 
An \emph{integral $\Gamma$-affine polyhedron} in $\R^n$ is a set of
the following form
\begin{align*}
\{ X \in \R^n \mid \alpha_i \cdot X + C_i \leq 0 \text{ for } i = 1,2,\dots,m\},
\end{align*}  
and an \emph{open} integral $\Gamma$-affine polyhedron in $\R^n$ is a set
of the form
\begin{align*}
\{ X \in \R^n \mid \alpha_i \cdot X + C_i < 0 \text{ for } i = 1,2,\dots,m\}
\end{align*}
 where $\alpha_1,\dots,\alpha_m$ are integer vectors, and $C_1,\dots,
C_m$ are in $\Gamma$.
\end{dfn}

\begin{lem} \label{lem: log polyhedron}
Let $P$ be a open $\Gamma$-affine polyhedron polyhedron in $\R^n$.
Then there exists a real tropical polynomial $F_P$ with coefficients in $\pm \exp (\Gamma)$ such that 
$P = \log \{ F_P > 0\}$. 
\end{lem}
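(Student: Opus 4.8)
The plan is to write down $F_P$ explicitly as a single tropical Laurent polynomial having exactly one positive coefficient and several negative ones, and then to read off the identity $P=\log\{F_P>0\}$ from a short case analysis of the multivalued addition $\oplus$ on $\R\T$. Here $\log$ denotes the coordinatewise logarithm, which restricts to a homeomorphism of the positive orthant $\R_{>0}^n\subset\R\T^n$ onto $\R^n$; accordingly, ``$\log\{F_P>0\}$'' is read as $\log\bigl(\{F_P>0\}\cap\R_{>0}^n\bigr)$, and $\exp$ denotes the inverse homeomorphism.

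Write $P=\{X\in\R^n:\alpha_i\cdot X+C_i<0,\ i=1,\dots,m\}$ with $\alpha_i\in\Z^n$, $C_i\in\Gamma$. If $\alpha_i=\alpha_j$ for some $i\ne j$, merge the two inequalities into $\alpha_i\cdot X+\max(C_i,C_j)<0$ (and $\max(C_i,C_j)\in\Gamma$); so we may assume the $\alpha_i$ pairwise distinct, and if $m=0$ we simply take $F_P=1$. Otherwise put
\[
F_P\>:=\>1\ \oplus\ \bigoplus_{i=1}^m(-e^{C_i})\,x^{\alpha_i}.
\]
Its coefficients $1=e^{0}$ and $-e^{C_i}$ all lie in $\pm\exp(\Gamma)$, since $0,C_i\in\Gamma$.

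For the verification, fix $x\in\R_{>0}^n$ and set $X=\log x$. Each term of $F_P$ evaluates to a real number: the constant term to $1>0$ and the $i$-th term to $-e^{C_i}x^{\alpha_i}<0$. For a finite family of real numbers, $\bigoplus$ returns the element of largest absolute value when that element is unique, and otherwise returns an interval $[-M,M]$ (with $M$ the maximal absolute value) whenever both a positive and a negative element attain $M$. Since $1$ is the unique positive term, $F_P(x)$ is a positive number if and only if $1$ strictly dominates every other term in absolute value, i.e. $e^{C_i}x^{\alpha_i}<1$ for all $i$, i.e. $\alpha_i\cdot X+C_i<0$ for all $i$, i.e. $X\in P$. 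Hence $\{F_P>0\}\cap\R_{>0}^n=\exp(P)$ and $\log\{F_P>0\}=P$. The degenerate inequalities with $\alpha_i=0$ cause no trouble: if $C_i<0$ the term $-e^{C_i}$ can never dominate $1$, and if $C_i\ge 0$ then $P=\emptyset$ and $1$ can never strictly dominate the constant $-e^{C_i}$, so the formula stays correct.

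The subtlest point, and the one I would be most careful about, is the multivalued behaviour of $\oplus$ on the boundary hyperplanes $\{\alpha_i\cdot X+C_i=0\}$, where $1$ and $-e^{C_i}x^{\alpha_i}=-1$ tie in absolute value and $1\oplus(-1)=[-1,1]$ fails to be a positive number --- which is exactly why the resulting polyhedron $P$ comes out open. It is worth noting that no intersection or product construction is needed: the single constant term $1$ competes with all the linear inequalities at once. Finally, should an honest polynomial be preferred to a Laurent one, multiplying $F_P$ by a monomial $x^{\beta}$ with $\beta\in\Z_{\ge 0}^n$ chosen so that $\beta+\alpha_i\ge 0$ for every $i$ changes nothing on $\R_{>0}^n$ and keeps all coefficients in $\pm\exp(\Gamma)$.
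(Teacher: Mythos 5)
Your construction is correct, and in spirit it is the same kind of argument as the paper's: exhibit a single explicit tropical (Laurent) polynomial whose terms all have one sign except for one distinguished term, and decide positivity of $F_P(x)$ in $\R\T$ by which term strictly dominates in absolute value, the ties on the hyperplanes $\alpha_i\cdot X+C_i=0$ producing the multivalued interval and hence the openness of $P$. The difference worth recording is where the signs sit. The paper's proof rewrites $P$ as $\{\max_i(\alpha_i\cdot X+C_i)>0\}$ and takes $F_P=A_1x^{\alpha_1}\oplus\cdots\oplus A_mx^{\alpha_m}\oplus(-1)$ with $A_i=\exp(C_i)$; on the positive orthant this cuts out the \emph{union} of open halfspaces $\{\max_i(\alpha_i\cdot X+C_i)>0\}$, whereas an open integral $\Gamma$-affine polyhedron as defined just before the lemma is the \emph{intersection} $\{\alpha_i\cdot X+C_i<0\ \text{for all }i\}$, and it is this intersection form that is needed when the lemma is applied in the proof of Theorem~\ref{thm:fund}, where the $P_i$ produced by Lemma~\ref{lem polyhedra is complement} are intersections of strict halfspaces and one needs $\exp(P_i)=\{F_{P_i}>0\}\cap\R_{>0}^n$ so that $\exp(\R^n\setminus P_i)=\{-F_{P_i}\ge 0\}$ there. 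Your choice $F_P=1\oplus\bigoplus_i(-e^{C_i})x^{\alpha_i}$ is exactly the sign-corrected version that matches the stated definition and the later use, so your proof is the one I would keep. Your additional care is also sound and not superfluous: merging repeated exponent vectors (a hyperfield polynomial carries one coefficient per monomial), the degenerate constant inequalities, the reading of $\log\{F_P>0\}$ inside $\R_{>0}^n$, and the observation that Laurent exponents are permitted by the paper's definition of polynomials over a hyperfield, so the final monomial-clearing remark is optional.
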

\begin{proof}
We write
\[
P = \{ \max\left( (\alpha_1 \cdot X + C_1), \dots, (\alpha_m \cdot X + C_m) \right) > 0 \}.
\]
with $\alpha_1, \dots, \alpha_m \in \Z^n$ and $C_1,\dots,C_m \in \Gamma$.
One immediately checks that picking 
\begin{align*}
F_P = A_1 x^{\alpha_1} \oplus \cdots \oplus A_m x^{\alpha_m} \oplus (- 1)
\end{align*}
where $A_i = \exp(C_i)$ suffices. 
\end{proof}

The next two lemmas are used to prove the $(3) \subseteq (4)$ part of Theorem~\ref{thm:fund}.

\begin{lem} \label{lem polyhedra is complement}
Let $T$ be a finite union of integral $\Gamma$-affine polyhedra in $\R^n$. 
Then the complement of $T$ is a finite union of open integral $\Gamma$-affine polyhedra. 
\end{lem}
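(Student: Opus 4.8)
The plan is to reduce to a single polyhedron by De Morgan and a Boolean manipulation, and then handle a single polyhedron directly. First I would recall that each integral $\Gamma$-affine polyhedron $P_k$ in the union $T = \bigcup_{k=1}^N P_k$ is by definition an intersection of finitely many closed affine half-spaces $H_{k,1},\dots,H_{k,m_k}$, each of the form $\{X : \alpha \cdot X + C \le 0\}$ with $\alpha \in \Z^n$ and $C \in \Gamma$. Taking complements, $\R^n \setminus T = \bigcap_{k=1}^N (\R^n \setminus P_k)$, and each $\R^n \setminus P_k = \bigcup_{j=1}^{m_k} (\R^n \setminus H_{k,j})$ is a finite union of \emph{open} integral $\Gamma$-affine half-spaces (negating the defining inequality turns $\alpha \cdot X + C \le 0$ into $-\alpha \cdot X - C < 0$, which has the required shape since $-\alpha \in \Z^n$ and $-C \in \Gamma$). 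So $\R^n \setminus T$ is a finite intersection of finite unions of open integral $\Gamma$-affine half-spaces.

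Next I would distribute the intersection over the unions: a finite intersection $\bigcap_k \bigl(\bigcup_j U_{k,j}\bigr)$ equals the union, over all choices of one index $j(k)$ per factor $k$, of the intersections $\bigcap_k U_{k,j(k)}$. This is a finite union (there are $\prod_k m_k$ choices), and each term is a finite intersection of open integral $\Gamma$-affine half-spaces, hence is by definition an open integral $\Gamma$-affine polyhedron. That is exactly the desired conclusion.

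The only point requiring a word of care is the definition of ``polyhedron'' used here: the statement asserts $T$ is a finite union of \emph{(closed) integral $\Gamma$-affine polyhedra}, i.e.\ each piece is cut out by non-strict inequalities, and the complement is claimed to be a finite union of \emph{open} ones, cut out by strict inequalities. The argument above respects this bookkeeping precisely because the negation of a non-strict inequality is a strict inequality and vice versa, and because both $\Z^n$ and $\Gamma$ are closed under negation. I expect no real obstacle here; the main (entirely routine) step is the distributive-law expansion, and the only thing to double-check is that the integrality of the normal vectors and the membership of the constants in $\Gamma$ are preserved under the sign change, which they are.
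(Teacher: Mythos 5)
Your proof is correct, and it takes a genuinely different and more elementary route than the paper. The paper does not argue by De Morgan plus distributivity; instead it forms the hyperplane arrangement generated by all the affine forms $f_i=\alpha\cdot X+C$ defining the polyhedra of $T$, notes that $T$ is a union of relatively open cells of this arrangement, and then, for each cell $\tau$ not contained in $T$, builds an open polyhedron $P_\tau$ from the strict inequalities that $\tau$ satisfies; the key step is a closure argument (any cell meeting $P_\tau$ has $\tau$ in its closure, so it cannot lie in the closed set $T$), which shows $P_\tau\cap T=\emptyset$, and the finitely many $P_\tau$ cover the complement. Your argument replaces all of this by the purely Boolean identity $\R^n\setminus\bigcup_k\bigcap_j H_{k,j}=\bigcup_{j(\cdot)}\bigcap_k\bigl(\R^n\setminus H_{k,j(k)}\bigr)$, together with the observation that negating a non-strict inequality gives a strict one with normal vector still in $\Z^n$ and constant still in $\Gamma$ (as $\Gamma$ is a group); each term of the resulting union is by definition an open integral $\Gamma$-affine polyhedron, and the union is finite (at most $\prod_k m_k$ terms, versus one polyhedron per cell in the paper's argument — both finite, which is all that the application in Theorem~\ref{thm:fund} needs). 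Your route is shorter and needs no topology at all, not even explicitly the closedness of $T$ beyond its given presentation by non-strict inequalities; what the paper's arrangement argument buys in exchange is that its open polyhedra are cut out only by hyperplanes already occurring in the given description of $T$, a feature that is not used elsewhere. The only (harmless) degenerate cases to keep in mind are empty index sets, e.g.\ if some $P_k=\R^n$ the complement is empty, which is still vacuously a finite union of open polyhedra.
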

\begin{proof}
To see that $T$ has such a description, 
first write $T$ as a union of cells in an arrangement of hyperplanes defined by $f_1 = 0,
f_2 = 0, \dots, f_m = 0$ where each $f_i$ has the form $\alpha \cdot X
+ C$ with $\alpha \in \Z^n$, $C \in \Gamma$.  This can be done, for example, by
taking all the hyperplanes defining the polyhedra that make up $T$.  A
hyperplane arrangement partitions the ambient space into relatively
open polyhedra, each defined by a collection of strict inequalities and equations of
the form $f_i > 0, f_j < 0$, or $f_k = 0$. 

Let $\tau$ be a relatively open cell in the hyperplane arrangement which
is not contained in $T$.  
Let $P_\tau$ be the open polyhedron defined by all the inequalities $f_i > 0$
such that $f_i(\tau) > 0$ and $f_j < 0$ such that $f_j(\tau) < 0$.  Clearly, $\tau$ is
in $P_\tau$.  We claim that $P_\tau \cap T = \varnothing$.  
Let $\tau'$ be a cell whose relative interior is in $P_\tau$.  Then $\tau'$
is defined by the inequalities defining $P_\tau$ and possibly some additional
equations of the form $f_k = 0$.  But these additional equations must
be satisfied by $\tau$ as well, so $\tau$ is in the closure of
$\tau'$.  Since $T$ is closed and $\tau$ is
not in $T$, it follows that $\tau'$ is not in $T$.  

Thus for every relatively open cell $\tau$ which is not in $T$, we have
found an open polyhedron $P_\tau$ which contains $\tau$ and does not intersect $T$.  Since the
hyperplane arrangement consists of finitely many 
cells, we have found a description of $T$ as the intersection of
complements of finitely many open polyhedra.
\end{proof}

Next, we will prove a lifting lemma for inequalities.
\begin{lem}\label{lem:lift}
Let $S \subset K_{>0}^n$ be a semialgebraic set and $F$ be a tropical
polynomial with coefficients in $|K|^\sgn_K$ such that $F \geq 0$ on
$\trop_r(S)$.  Then there is a polynomial $f$ in $K[x_1,\dots,x_n]$ such that $f \geq 0$ on $S$ and
$\trop(f) = F$.
\end{lem}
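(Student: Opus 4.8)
The goal is to lift a tropical polynomial inequality $F \geq 0$ valid on $\trop_r(S)$ to an honest polynomial inequality $f \geq 0$ valid on $S$ with $\trop(f) = F$. Write $F = \bigoplus_{d \in D} A_d\, x^d$ with $D \subset \Z^n$ finite and each $A_d \in |K|^\sgn_K$, so $A_d = |c_d|^\sgn_K$ for some $c_d \in K$. The naive guess is $f := \sum_{d \in D} c_d\, x^d$, which by construction satisfies $\trop(f) = F$; the whole content is to show we can choose the $c_d$ (adjusting the unimodular-valuation representatives and, crucially, their \emph{residues}) so that $f \geq 0$ holds on all of $S$, not merely on the generic orthant behaviour.

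**Key steps.** First I would reduce to the case where $F(y)$ is, for each $y \in \trop_r(S)$, either a single nonnegative number or an interval $[-a,a]$ — i.e.\ split $\trop_r(S)$ according to which pair of monomials $A_d x^d$ attains the maximum of $|A_d x^d|$. On the locus where a single monomial strictly dominates, $F(y)$ is that monomial's value and the sign condition $F(y) \geq 0$ forces $\sgn(A_d) \cdot (\text{sign of }y^d) \geq 0$; on the locus where two opposite-sign monomials tie, $F(y)$ is an interval containing $0$ and the inequality is automatic. Next, pull this back: a point $x \in S$ with $\trop_r(x) = y$ has $|f(x)|^\sgn_K \in F(y)$, so in the strictly-dominated region $f(x)$ automatically has the right sign, while in the tie region one must argue that the subdominant terms and, if necessary, a perturbation of the residues of the $c_d$ make $f(x) \geq 0$. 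The technical heart is a residue-field argument: working in the residue field $k_K$ (or residue fields of points of $\Sanpm$), use Lemma~\ref{lem: log polyhedron} and Lemma~\ref{lem polyhedra is complement} to write the polyhedral region of $\trop_r(S)$ on which $F$ could fail as a finite union of open $\Gamma$-affine polyhedra, each the tropicalization of a strict inequality, and then invoke the Strong Density Theorem~\ref{strong density} to certify that if $f$ failed on $S$ there would already be a tropical point of $\Sanpm$ in $|K|^\sgn_K$ witnessing $F < 0$, contradicting $F \geq 0$ on $\trop_r(S)$.

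**Main obstacle.** The delicate point — and where I expect to spend the real effort — is the tie locus $\{a = -b \le 0\}$ where tropical addition is multivalued: there $\trop_r(f)(y) = [-a,a] \ni 0$, so tropicalization alone gives no sign information about $f(x)$, and one genuinely has to choose the residues of the coefficients $c_d$ (the leading Puiseux-coefficients, in the motivating example) to control the sign of the \emph{actual} leading term of $f(x)$ on that stratum. I would handle this by induction on the number of monomials, peeling off one dominant pair at a time: on the tie stratum $F$ restricts to a tropical polynomial in fewer monomials with coefficients that are differences $c_d + c_{d'}$ whose valuation is now strictly smaller, so after finitely many steps one reaches a single-monomial situation where the sign is forced. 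One must check at each stage that the sign constraints imposed on the residues are simultaneously satisfiable — this is exactly where a Baer–Krull / real-spectrum argument enters, choosing an ordering of the residue field $k_K$ in which prescribed elements are positive, which is possible precisely because the hypothesis $F \ge 0$ on $\trop_r(S)$ rules out the obstruction. Finally, once $f$ is produced, one verifies $\trop(f) = F$ (no cancellation of leading coefficients occurred, by construction) and $f \ge 0$ on $S$ (by the stratum-by-stratum analysis together with Lemma~\ref{Closed vs. not closed} to pass between $S^\sigma$ and its closure).
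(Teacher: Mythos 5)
There is a genuine gap, and it sits exactly where you locate the difficulty: the tie locus. Your proposed certification step — ``if $f$ failed on $S$ there would already be a tropical point \dots witnessing $F<0$, contradicting $F\ge 0$ on $\trop_r(S)$'' — is circular and in fact false. If $f(x)<0$ at some $x\in S$ lying over a tie stratum, then $\trop_r(f)(|x|^\sgn)$ is an interval of the form $[-a,a]$, which contains a nonnegative number, so $F\ge 0$ still holds at $|x|^\sgn$; no tropical witness of $F<0$ is produced, and Theorem~\ref{strong density} gives no contradiction. Your fallback, the induction ``peeling off one dominant pair at a time'' with new coefficients $c_d+c_{d'}$ of smaller valuation, is not meaningful when the tied monomials have \emph{different} exponents (the generic case): $c_d x^d + c_{d'}x^{d'}$ is not a monomial, its sign at a point of $S$ depends on the point and not only on the residues of $c_d,c_{d'}$, and there is no single ``coefficient'' to recurse on. Likewise the Baer--Krull step does not apply: $k_K$ is real closed, hence has a unique ordering, and what must be made nonnegative are values of functions on all of $S$, not finitely many prescribed elements, so ``choosing an ordering of the residue field'' cannot force the uniform sign condition. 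In short, the proposal correctly reduces to the tie strata but supplies no working mechanism for producing one polynomial $f$ that is nonnegative on all of them simultaneously — which is the entire content of the lemma.

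The paper's proof takes a different and much shorter route that you may want to compare with. Split $F$ into the terms with positive coefficients $A_i$ and those with negative coefficients $B_j$, and lift them separately to $N(x)=a_1x^{\alpha_1}+\dots+a_kx^{\alpha_k}$ and $-P(x)=b_1x^{\beta_1}+\dots+b_\ell x^{\beta_\ell}$ with $|a_i|^\sgn_K=A_i$, $|b_j|^\sgn_K=B_j$. Since $S\subset K_{>0}^n$, there is no cancellation inside either group, so the hypothesis $F\ge 0$ on $\trop_r(S)$ says precisely that $|N(x)|\ge |P(x)|>0$ for every $x\in S$, i.e.\ the ratio $\varphi(x)=N(x)/P(x)$ satisfies $|\varphi(x)|^\sgn_K\ge 1$ on $S$. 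Because $\varphi(S)$ is a semialgebraic subset of $K$, one can choose a single $\varepsilon\in K_{>0}$ with $|\varepsilon|_K=1$ and $\varphi\ge\varepsilon$ on $S$, and then $f=N-\varepsilon\,(P)$ (that is, $f_\varepsilon = a_1x^{\alpha_1}+\dots+a_kx^{\alpha_k}+\varepsilon(b_1x^{\beta_1}+\dots+b_\ell x^{\beta_\ell})$) is nonnegative on $S$ and still tropicalizes to $F$ because $|\varepsilon|_K=1$. This $\varepsilon$-rescaling of the negative block is the uniform ``choice of representatives'' your stratum-by-stratum residue bookkeeping was trying to produce, and the semialgebraicity of $\varphi(S)$ is what replaces your appeal to strong density and Baer--Krull.
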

\begin{proof}
Suppose $F \geq 0$ on $\trop_r(S)$, and $F$ has the form
 \[F(X_1,\dots,X_n) = A_1 X^{\alpha_1} \oplus \cdots \oplus A_k X^{\alpha_k}
\oplus B_1 X^{\beta_1} \oplus \cdots \oplus B_\ell X^{\beta_\ell}\] 
where $A_1,\dots,A_k$ are positive elements and $B_1,\dots,B_\ell$ are
negative elements of $|K|^\sgn_K$.
  Consider a polynomial
of the form \[f_\varepsilon(x_1,\dots,x_n) = a_1 x^{\alpha_1} + \cdots + a_k x^{\alpha_k}
+ \varepsilon( b_1 x^{\beta_1} + \cdots + b_\ell x^{\beta_\ell})\]
where $|a_i|^\sgn_K = A_i$ and $|b_j|^\sgn_K = B_j$ for each
$i=1,\dots, k$ and $j = 1,\dots,\ell$.  We wish to show
that $f_\varepsilon > 0$ on $S$ for some positive $\varepsilon \in K$
with $|\varepsilon|_K = 1$.

Consider the map 
\[ \varphi : S \rightarrow K, \quad~ x \mapsto \frac{ a_1
x^{\alpha_1} + \cdots + a_k x^{\alpha_k}}{-( b_1 x^{\beta_1} +
\cdots + b_\ell x^{\beta_\ell})}. \]
For any $x \in S \subseteq K_{>0}^n$, 
we have $|x|^\sgn_K \in \Trop_r(S)$, so $F(|x|^\sgn_M) \geq 0$. 
Then we have 
\[ |a_1 x^{\alpha_1} + \cdots + a_k x^{\alpha_k}|^\sgn_K ~~\geq~~ 
|-( b_1 x^{\beta_1} + \cdots + b_\ell x^{\beta_\ell})|^\sgn_K ~~>~~ 0,\]
which implies that $|\varphi(x)|^\sgn_K \geq 1$.  
The image
$\varphi\left( S \right)$ is a $K$-semialgebraic subset of $K$, so
there is an element $0 < \varepsilon \in K$ with $|\varepsilon|_K = 1$ 
such that $\varphi\left( S\right) \geq \varepsilon$.  
This means $f_\varepsilon > 0$ on $S$.
\end{proof}

We will now give an analogue of the Fundamental Theorem of Tropical
Geometry, for semialgebraic sets.  Compare with~\cite[Theorem 4.2]{Draisma}, \cite{EKL},
and \cite[\S3.2]{TropicalBook}.

\begin{thm} \Label{thm:fund}  Let $S$ be a semialgebraic set in $K^n$ defined over $K$.
The following subsets of $\R^n$ agree.
\begin{enumerate}
\item
The tropicalization of the real analytification $\Sanpm$ of $S$:
\[\trop_r(\Sanpm);\]
\item
The tropicalization of the $M$-points of $S$:
\[\trop_r(S_M);\]
\item The set defined by tropicalization of weak
  inequalities in $K[T_1,\dots,T_n]$ satisfied by $S$ in each orthant:
\[
\bigcup_{\sigma \in \{+,-,0\}^n} \left( \bigcap_{\substack{f \geq
    0 \text{ on }S^\sigma}} \{\trop(f) \geq 0\}
  \cap \R^\sigma \right);
\]
 The intersection may be taken to be finite.
\item
The orthantwise closure of the tropicalization of the $L$-points of $S$:
\begin{align*}
\bigcup_{\sigma \in \{+,-,0\}^n}  {\overline{\trop_r(S_L^\sigma})} \cap \R^{\sigma}.
\end{align*}
\end{enumerate}  
If $S$ is closed, then all these agree with the closure of
$\trop_r(S_L)$ in $\R^n$. 
\end{thm}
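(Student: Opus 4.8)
\textbf{Proof strategy for Theorem~\ref{thm:fund}.}
The plan is to prove the cycle of inclusions $(1)\subseteq(2)\subseteq(3)\subseteq(4)\subseteq(1)$, after which all four sets coincide, and then deduce the final assertion about closed $S$ from Lemma~\ref{Closed vs. not closed}. For $(1)\subseteq(2)$: given $x\in\Sanpm$, the ordered residue field $K(p_x)$ with its compatible absolute value embeds, after passing to its real closure and then extending to value group $\R$, into $M$; since $x\in\tilde S^\arch$ and $\tilde S$ is constructible, a suitable $M$-point of $S$ realizes the same signed valuation data, so $\trop_r(x)\in\trop_r(S_M)$. For $(2)\subseteq(3)$: if $a\in S_M^\sigma$ and $f\geq 0$ on $S^\sigma$, then (by Tarski transfer of the semialgebraic statement to $M$) $f\geq 0$ on $S_M^\sigma$, so $f(a)\geq 0$, and since signed seminorms are hyperfield morphisms $K\to\R\T$ we get $\trop(f)(|a|^\sgn_M)\ni |f(a)|^\sgn_M\geq 0$, i.e. $|a|^\sgn_M\in\{\trop(f)\geq 0\}$; intersecting over $f$ and over $\sigma$ gives the containment.

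The two genuinely substantive inclusions are $(3)\subseteq(4)$ and $(4)\subseteq(1)$. For $(4)\subseteq(1)$: the closure of $\trop_r(S_L^\sigma)$ lands in $\R^\sigma$ for the appropriate $\sigma$, and by Weak Density (Theorem~\ref{weak density}), applied over $L$ where the absolute value is nontrivial, $\trop_r(S_L^\sigma)$ is dense in $\trop_r((S_L^\sigma)_r^{\an})$; since $\trop_r$ of the real analytification is closed (it is a proper map by Lemma~\ref{trop proper} composed with the closed-image embedding, or more directly $\trop_{r,\scrF}(\Sanpm)$ is closed in $\R^n$), the closure of $\trop_r(S_L^\sigma)$ is contained in $\trop_r((S_L^\sigma)_r^{\an})$. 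Finally $(S_L)_r^{\an}$ and $\Sanpm$ have the same tropicalization because signed seminorms on $A$ and on $A_L=A\otimes_K L$ restrict/extend compatibly — this is the content of (the $L$-analogue of) Proposition~\ref{Completion does not matter}-type arguments, reduced to the fact that any signed seminorm on $A$ with value group $\subseteq\R$ already ``sees'' all of $L$ through $R(f)$ as in Lemma~\ref{order implies abs}. Taking the union over $\sigma$ yields $(4)\subseteq(1)$, using that $\Sanpm=\bigcup_\sigma (S^\sigma)_r^{\an}$ orthantwise since sign conditions are clopen on $\Sanpm$.

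The inclusion $(3)\subseteq(4)$ is where the polyhedral machinery (Lemmas~\ref{lem: log polyhedron}, \ref{lem polyhedra is complement}, \ref{lem:lift}) is deployed, and I expect this to be the main obstacle. Working in a fixed orthant, restrict to $\sigma=(+,\dots,+)$ via the substitution $x_i\mapsto\sigma_i x_i$ so that $S\subset K_{>0}^n$ and, after taking logarithms, $\trop_r(S_L^\sigma)$ becomes a semialgebraic (hence, by the structure theory over the value group, a finite union of $\Gamma$-affine polyhedra) subset $T$ of $\R^n$, where $\Gamma=\val(L^*)$; one must check that its closure is again such a finite union, so that its \emph{complement} is a finite union of open $\Gamma$-affine polyhedra by Lemma~\ref{lem polyhedra is complement}. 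For each such open polyhedron $P$ in the complement, Lemma~\ref{lem: log polyhedron} produces a real tropical polynomial $F_P$ with $F_P>0$ on $\log P$ and $F_P\leq 0$ off it; then Lemma~\ref{lem:lift} lifts $-F_P$ (which is $\geq 0$ on $\trop_r(S^\sigma_L)$, hence by density on its closure) to an honest polynomial inequality $f_P\geq 0$ valid on $S^\sigma$. A point $z\in\R^\sigma$ in set $(3)$ satisfies $\trop(f_P)(z)\geq 0$ for every $P$, hence $F_P(\exp z)\leq 0$ for every $P$, hence $\exp z$ lies in no $P$, hence $\exp z\in T=\overline{\log\trop_r(S_L^\sigma)}$ — so $z$ lies in set $(4)$. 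The delicate points here are: descending the coefficients of $F_P$ to $|K|^\sgn_K$ rather than only $|L|^\sgn_L$ (needed for Lemma~\ref{lem:lift} and for the inequality to be ``defined over $K$''), which uses that $S$ is defined over $K$ and Tarski transfer to pick the lift over $K$; verifying finiteness of the intersection in $(3)$, which falls out of the finiteness of the polyhedral complement description; and handling the orthants and coordinate-hyperplane strata $z_i=0$ uniformly, where one intersects $X$ with $\{f_i=0\}$ as in the proof of Strong Density. Assembling these, the four sets agree; when $S$ is closed, $S^\sigma$ need not be closed but Lemma~\ref{Closed vs. not closed} gives $\trop_r(S^\sigma)=\trop_r(\overline{S^\sigma})\cap\R^\sigma$, and a short argument shows $\bigcup_\sigma\overline{\trop_r(S_L^\sigma)}\cap\R^\sigma$ equals the full closure $\overline{\trop_r(S_L)}$ in that case.
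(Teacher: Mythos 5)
Your overall skeleton (the cycle $(1)\subseteq(2)\subseteq(3)\subseteq(4)\subseteq(1)$, and in particular the treatment of $(3)\subseteq(4)$ via the polyhedrality of $\log\trop_r(S_L)$, Lemma~\ref{lem polyhedra is complement}, Lemma~\ref{lem: log polyhedron} and the lifting Lemma~\ref{lem:lift}) matches the paper, and $(2)\subseteq(3)$ is fine. But two of your steps have genuine gaps. In $(1)\subseteq(2)$, the claim that the ordered valued residue field $K(p_x)$, after real closure and extension of the value group, embeds into $M$ is unjustified and in general false: $M$ is a \emph{fixed} extension with value group $\R$, not a universal or saturated one. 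For instance, with $K=\R\{\!\{t\}\!\}$ and $M=\R((t^{\R}))$ (real closed, value group $\R$, residue field $\R$), a type II point $x$ whose induced ordering on the residue field $k(x)$ is non-archimedean over $\R$ admits no order- and valuation-compatible embedding of $K(p_x)$ into $M$, since it would force an embedding of a non-archimedean ordered extension of $\R$ into $\R$. Your fallback, that ``a suitable $M$-point of $S$ realizes the same signed valuation data,'' is precisely the Strong Density Theorem~\ref{strong density} applied over $M$ (where $\vert M\vert^\sgn=\R$); it does not follow from constructibility of $\tilde S$ alone, and it is the substantive input here. The paper proves $(1)\subseteq(2)$ by combining the surjection $(S_M)_r^{\an}\to\Sanpm$ (compatible with $\trop_r$) with exactly that theorem.

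The more serious gap is in $(4)\subseteq(1)$: your key assertion that ``$\trop_r$ of the real analytification is closed,'' or ``more directly $\trop_{r,\scrF}(\Sanpm)$ is closed in $\R^n$,'' fails when $S$ is not closed. Properness of $\trop_{r,\scrF}$ on $\Xanpm$ (Lemma~\ref{trop proper}) gives closed images only for closed subsets of $\Xanpm$, and $\Sanpm$ need not be closed. Concretely, for $S=\{x\in K:0<x<1\}$ the set $\trop_r(\Sanpm)$ contains $\vert t^n\vert=e^{-n}$ for every $n$ but not $0$ (the only candidate preimage of $0$ is the ordering ``$0^+$'', which is not relatively archimedean since $1/T$ exceeds all of $K$), so it is not closed in $\R$. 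What you actually need is closedness of $\trop_r((S_L)_r^{\an})$ inside the open orthant $\R^\sigma$, and that is essentially equivalent to the inclusion you are trying to prove, so it cannot simply be asserted. The paper circumvents this by working orthantwise with the chain $\overline{\trop_r S}\subset\overline{\trop_r(\Sanpm)}=\trop_r(\overline{\Sanpm})\subset\trop_r(\overline S)\subset\trop_r(S)$, i.e.\ passing to the closed set $\overline S$ (where properness does apply), invoking the already established direction $(1)\subseteq(4)$ for $\overline S$, and returning to $S$ via Lemma~\ref{Closed vs. not closed}, and then reducing general $L$ to the case $K=L$ through $S_M$ and the $(1)\subseteq(2)$ step. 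Relatedly, your identification $\trop_r((S_L)_r^{\an})=\trop_r(\Sanpm)$ via ``unique extension of signed seminorms to $A_L$'' is not correct as stated: $L$ need not lie in the completion of $K$, so extensions are not unique; only the restriction direction is needed, and it requires checking that points relatively archimedean over $L$ restrict to points relatively archimedean over $K$ (which does hold, using density of $\vert K^*\vert$ in $\R_{>0}$), not a Proposition~\ref{Completion does not matter}-type argument.
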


We will denote by $\Trop_r(S)$ the set described in the theorem above.  
The descriptions (2) and (4) are the most commonly found in the
literature~\cite{Ale, AGS}.  

In (4), if $S$ is not closed, we need to take the closure in each orthant separately. For
example, for $S = K \setminus \{0\}$, we have $\Trop_r(S) = \R
\setminus \{0\}$, which is not closed in $\R$ but its intersection
with $\R_{>0}$, $\R_{<0}$, and $\{0\}$ are all closed.  
 In (3) it is necessary to deal with each orthant separately, as we will see in Remark~\ref{rmk:orthants}.

For $S$ in the positive orthant defined over $\R$, Alessandrini showed that the logarithm of 
$\Trop_r(S)$ agrees with the logarithmic limit set of $S$~\cite[Theorem~4.2]{Ale}, and that $S$ has a description whose
tropicalization characterizes $\Trop_r(S)$ \cite[Corollary~6.7]{Ale}, where inequalities can be joined by both {\em and} and {\em or}.  
But in description (3) of the Theorem above, we use only {\em and} in each orthant.  

It follows from the equivalence of (2) and (4) that the image of any
semialgebraic set under the tropicalization map is closed in each
orthant, when the value group of the field $M$ is $\R$.  See also~\cite[Theorem~10]{AGS}.

\begin{proof}
Since the map $\trop_r$ respects orthants in the obvious way and (3) and (4) are constructed orthantwise,
to prove the equivalence of (1) - (4), we may assume $S = S^\sigma$ for some fixed $\sigma \in \{\pm 1 , 0\}^n$.

\underline{$(1) \subset (2):$} Since there is a surjection $(S_M)_r^{\an} \to \Sanpm$ we have
$\trop_r((S_M)_r^{\an}) \supset \trop_r(S_r^{\an})$.
By Theorem \ref{strong density} we have $\trop_r((S_M)_r^{\an}) = \trop_r(S_M)$.

\underline{$(2) \subset (3):$} This follows from Section~\ref{sec:ineq}.

\underline{$(3) \subset (4)$:}   Without loss of generality, suppose
$S \subset K_{>0}^n$.  By Theorem 4 of~\cite{AGS},
$T := \log\left(\trop_r(S_L)\right)$ is a finite union of integral $\log \vert K \vert$-affine polyhedra.
By Lemma \ref{lem polyhedra is complement} 
we have $T = \R^n \setminus \cup P_i = \cap (\R^n \setminus P_i)$ 
for open integral $\log \vert K \vert$-affine polyhedra $P_i$. 
Taking real tropical polynomials $F_{P_i}$ as in Lemma~\ref{lem polyhedra is complement}, 
we find $\overline{\left(\trop_r(S_L)\right)} = \exp(T) = \cap \{ - F_{P_i} \geq 0 \}$. 
Now the statement follows by taking polynomials $f_{P_i}$ over $K$
such that $\trop (f_{P_i}) = F_{P_i}$ and $f_{P_i} \geq 0$ on $S$ as
in Lemma~\ref{lem:lift}.  Finiteness follows from the finiteness in Lemma \ref{lem polyhedra is complement}.



\underline{$(4) \subset (1)$:}
We first prove the statement assuming $K = L$; then $S = S_L$. 

We have the following inclusions, where all closures are taken within the orthant corresponding to $\sigma$:
\begin{align*}
\overline{\trop_r S} \subset \overline{\trop_r(\Sanpm)} = \trop_r (\overline{\Sanpm}) \subset \trop_r(\overline{S})
\subset \trop_r(S) \subset \trop_r(\Sanpm).
\end{align*}
The first and last inclusion follow from the inclusion $S \subset \Sanpm$. 
The equality follows because the map $\trop_r \colon (\A_r^{n, \an})^\sigma \to \R^{\sigma}$
is proper by Lemma \ref{trop proper}, so in particular closed.  
The next inclusion $\trop_r (\overline{\Sanpm}) \subset \trop_r(\overline{S})$ is $(1) \subset (4)$, 
applied to $\overline{S}$. 
Note here that $(\overline{S})^{\an}_r = \overline{\Sanpm}$. 
Lastly the inclusion $\trop_r(\overline{S}) \subset \trop_r(S)$ 
is Lemma \ref{Closed vs. not closed}.

Now we have to prove the general case, knowing already $\overline{\trop_r(S_L)} \subset \trop_r( (S_L)^{\an}_r)$. 
Noting that obviously $(S_L)_M = S_M$ we find
\begin{align*}
\trop_r( (S_L)^{\an}_r) \subset \trop_r(S_M) \subset \trop_r(S_r^{\an}).
\end{align*}
Here the first inclusion is $(1) \subset (2)$ applied to $S_L$ and the 
second inclusion comes from the fact that the canonical map $S_M \to S_r^{\an}$ 
is compatible with the tropicalization. 
This proves $(4) \subset (1)$ and completes the proof of the equality of $(1) - (4)$. 

For the last sentence in the theorem, observe that if $S$ is closed, then $\Sanpm$ is closed.
Then $\trop_r(\Sanpm)$ is closed by properness of the map $\trop_r \colon \A_r^{n, \an} \to \R^n$
and the claim follows from the Weak Density Theorem \ref{weak density} and
the inclusion $\trop_r(S_L) \subset \trop_r(\Sanpm)$.
\end{proof}

We have the following Corollary,
using Theorem~\ref{thm:connected}. 

\begin{cor}
\Label{cor:tropConnected}
If $S$ is semialgebraically connected, then $\Trop_r(S)$ is
connected.
\end{cor}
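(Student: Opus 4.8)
The plan is to deduce Corollary~\ref{cor:tropConnected} from the Fundamental Theorem~\ref{thm:fund} together with Theorem~\ref{thm:connected}, by exhibiting $\Trop_r(S)$ as a continuous image of the connected space $\Sanpm$. First I would recall that by Theorem~\ref{thm:fund}, description~(1), we have $\Trop_r(S) = \trop_r(\Sanpm)$, where $\trop_r = \trop_{r,\scrF}$ for $\scrF$ the coordinate functions $(x_1,\dots,x_n)$ on $X = \overline S$. Since $\trop_{r,\scrF}$ is continuous by construction (it is the tropicalization map of \S\ref{sec:Xanpm}), the set $\Trop_r(S)$ is the image of $\Sanpm$ under a continuous map, so it suffices to show that $\Sanpm$ is connected.

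Next I would invoke Theorem~\ref{thm:connected}: the map $\psi\colon\Xanpm\to X_r$ induces a bijection between the connected components of $\tilde S$ and those of $\Sanpm$, and in fact the preimage of each connected component of $\tilde S$ is connected. So I need that $\tilde S$ is connected. Here I would use the remark following Theorem~\ref{thm:connected}, namely that the connected components of $\tilde S$ are in natural bijection with the semialgebraic connected components of $S$; since $S$ is assumed semialgebraically connected, $\tilde S$ has a single connected component, hence is connected. Therefore $\Sanpm = \psi^{-1}(\tilde S)$ is connected, and consequently so is its continuous image $\Trop_r(S) = \trop_r(\Sanpm)$.

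One small point to address is the standing hypothesis on the absolute value: Theorem~\ref{thm:connected} and the identification $\Sanpm \cong \tilde S^\arch$ are stated under the assumption that $\abs_K$ is nontrivial, whereas the Fundamental Theorem allows a possibly trivial absolute value on the base field. If $\abs_K$ is trivial, one can either pass to the nontrivially valued extension $L$ supplied in the setup of \S\ref{sec:limit} (noting $\Trop_r(S)$ is computed there via $M\supset L\supset K$ and is unchanged), observing that $S_L$ is still semialgebraically connected since semialgebraic connectedness is preserved under real closed base extension, or simply remark that when $\abs_K$ is trivial $\Trop_r(S)$ lies in $\{0,\pm1\}^n$-signs only and connectedness is handled directly. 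I would phrase the argument over $L$ to keep it uniform.

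The main obstacle, such as it is, is purely bookkeeping: making sure that the map whose image is $\Trop_r(S)$ is genuinely the restriction of a continuous map defined on all of $\Sanpm$ (as opposed to only on $S$ or only on some orthant), and that Theorem~\ref{thm:fund}(1) is being applied with the right identification of $\Sanpm$ (the real analytification of $S$ viewed inside $(\A^n)^{\an}_r$, with $\trop_r$ the coordinatewise signed-seminorm map). There is no hard analytic content; the proof is a two-line composition of the two preceding theorems once the hypotheses are lined up.
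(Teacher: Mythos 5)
Your proposal is correct and follows essentially the same route as the paper, which deduces the corollary by combining Theorem~\ref{thm:connected} (connectedness of $\Sanpm$ when $S$ is semialgebraically connected) with the identification $\Trop_r(S)=\trop_r(\Sanpm)$ from Theorem~\ref{thm:fund}(1) and continuity of $\trop_r$. Your extra remark about passing to $L$ when $\abs_K$ is trivial (using invariance of semialgebraic connectedness under real closed extension) is a sensible bit of bookkeeping that the paper leaves implicit.
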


\begin{rem}
\label{rmk:orthants}
In the description (3) above, it is necessary to consider each orthant
separately if $n \geq 2$.  That is, it cannot be replaced by
\[
\bigcap_{f \geq
    0 \text{ on } S} \{\trop(f) \geq 0\}.
\]
To see this, let $S$ be the complement of the positive orthant in $K^n$.  We claim
that for a polynomial $f$, if $S \subseteq \{f \geq 0\}$,
then $\{\trop(f) \geq 0\} = \R\T^n$; therefore the intersection above is
all of $\R\T^n$.

Let $F$ be a tropical polynomial such that $\{F < 0\} \subset \R\T_{>0}^n$. 
We will show that $\{F < 0 \} = \varnothing$, so $\{F \geq 0\} = \R\T^n$.
Suppose not.  
Then there exists $y \in \{F < 0\}$ where the maximum absolute value among terms in $F(y)$ is attained uniquely at a term
\[c y_1^{a_1} y_2^{a_2} \cdots y_n^{a_n} < 0.\]
Since $\{F < 0\}$ is contained in the positive orthant, we have
\begin{align*}
c (-y_1)^{a_1} y_2^{a_2}\cdots y_n^{a_n}
&> 0 \\
c y_1^{a_1} (-y_2)^{a_2}\cdots y_n^{a_n} & > 0 \\
c (-y_1)^{a_1} (-y_2)^{a_2}\cdots y_n^{a_n} & > 0,
\end{align*}
which is a contradiction since the product of the left hand sides of the four
inequalities above must be positive.
\qed
\end{rem}

\begin{prop}
\label{prop:intersection}
Let $S$ be a semialgebraic set of the form $S = \bigcap \{ f_i \leq
0\}$. 
Then we have the following chain of inclusions
\begin{align*}
\interior{\Trop_r(S)} \subset \interior {\bigcap \{ \trop_r(f_i) \leq 0\}} \subset
\Trop_r(S) \subset \bigcap \{ \trop_r(f_i) \leq 0\},
\end{align*}
where $\Int$ denotes the interior.  
\end{prop}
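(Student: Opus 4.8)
The plan is to prove the three inclusions of the chain in turn, the middle one being the only one with content. For the rightmost inclusion $\Trop_r(S)\subseteq\bigcap_i\{\trop_r(f_i)\le0\}$ I would invoke the Fundamental Theorem: by Theorem~\ref{thm:fund} every point of $\Trop_r(S)$ has the form $\vert a\vert^\sgn_M$ for some $a\in S_M$, and since $f_i(a)\le0$ and the signed seminorm on $M$ is a hyperfield morphism $M\to\R\T$ (Section~\ref{sec:ineq}), the subset $\trop_r(f_i)(\vert a\vert^\sgn_M)\subseteq\R\T$ contains the nonpositive number $\vert f_i(a)\vert^\sgn_M$; hence $\vert a\vert^\sgn_M\in\{\trop_r(f_i)\le0\}$ for every $i$ (on a coordinate hyperplane one reads off $\trop_r(f_i)(y)$ with the convention that monomials involving a vanishing variable are dropped from the hypersum). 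The leftmost inclusion $\interior{\Trop_r(S)}\subseteq\interior{\bigcap_i\{\trop_r(f_i)\le0\}}$ is then immediate, by monotonicity of $\Int(-)$.

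The real work is the middle inclusion $\interior{\bigcap_i\{\trop_r(f_i)\le0\}}\subseteq\Trop_r(S)$, which I would prove orthant by orthant; this is legitimate because $\Trop_r(S)$ is assembled orthantwise and $\Int(-)$ commutes with intersecting by an open orthant. A boundary orthant reduces to an open orthant of the coordinate subspace cut out by the vanishing coordinates — one restricts the $f_i$ to that subspace, and $\trop_r$ is compatible with this restriction — so it suffices to handle open orthants, and after rescaling the coordinates by signs I may assume the orthant is $\R_{>0}^n$. So fix $y\in\interior{\bigcap_i\{\trop_r(f_i)\le0\}}\cap\R_{>0}^n$ together with an open neighbourhood $V\subseteq\bigcap_i\{\trop_r(f_i)\le0\}$; the goal is $y\in\Trop_r(S)$.

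The key point is that, because each $\trop_r(f_i)$ is formed from an honest polynomial and so has pairwise distinct monomials, its interval locus $Z_i:=\{z\in\R_{>0}^n:\trop_r(f_i)(z)\text{ is a nondegenerate interval}\}$ is a finite union of polyhedra of dimension $\le n-1$, hence is closed with empty interior: $\trop_r(f_i)(z)$ is a nondegenerate interval exactly when the maximum of the magnitudes $\vert a_d\vert_K\,z^d$ is attained by two monomials $x^d\ne x^{d'}$ with opposite-sign coefficients, and if this held on an open set the two affine-linear functions $d\cdot\log z+\log\vert a_d\vert_K$ and $d'\cdot\log z+\log\vert a_{d'}\vert_K$ would coincide there, forcing $d=d'$. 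Consequently $V\setminus\bigcup_iZ_i$ is dense in $V$, and at each of its points $z$ every $\trop_r(f_i)(z)$ is a single negative real, i.e.\ $z$ lies in the open set $U:=\bigcap_i\{\trop_r(f_i)<0\}$. For such $z$ I would choose $a_j\in M_{>0}$ with $\vert a_j\vert_M=z_j$, which is possible since $\vert M^*\vert_M=\R_{>0}$; then $\vert f_i(a)\vert^\sgn_M$ lies in the singleton $\trop_r(f_i)(z)$, a negative number, so $f_i(a)<0$ for all $i$, $a\in S_M$, and $\trop_r(a)=z$. Thus $U\cap\R_{>0}^n\subseteq\trop_r(S_M)=\Trop_r(S)$, and since $\Trop_r(S)$ is closed in each orthant (Theorem~\ref{thm:fund}, the value group of $M$ being $\R$), passing to closures inside $\R_{>0}^n$ gives $y\in V\subseteq\overline{V\setminus\bigcup_iZ_i}\subseteq\overline{U\cap\R_{>0}^n}\subseteq\Trop_r(S)$. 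This settles the open-orthant case and hence the proposition.

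I expect the main obstacle to be the interval-locus lemma and the bookkeeping around orthants. One has to use in an essential way that each $\trop_r(f_i)$ comes from a polynomial with distinct monomials — this is precisely what excludes degenerate tropical polynomials such as $X\oplus(-X)$, whose ``$\le0$'' locus is all of $\R\T^n$ although its strict locus is empty — and one must verify that restricting to a coordinate subspace is compatible with $\trop_r$. The remaining ingredients (the Fundamental Theorem, properness of $\trop_{r,\scrF}$ from Lemma~\ref{trop proper}, surjectivity of $M^*\to\R_{>0}$) are routine.
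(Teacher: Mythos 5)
Your proof is correct and follows the paper's argument in its essentials: the outer inclusions come from the Fundamental Theorem \ref{thm:fund} and monotonicity of $\Int$, and the middle inclusion is proved by the same two ideas the paper uses, namely that points where all $\trop_r(f_i)$ are strictly negative lift (via surjectivity of $|\cdot|^\sgn_M$ and the hyperfield-morphism property) to points of $S_M$, that the locus where strictness can fail has empty interior, and that one then concludes by closedness of $\Trop_r(S)$ for closed $S$. The only difference is organizational: the paper dispenses with your orthant-by-orthant reduction and the polyhedral interval-locus lemma by observing directly that $\bigl(\bigcap\{\trop_r(f_i)\le0\}\bigr)\setminus\bigl(\bigcap\{\trop_r(f_i)<0\}\bigr)\subset\bigcup_i\{0\in\trop_r(f_i)\}$, which (precisely because each $f_i$ has pairwise distinct monomials, the same point you isolate) is contained in a proper algebraic subset of $\R^n$ and so has empty interior, yielding $\interior{\bigcap\{\trop_r(f_i)\le0\}}\subset\overline{\bigcap\{\trop_r(f_i)<0\}}\subset\Trop_r(S)$ globally in $\R^n$ without treating boundary orthants separately.
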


This Proposition has two consequences for computing the 
tropicalization of $ S = \bigcap \{ f_i \leq0\}$.
Firstly, while one may get too big of a set by simply tropicalizing all inequalities
defining $S$, the difference will have empty interior. 
Secondly, if $\bigcap \{ \trop_r(f_i) \leq 0\}$ is the closure of its interior, 
then it agrees with $\Trop_r(S)$ since $\Trop_r(S)$ is closed.
Also see Corollary 16 of~\cite{AGS}.

\begin{proof}
The last inclusion follows from the Fundamental Theorem \ref{thm:fund}. 
The first inclusion follows from the last one by taking interiors. 
For the middle inclusion, we first note that 
\begin{align*}
\left(\bigcap \{ \trop_r(f_i) \leq 0 \} \right) \setminus \left(\bigcap( \{ \trop_r(f_i) < 0 \}) \right) \subset 
\bigcup \{ 0 \in \trop_r(f_i) \}.
\end{align*}
The last set is contained in an algebraic subset of $\R^n$, hence has empty interior. 
We conclude that 
\begin{align*}
\interior {\bigcap\{ \trop_r(f_i) \leq 0 \} } \setminus \overline{\bigcap \{ \trop_r(f_i) < 0 \} } = \emptyset
\end{align*}
that is, $\interior { \bigcap\{ \trop_r(f_i) \leq 0 \} } \subset \overline{ \bigcap\{ \trop_r(f_i) < 0 \} }$. 
The last set is however contained in $\Trop_r(S)$ by the facts that
$\trop_r(f)(|z|^\sgn) > 0$ implies $f(z) > 0$ and that $\Trop_r(S)$ is
closed since $S$ is closed. 
\end{proof}

\begin{example}
\label{ex:intersection}
Let  $f(x,y) = 1 - (x-y)^2$, $g_c(x,y) = 2x^2+2x-xy+cy^2$.   
For all values of $c$ with $|c|^\sgn =
-1$, the tropicalization
 $\Trop_r(\{g_c \geq 0\})$ is the same, but the tropicalization of the intersection $\{f \geq 0\} \cap \{g_c \geq 0\}$
depends on the choice of $c$.  The cases of $c < -1$ with $|c|^\sgn =
-1$ are depicted in
Figure~\ref{fig:trop_intersection}.  For $-1 \leq c < 0$  with $|c|^\sgn =
-1$, the
tropicalization of the intersection contains the two infinite rays as
$a \rightarrow \infty$ in the figure.
  (The dotted curves together with
  the boundary of the regions form the set $\{(X,Y) : X^2 + X - XY -
  Y^2 \ni 0\}$.)
 In any case, difference  $\Trop_r(\{f \geq 0\}) \cap \Trop_r(\{g_c \geq
0\}) \setminus \Trop_r(\{f \geq 0\} \cap \{g_c \geq 0\})$ is a set
with empty interior. 
\qed
\end{example}

\begin{figure}
\begin{tabular}{ccc}
\begin{minipage}{0.3\textwidth}
\includegraphics[width=\linewidth]{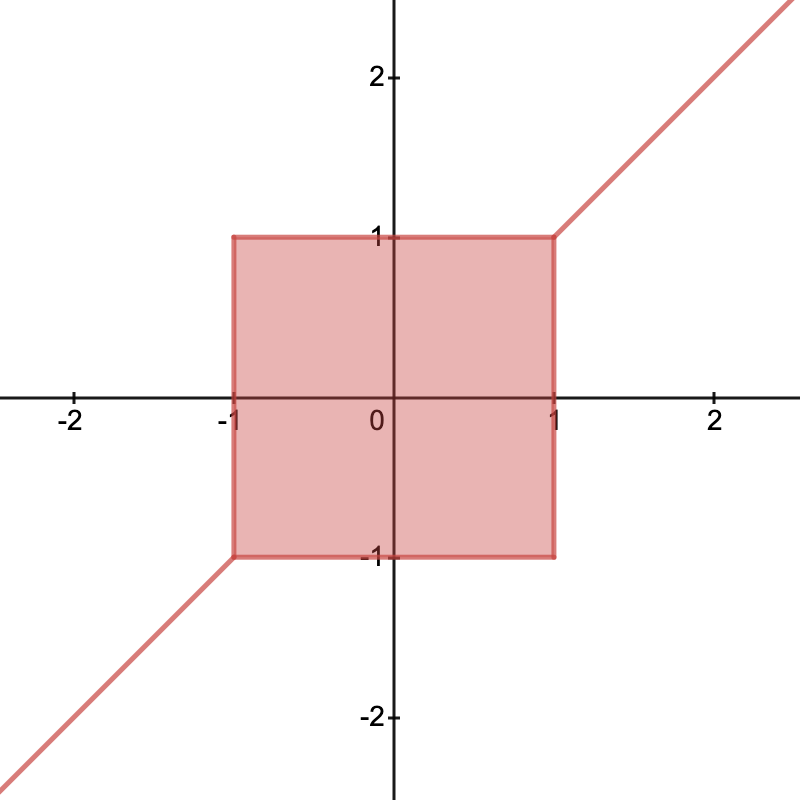}
\end{minipage} &
\begin{minipage}{0.3\textwidth}
\includegraphics[width=\linewidth]{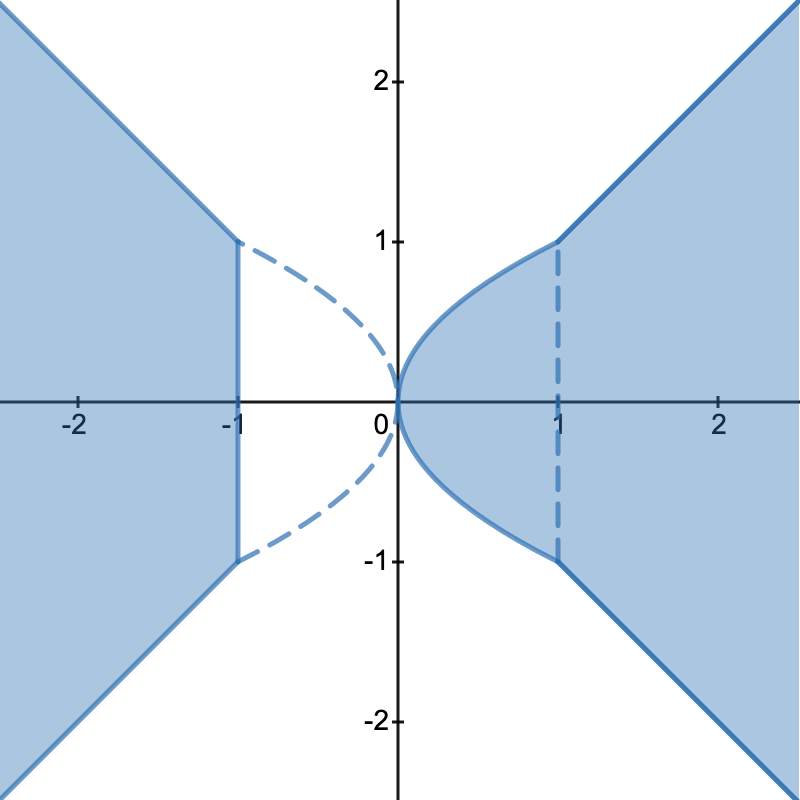}
\end{minipage} &
\begin{minipage}{0.3\textwidth}
\includegraphics[width=\linewidth]{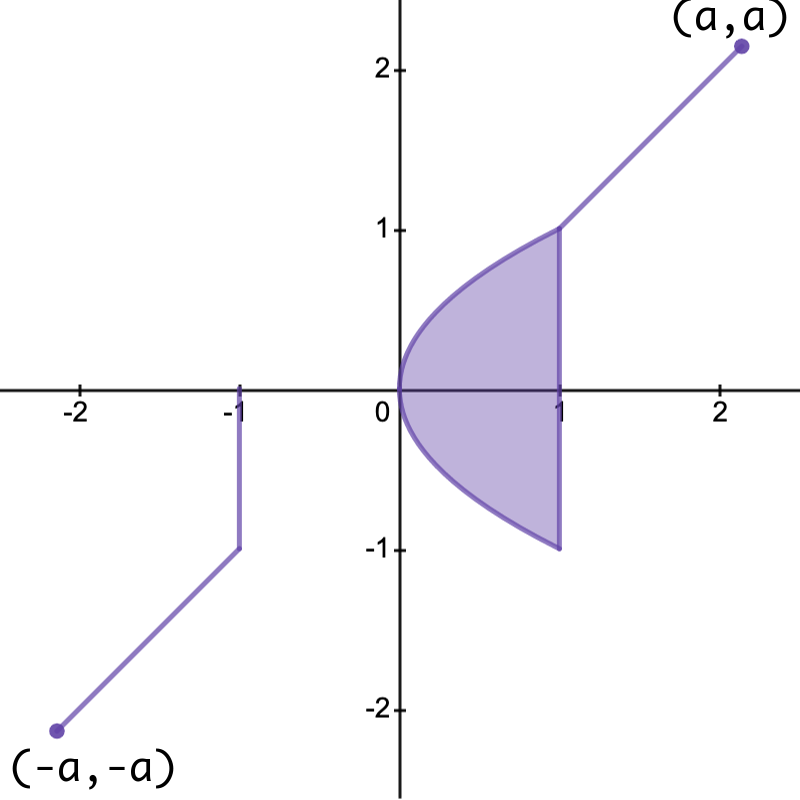}
\end{minipage}
\end{tabular}
\caption{The left and middle figures show the tropicalizations of the
  sets $\{1 - (x-y)^2 \geq 0\}$ and $\{2x^2+2x-xy+cy^2 \geq 0\}$
  respectively, where $|c|^\sgn = -1$.  
The figure on the right shows
  the tropicalization of the intersection where $|c|^\sgn = -1$, $c < -1$ and \mbox{$a =
    -\frac{1}{|c+1|^\sgn}$}.  See Example~\ref{ex:intersection}.} 
\label{fig:trop_intersection}
\end{figure}

\subsection{Limit Theorem}

 The following is
the semialgebraic and real analogue of Payne's theorem in~\cite{p}.

For any finite sequence $\scrF=(f_1,\dots,f_m)$ in $K[x_1,\dots,x_n]$,
let $\Trop_{r,\scrF}(S)\subset\R^m$ be the image of the map
\begin{align*}
\trop_{r,\scrF}\colon \Sanpm \to\R^m,\quad x\mapsto
\trop_{r,\scrF}(x):=(|f_1|_x^\sgn,\dots,|f_m|_x^\sgn).
\end{align*}

Given a second such sequence $\scrF'=(f'_1,\dots,f'_r)$ and a map
$\tau \colon \{1,\dots,m\}\to\{1,\dots,r\}$ satisfying $f'_{\tau(i)} = f_{i}$, the corresponding projection
$\R^r\to\R^m, e_{\tau(i)} \mapsto e_i$ induces a map $\Trop_{r,\scrF'}(S)\to\Trop_{r,\scrF}(S)$.
Passing to the limit over all such finite sequences (with respect to
maps between the indexing sets), we get the following analogue
of Payne's theorem \cite[Theorem 1.1]{p}:

\begin{thm} \Label{signed limit}
The canonical map
$$\Sanpm\ \to\ \varprojlim_\scrF\Trop_{r,\scrF}(S),\quad x\ \mapsto\
\trop_{r,\scrF}(x)$$
is a homeomorphism, where the limit is taken over the finite
families $\scrF$ in $K[x_1,\dots,x_n]$ as above.
\end{thm}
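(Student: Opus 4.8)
The plan is to realize both $\Sanpm$ and the inverse limit as subspaces of the product $\prod_{f\in A}\R$ and to show they are the same subset with the same topology. First I would replace the index category: since $A$ is a quotient of $K[x_1,\dots,x_n]$ and $\trop_{r,\scrF}$ depends only on the images of the $f_i$ in $A$, the finite subsets of $A$ form a cofinal system, so the limit is unchanged if $\scrF$ ranges over finite subsets of $A$. With this indexing the transition maps are restrictions of coordinate projections, hence a compatible family is determined by its singleton components $z_f:=z_{(f)}$, and the limit is canonically identified --- as a topological space, carrying the subspace topology from $\prod_{f\in A}\R$ --- with
\[
L\ :=\ \Bigl\{(z_f)_{f\in A}\in\prod_{f\in A}\R\ :\ (z_f)_{f\in\scrF}\in\Trop_{r,\scrF}(S)\ \text{ for every finite }\scrF\subset A\Bigr\}.
\]

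On the other side, recall from the proof of Lemma~\ref{trop proper} that the map $\Xanpm=\cM_r(A)\to\prod_{f\in A}\R$, $x\mapsto(|f|^\sgn_x)_{f\in A}$, is a homeomorphism onto its image; restricting to the subspace $\Sanpm\subset\Xanpm$ it becomes a homeomorphism of $\Sanpm$ onto its image in $\prod_{f\in A}\R$, and that image is plainly contained in $L$. So the canonical map $\Sanpm\to L$ is a continuous injection, homeomorphic onto its image, and it remains only to prove surjectivity onto $L$. Given $(z_f)_{f\in A}\in L$, put $|f|^\sgn:=z_f$. Testing the defining condition of $L$ on the finite families $(c)$ for $c\in K$, on $(f,g,fg)$, and on $(f,g,f+g)$, and using that every point of $\Trop_{r,\scrF}(S)$ automatically satisfies the relations of Definition~\ref{dfn signed seminorm} in those coordinates (because every signed seminorm does), one reads off that $(z_f)_f$ is a signed $K$-seminorm on $A$, i.e.\ a point $x\in\cM_r(A)=\Xanpm$.

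The remaining, and only substantive, point is that $x\in\Sanpm$, i.e.\ $\psi(x)\in\tilde S$. Fix a semialgebraic description $S=\bigcup_{i=1}^m\{\xi:\sgn f_{ij}(\xi)=\epsilon_{ij},\ j=1,\dots,r\}$, so that $\tilde S=\bigcup_i\{y\in X_r:\sgn_y(f_{ij})=\epsilon_{ij}\ \forall j\}$. Apply the defining condition of $L$ to the single finite family $\scrF_0$ consisting of all the $f_{ij}$: there is $x'\in\Sanpm$ with $|f_{ij}|^\sgn_{x'}=z_{f_{ij}}$ for all $i,j$. Since $\psi(x')\in\tilde S$, there is an index $i_0$ with $\sgn_{\psi(x')}(f_{i_0j})=\epsilon_{i_0j}$ for all $j$. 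Because for any $h\in A$ and any point $y$ of $\Xanpm$ one has $\sgn_{\psi(y)}(h)=\sgn(|h|^\sgn_y)$ (the sign of a real number), this forces $\sgn(z_{f_{i_0j}})=\epsilon_{i_0j}$, hence $\sgn_{\psi(x)}(f_{i_0j})=\sgn(|f_{i_0j}|^\sgn_x)=\sgn(z_{f_{i_0j}})=\epsilon_{i_0j}$ for all $j$, so $\psi(x)\in\tilde S$ and $x\in\Sanpm$. This proves $\Sanpm\to L$ is bijective; since $\Sanpm$ is homeomorphic, via $x\mapsto(|f|^\sgn_x)_f$, to its image in $\prod_{f\in A}\R$ with the subspace topology and that image is $L$ with exactly that topology, the canonical map is a homeomorphism. (If $S=\emptyset$ both sides are empty.)

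I expect the last step --- transporting membership in the constructible set $\tilde S$ across the limit --- to be the main obstacle. It works precisely because $\tilde S$ is cut out by finitely many sign conditions on elements of $A$, and because $\Trop_{r,\scrF}(S)$ is the image of $\Sanpm$, so it records exactly those sign conditions as soon as $\scrF$ contains the relevant finitely many polynomials; the signed-seminorm axioms are finitary in the same way, each constraining at most three elements at a time. Everything else is the standard Payne-style bookkeeping.
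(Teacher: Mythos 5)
Your proposal is correct and follows essentially the same route as the paper's proof: you recover the signed seminorm from the singleton coordinates of a compatible family, verify the axioms of Definition~\ref{dfn signed seminorm} by realizing each relevant finite family by an actual point of $\Sanpm$, establish membership in $\Sanpm$ by testing the finitely many sign conditions of a semialgebraic description of $S$ against a realizing point, and match the topologies; the packaging via the embedding of $\cM_r(A)$ into $\prod_{f\in A}\R$ and the set $L$ is only a cosmetic reformulation of the paper's argument.
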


\begin{proof}
The map is injective, since if $x\ne y$, there exists a polynomial
$f$ with $\vert f \vert^\sgn_x \neq \vert f \vert^\sgn_y$. Consequently,
the images of $x$ and $y$ disagree in $\Trop_{r,\scrF}(S)$ for any $\scrF$ containing $f$.

To show that the map is surjective, let $z \in \varprojlim_\scrF\Trop_{r,\scrF}(S)$; 
it assigns an element $z_\scrF \in \Trop_{r,\scrF}(S)$ to every finite family $\scrF$ of polynomials, satisfying compatibility conditions.  
Define a signed seminorm $\abs_x^\sgn$
as follows. For $f\in K[x_1,\dots,x_n]$, let
$$|f|_x^\sgn\>:=\>(z_\scrF)_1,$$
the first coordinate of $z_\scrF$, where $\scrF=(f,f_2,\dots)$ is any family whose first component is $f$. 
This is well-defined since $z_\scrF$ are compatible with maps between the indexing sets as described above.

To see that $\abs_x^\sgn$ is a seminorm,
we need to check the following, for all polynomials $f$ and $g$:
\begin{enumerate}
\item
$\vert f \vert^\sgn_x = \sgn(f) \vert f \vert_K$ if $f \in K$,
\item
$\vert f \cdot g \vert^\sgn_x = \vert f \vert^\sgn_x \cdot \vert g \vert^\sgn_x$,
\item
$\min(\vert f \vert^\sgn_x, \vert g \vert^\sgn_x)\ \le\ \vert f + g \vert^\sgn_x
\ \le\ \max(\vert f \vert^\sgn_x, \vert g \vert^\sgn_x)$.
\end{enumerate}
Take the tuple $((z_\scrF)_1,\dots,(z_\scrF)_4)$ corresponding to the
family $\scrF=(f,g, f+g, fg)$, and let $y\in\Sanpm$ be such that
$\trop_{r,\scrF}(y)=((z_\scrF)_1,\dots,(z_\scrF)_4)$.
Then $\vert f \vert^\sgn_x = \vert f \vert^\sgn_y$,
$\vert g \vert^\sgn_x = \vert g \vert^\sgn_y$,
$\vert f + g \vert^\sgn_x = \vert f +g \vert^\sgn_y$ and
$\vert fg \vert^\sgn_x = \vert f \cdot g \vert^\sgn_y$,
and the axioms $(1)$, $(2)$ and $(3)$ hold for $\abs_x^\sgn$ because
they hold for $\abs_y^\sgn$.

It remains to show that $\abs_x^\sgn \in \Sanpm$. Let us write $S = S_1 \cup \cdots S_n$ as a union of basic semialgebraic subsets. 
This means that each $S_i$ is defined by finitely many inequalities 
$f_{ij} \geq 0$ and $g_{ik} > 0$ where $j=1,\dots, M_i, k=1,\dots,
N_i$.  Let $\scrF$ be the family consisting all $f_{ij}$ and $g_{ik}$ and consider $z_\scrF$.  Let 
$y = \abs_y^\sgn \in S_r^{\an}$ such that $\trop_{r, \scrF}(y) = z_\scrF$.
Then $\abs_y^\sgn \in (S_\ell)^{\an}_r$ for some $\ell$, and 
it satisfies all the inequalities $\vert f_{\ell j} \vert_y^\sgn \geq 0$ and $\vert g_{\ell j} \vert_y^\sgn > 0$. 
Since we have $\vert f_{\ell j} \vert_x^{\sgn} = (z_{\scrF})_{\ell j} = \vert f_{\ell j} \vert_y^\sgn$ 
and similarly for the $g_{\ell j}$, we see that $\abs_x^\sgn \in (S_\ell)_r^{\an} \subseteq \Sanpm$. 

Finally the map is a homeomorphism because the topology on the
left is defined as the coarsest topology such that
$x \mapsto \vert f \vert^\sgn_x$ is continuous for all
$f$, while on the right the topology is defined such that
all projection maps, i.e.~all maps $\varprojlim_\scrF\Trop_{r,\scrF}(S)
\to\Trop_{r,\scrF'}(S)$ to a particular $\scrF'$ are continuous.
These conditions are  equivalent.
\end{proof}

\section*{Acknowledgements}
This work started when CS and JY were in residence at
the Mathematical Sciences Research Institute in Berkeley, California,
during the Fall 2017 semester on Geometric and Topological Combinatorics, supported by the National Science
Foundation under Grant No.\ DMS-1440140.
PJ and JY are grateful to the Institut Mittag-Leffler in Djursholm,
Sweden, for hospitality during the Spring 2018 semester.
JY thanks the Institute for Computational and Experimental Research in
Mathematics in Providence, Rhode Island, for support during the Fall
2018 semester.  
We thank the referees for helpful comments.

\bibliographystyle{amsalpha}
\providecommand{\bysame}{\leavevmode\hbox to3em{\hrulefill}\thinspace}
\providecommand{\MR}{\relax\ifhmode\unskip\space\fi MR }
\providecommand{\MRhref}[2]{%
  \href{http://www.ams.org/mathscinet-getitem?mr=#1}{#2}
}
\providecommand{\href}[2]{#2}

\end{document}